\newtheorem{thm}{Theorem}[section]
\newtheorem{prop}[thm]{Proposition}
\newtheorem{lem}[thm]{Lemma}
\newtheorem{cor}[thm]{Corollary}
\numberwithin{equation}{section}
\theoremstyle{definition}
\newtheorem{definition}[thm]{Definition}
\newtheorem{remark}[thm]{Remark}
\newtheorem{ex}[thm]{Example}
\newcommand{\Db}{{\rm D}^{\rm b}}
\newcommand{\Aut}{{\rm Aut}}
\newcommand{\Hom}{{\rm Hom}}
\newcommand{\Ext}{{\rm Ext}}
\newcommand{\cal}{\mathcal}
\newcommand{\ka}{{\cal A}}
\newcommand{\kb}{{\cal B}}
\newcommand{\kc}{{\cal C}}
\newcommand{\kh}{{\cal H}}
\newcommand{\ko}{{\cal O}}
\newcommand{\kt}{{\cal T}}
\newcommand{\ZZ}{\mathbb{Z}}
\newcommand{\RR}{\mathbb{R}}
\newcommand{\CC}{\mathbb{C}}
\newcommand{\HH}{{H\!H}}
\newcommand{\PP}{\mathbb{P}}
\newcommand{\OO}{{\rm O}}
\renewcommand{\to}{\xymatrix@1@=15pt{\ar[r]&}}
\renewcommand{\rightarrow}{\xymatrix@1@=15pt{\ar[r]&}}
\renewcommand{\mapsto}{\xymatrix@1@=15pt{\ar@{|->}[r]&}}
\renewcommand{\twoheadrightarrow}{\xymatrix@1@=18pt{\ar@{->>}[r]&}}
\renewcommand{\hookrightarrow}{\xymatrix@1@=15pt{\ar@{^(->}[r]&}}
\newcommand{\hook}{\xymatrix@1@=15pt{\ar@{^(->}[r]&}}
\newcommand{\congpf}{\xymatrix@1@=15pt{\ar[r]^-\sim&}}
\renewcommand{\cong}{\simeq}
\begin{document}

\title[]{Hochschild cohomology versus the Jacobian ring,\\
 and the  Torelli theorem for cubic fourfolds}

\author[D.\ Huybrechts and J.\ V.\ Rennemo]{{D.\ Huybrechts and J.\ V.\ Rennemo}}

\address{Mathematisches Institut,
Universit{\"a}t Bonn, Endenicher Allee 60, 53115 Bonn, Germany}
\email{huybrech@math.uni-bonn.de}

\address{Mathematical Institute, University of Oxford,
Woodstock Road, Oxford OX2 6GG, United Kingdom \&
All Souls College,  Oxford, OX1 4AL, United Kingdom }
\email{jvrennemo@gmail.com}
\begin{abstract} \noindent
The Jacobian ring $J(X)$ of a smooth hypersurface $X\subset\PP^{n+1}$ determines the isomorphism type
of $X$. This has been used by Donagi and others to prove the generic global Torelli theorem for hypersurfaces
in many cases.
However, in Voisin's  original proof (and, in fact, in all other proofs) of the global Torelli theorem for smooth cubic fourfolds 
$X\subset\PP^5$ the Jacobian ring does
not intervene. In this paper we present a proof of the global Torelli theorem for cubic fourfolds that relies
on the Jacobian ring and the (derived) global Torelli theorem for K3 surfaces. It emphasizes, once again, the close
and still mysterious relation between K3 surfaces and smooth cubic fourfolds.

More generally, for  a variant of Hochschild cohomology  $\HH^*(\ka_X,(1))$
of  Kuznetsov's cate\-gory $\ka_X$ (together with the degree shift functor $(1)$)
associated with an arbitrary smooth hypersurface $X\subset\PP^{n+1}$
of degree $d\leq n+2$ we construct a graded ring homomorphism $J(X)\twoheadrightarrow\HH^*(\ka_X,(1))$,
which is shown to be bijective whenever $\ka_X$ is a Calabi--Yau category.
 \vspace{-2mm}
\end{abstract}

\maketitle

{\let\thefootnote\relax\footnotetext{The first author was supported by the SFB/TR 45 `Periods,
Moduli Spaces and Arithmetic of Algebraic Varieties' of the DFG
(German Research Foundation).}
\marginpar{}
}

The derived category $\Db(X)$ of a smooth hypersurface $X\subset\PP^{n+1}$ of degree $d\ne n+2$
determines the hypersurface $X$ uniquely. However, a certain full triangulated subcategory
$\ka_X\subset\Db(X)$ introduced by Kuznetsov in \cite{Kuz0} turns out to be a subtler and more interesting derived invariant
of $X$. 
The case of cubic fourfolds $X\subset\PP^5$ 
has been studied intensively, cf.\ \cite{AT,HuyCubic,Kuz0,MacSte}. As observed by Kuznetsov, in this case
$\ka_X$ behaves in many respects like the derived category $\Db(S)$ of a K3 surface $S$. In
particular, its Hochschild cohomology $\HH^*(\ka_X)$ is known to be isomorphic
to the Hochschild cohomology of a K3 surface.

The aim of the paper is twofold. We introduce  a version of  Hochschild
cohomology of $\ka_X$  for smooth hypersurfaces $X\subset\PP^{n+1}$, denoted $\HH^*(\ka_X,(1))$, and
explain its relation to the Jacobian ring $J(X)$. The main result here is the following (cf.\ Corollary \ref{cor:Thm1}):

\begin{thm}\label{thm:main1} For any smooth hypersurface $X\subset\PP^{n+1}$ of degree $d\leq (n+2)/2$ there exists a
natural surjective homomorphism of graded rings $$\pi\colon J(X)\twoheadrightarrow\HH^*(\ka_X,(1)),$$
which is an isomorphism if $n+2$ is divisible by $d<n+2$.
\end{thm}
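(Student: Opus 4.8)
The plan is to construct the map $\pi$ directly from the two descriptions of the objects involved, and then to identify its kernel using the Calabi–Yau hypothesis. First I would recall Kuznetsov's presentation of $\ka_X\subset\Db(X)$ as the right orthogonal to the exceptional collection $\ko_X,\dots,\ko_X(n+1-d)$, together with the degree-shift autoequivalence $(1)$ on $\ka_X$. On the other side, I would use the standard description of the Jacobian ring $J(X)=\bigoplus_q J(X)_q$ as $S/(\partial_0 f,\dots,\partial_{n+1}f)$, where $S=\CC[x_0,\dots,x_{n+1}]$ and $f$ is the defining equation, and recall that its graded pieces compute the (primitive) Hodge cohomology of $X$ via Griffiths' theory. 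The variant Hochschild cohomology $\HH^*(\ka_X,(1))$ should be set up so that $\HH^k(\ka_X,(1))=\bigoplus_{i}\Ext^k_{\ka_X\times\ka_X}(\ko_\Delta,\ko_\Delta(i)[\ ?\ ])$, i.e.\ a bigraded object where the extra grading records the number of twists, and the ring structure is the Yoneda/composition product twisted by $(1)$.

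The key steps, in order: (1) Show that the natural Fourier–Mukai kernel for $\ka_X$ (the projector onto $\ka_X$, obtained by mutating the diagonal past the exceptional blocks) fits into a semiorthogonal-type resolution whose terms are tensor powers of $\ko(1)$ twisted against the Koszul complex of $df$. This is where the Jacobian ideal enters geometrically: the relations $\partial_i f$ appear as the differentials in a Koszul-type complex on $X\times X$ resolving the structure sheaf of the diagonal in a form adapted to $\ka_X$. (2) Compute $\HH^*(\ka_X,(1))$ from this resolution via a spectral sequence; the $E_1$-page will be built out of symmetric powers of the conormal data, and the surjection $\pi\colon J(X)\to\HH^*(\ka_X,(1))$ will be the edge map sending a polynomial representative in $S$ to the corresponding Hochschild class. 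Surjectivity should be essentially formal from the construction — every generator of $\HH^*(\ka_X,(1))$ lifts to a monomial. (3) Check that $\pi$ is a ring homomorphism: the product on $J(X)$ is polynomial multiplication, and on the Hochschild side it is the cup product, and the resolution in (1) is multiplicative (a DG-algebra resolution of the diagonal), so compatibility follows from naturality of the Koszul construction. (4) For injectivity when $d\mid n+2$: here $\ka_X$ is a Calabi–Yau category of dimension $m=(n+2)(d-2)/d$, so Serre duality on $\ka_X$ pairs $\HH^k$ with a shifted copy of itself; combined with the Griffiths residue description this forces the graded dimensions of $\HH^*(\ka_X,(1))$ to match those of $J(X)$ exactly (both being controlled by the same Poincaré polynomial $\bigl(\frac{t^d-1}{t-1}\bigr)^{n+2}$), so a surjection between finite-dimensional graded vector spaces of equal dimension in each degree must be an isomorphism.

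The main obstacle I expect is step (1)–(2): producing an explicit enough resolution of the $\ka_X$-projector on $X\times X$ so that the Jacobian ideal literally appears, and then controlling the spectral sequence well enough to (a) see that $\pi$ is surjective and (b) identify its source with all of $J(X)$ rather than some quotient or subspace. The twist functor $(1)$ must be tracked carefully through every mutation, since it is precisely the presence of this extra grading that makes $J(X)$ — rather than the much smaller ordinary $\HH^*(\ka_X)$ — the right target. A secondary subtlety is the Calabi–Yau input in step (4): one must make sure the abstract Serre-duality pairing on $\HH^*(\ka_X,(1))$ is compatible with the Macaulay duality pairing on $J(X)$ under $\pi$, so that matching top-degree pieces propagates down to matching dimensions in every degree; this is what upgrades the surjection to a bijection and recovers, in the cubic fourfold case, enough of $J(X)$ to feed into the Torelli argument.
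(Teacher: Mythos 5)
Your overall architecture (map linear forms to degree-one classes, kill the Jacobian ideal, use Calabi--Yau/Gorenstein duality for injectivity) matches the paper's, but two of your key steps have genuine gaps. First, the mechanism by which the relations $\partial_i f$ die. Your ``Koszul complex of $df$'' picture is essentially the matrix-factorization chain-rule heuristic ($\partial_i f=\partial_i\alpha\circ\beta+\alpha\circ\partial_i\beta$ gives a homotopy), and the paper explicitly warns that this is \emph{not} sufficient: the natural map $\Hom(P_0,P_\ell)\to{\rm Fun}({\rm id},(\ell))$ from kernel morphisms to natural transformations is not injective, so exhibiting a homotopy at the level of functors does not kill the class in $\Hom(P_0,P_{d-1})$, which is where it must vanish. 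The paper instead proves a kernel-level statement: the obstruction class $\alpha\colon\ko_\Delta(d)\to\ko_\Delta[2]$ coincides with the composition $\beta$ through $\Delta_*\kt_X[1]$ of the boundary map of the normal bundle sequence with the (dual) Atiyah class, so that $\alpha(-1)\circ\Delta_*\gamma=0$ because two consecutive maps in the normal bundle triangle compose to zero; one then factors $\Delta_*\gamma$ through $\varphi^*\ko_{\Delta_\PP}(d-1)$ and kills the one surviving Koszul term by a separate Serre-duality vanishing. None of this is visible in your outline, and it is the technical heart of the surjection. Relatedly, surjectivity is not ``essentially formal'': since $\HH^*(\ka_X,(1))$ is defined as the subalgebra of $L(X)=\bigoplus\Hom(P_0,P_\ell)$ generated in degree one, one must prove that $H^0(X,\ko_X(1))\twoheadrightarrow L_1(X)$, which is quick for $d\geq4$ but requires a long explicit computation for $d=3$ (the paper only carries it out for $n=4$).

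Second, the injectivity. Your claim that Serre duality forces the graded dimensions of $\HH^*(\ka_X,(1))$ to equal those of $J(X)$ is unsupported and, as stated for the ambient object, false: the paper notes that $\dim J_3(X)=20<22=\dim L_3(X)$ for a cubic fourfold, so there is no a priori dimension match to exploit, and computing the Hilbert function of the subalgebra generated in degree one is equivalent to the theorem itself. What you correctly flag as a ``secondary subtlety'' --- compatibility of the two Gorenstein pairings --- is in fact the entire argument: both rings have socle in degree $\sigma=(n+2)(d-2)$ with nondegenerate multiplication pairings, so injectivity reduces to $\pi_\sigma\neq0$. But establishing $\pi_\sigma\neq0$ is a nontrivial step your proposal omits: the paper identifies $J_d(X)\cong H^1(X,\kt_X)\to\HH^2(\ka_X)$, transports the iterated product to Hochschild homology via the $\HH^*$-module structure and the comparison $\HH_*(\ka_X)\cong\HH_*(X)$ in the relevant degrees, and then invokes Griffiths' theorem that the $(\sigma/d-1)$-fold cup product with Kodaira--Spencer classes acts nontrivially on $H\Omega^{2-\sigma/d}(X)$ (with a separate isotropic-subspace argument when $\sigma/d=2$). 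Without this nonvanishing input, a surjection of Gorenstein rings could still kill the socle, and the duality argument does not start.
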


Note that the numerical assumption $d\mid (n+2)$ is exactly the one that according to Kuznetsov
\cite{KuzCY} ensures that $\ka_X$ is a
Calabi--Yau category. The theorem can be viewed as a graded version of Dyckerhoff's description of the Hochschild
cohomology of the category of (ungraded) matrix factorizations \cite{Dyck}.

As a consequence of Theorem \ref{thm:main1}, we provide a new proof of the global Torelli theorem for cubic fourfolds based on the (derived) global Torelli theorem for K3 surfaces.

\begin{thm}\label{thm:GT}
Two smooth complex cubic fourfolds $X,X'\subset\PP^5$  are isomorphic if and only if there exists a 
Hodge isometry $H^4(X,\ZZ)_{\rm pr}\cong H^4(X',\ZZ)_{\rm pr}$.
\end{thm}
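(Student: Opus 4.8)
The plan is to deduce the global Torelli theorem for cubic fourfolds from Theorem \ref{thm:main1} together with the derived global Torelli theorem for K3 surfaces, via the Kuznetsov category $\ka_X$. The strategy breaks into three linked steps: first, show that a Hodge isometry of primitive cohomology yields an exact equivalence $\ka_X\cong\ka_{X'}$; second, upgrade such an equivalence to one that is compatible with the degree-shift functor $(1)$, hence induces an isomorphism of the enriched Hochschild rings $\HH^*(\ka_X,(1))\cong\HH^*(\ka_{X'},(1))$; and third, invoke the isomorphism $J(X)\cong\HH^*(\ka_X,(1))$ of Theorem \ref{thm:main1} (valid since $3\mid 6$) together with the classical fact that the graded Jacobian ring $J(X)$ determines the hypersurface $X$ up to projective isomorphism.

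\begin{proof}[Strategy of proof of Theorem \ref{thm:GT}]
One implication is clear: an isomorphism $X\cong X'$ induces a Hodge isometry on primitive cohomology. For the converse, suppose we are given a Hodge isometry $\varphi\colon H^4(X,\ZZ)_{\rm pr}\congpf H^4(X',\ZZ)_{\rm pr}$. The first step is to produce from $\varphi$ an exact linear equivalence $\Phi\colon\ka_X\congpf\ka_{X'}$. Here one uses that $\ka_X$ carries a weight-two Hodge structure $\widetilde H(\ka_X,\ZZ)$ on its topological/Mukai lattice, that this Hodge structure is determined by $H^4(X,\ZZ)_{\rm pr}$ (the two are related by an explicit Tate twist and the addition of a hyperbolic plane), and that for the K3-type Hodge structures arising from cubic fourfolds a Hodge isometry of Mukai lattices is realised by a Fourier--Mukai type equivalence. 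This last point is where the derived global Torelli theorem for K3 surfaces enters: when $X$ (equivalently $X'$) is special so that $\ka_X\cong\Db(S)$, $\ka_{X'}\cong\Db(S')$ for K3 surfaces $S,S'$ (possibly twisted), the statement is precisely the derived Torelli theorem of Orlov and its twisted and Hodge-theoretic refinements; the general case is reduced to this by deformation/density of special cubics, using that the relevant moduli spaces and period domains are connected and that the category $\ka_X$ deforms in families.

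The second and most delicate step is to arrange that $\Phi$ intertwines the degree-shift autoequivalences, i.e.\ that $\Phi\comp(1)_X\cong(1)_{X'}\comp\Phi$ (possibly after composing $\Phi$ with a power of the shift, which does not affect the conclusion). The functor $(1)$ is intrinsic to $\ka_X$ up to the ambiguity of the Serre functor: for cubic fourfolds the Serre functor of $\ka_X$ is $[2]$ and $(3)\cong[2]$, so $(1)$ is a canonical cube root of the shift $[2]$ in the group of autoequivalences, acting on $\widetilde H(\ka_X,\ZZ)$ by a fixed finite-order isometry. The task is therefore to show that $\Phi$ can be chosen so that its action on cohomology commutes with this fixed isometry; since the action of $(1)$ on the Mukai lattice is determined by the Hodge structure and is preserved by any Hodge isometry, one shows that the equivalence realising $\varphi$ automatically has this property, or can be corrected by an autoequivalence of $\ka_{X'}$ acting trivially on cohomology (a spherical-twist-type or shift ambiguity) to make it so. This yields a ring isomorphism $\HH^*(\ka_X,(1))\cong\HH^*(\ka_{X'},(1))$ of the enriched Hochschild cohomology.

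Combining with Theorem \ref{thm:main1}, which for $n=4$, $d=3$ (so $d\mid n+2$) gives graded ring isomorphisms $J(X)\cong\HH^*(\ka_X,(1))$ and $J(X')\cong\HH^*(\ka_{X'},(1))$, we obtain a graded ring isomorphism $J(X)\cong J(X')$. By the classical theorem of Mather--Yau/Donagi (the Jacobian ring, as a graded ring, determines a smooth hypersurface of degree $d\geq 3$ in $\PP^{n+1}$ up to projective equivalence when $(n,d)\neq(1,3)$, and in particular for cubic fourfolds), this forces $X\cong X'$. The main obstacle is the second step: controlling the interaction of the abstract equivalence $\Phi$ with the degree-shift functor $(1)$ at the level of the enriched Hochschild invariant, and checking that the ambiguity in $\Phi$ (shifts, Serre twists, kernels supported on the diagonal) does not obstruct compatibility. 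Care is also needed to ensure that the isomorphism $J(X)\cong J(X')$ produced this way is one of \emph{graded} rings with the correct grading normalisation, since only the graded ring — not the underlying ungraded ring, nor any single graded piece — recovers $X$.
\end{proof}
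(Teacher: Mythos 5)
Your proposal follows the same overall architecture as the paper --- Hodge isometry, then an equivalence of Kuznetsov categories compatible with the degree shift, then an isomorphism $\HH^*(\ka_X,(1))\cong\HH^*(\ka_{X'},(1))$, then $J(X)\cong J(X')$ and $X\cong X'$ via Mather--Yau --- but it has genuine gaps at exactly the two points where the paper has to work hardest. The first concerns the reduction to special cubics. You propose to produce the equivalence $\ka_X\cong\ka_{X'}$ for \emph{arbitrary} $X,X'$ by ``deformation/density of special cubics'' and the fact that ``$\ka_X$ deforms in families''. There is no known way to deform an equivalence $\ka_{X_t}\cong\ka_{X'_t}$ from a dense set of special fibres to a general fibre, and the paper never constructs an equivalence for general $X$. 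What it does instead is: extend $\varphi$ to Hodge isometries $\varphi_t$ over the whole (identified) deformation spaces via parallel transport, lift these to Fourier--Mukai equivalences only at the dense set $D'$ of parameters where $\ka_{X_t}\cong\Db(S_t,\alpha_t)$ for a twisted K3 surface without spherical objects (the twisted Addington--Thomas theorem plus the derived Torelli theorem for K3 surfaces), conclude $X_t\cong X'_t$ for $t\in D'$, and then use separatedness of the moduli space of cubics to pass to the limit. The density argument is applied to the \emph{conclusion} $X_t\cong X'_t$, which is a closed condition, not to the construction of the equivalence; your order of steps would require the latter.

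The second gap is the kernel-level compatibility. The ring $\HH^*(\ka_X,(1))$ is defined via $\Hom(P_0,P_\ell)$ for explicit Fourier--Mukai kernels in $\ka_X(-2)\boxtimes\ka_X$, and a functor isomorphism $\Phi\circ(1)\cong(1)'\circ\Phi$ does not by itself transport this ring: one needs the isomorphism of kernels $P_1\circ P\cong P\circ P_1'$ as in Corollary \ref{cor:Jaciso}. The paper obtains this from the uniqueness of Fourier--Mukai kernels for twisted K3 surfaces (Canonaco--Stellari), which is another reason the argument is forced through the locus where $\ka_X\cong\Db(S,\alpha)$ with no spherical objects; that hypothesis also pins down the kernel of $\Aut_s(\ka_X)\to\Aut(\widetilde H(\ka_X,\ZZ))$ as $\ZZ\cdot[2]$, which is what upgrades cohomological commutation with $(1)^H$ to $\Phi\circ(1)\cong(1)'\circ\Phi$ in the first place. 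You name this ambiguity as an obstacle but do not resolve it. A smaller inaccuracy: the action of $(1)^H$ is a $3$-cycle on the $A_2$-part of $\widetilde H(\ka_X,\ZZ)$, and it is not true that every Hodge isometry commutes with it; one must choose the Nikulin extension of $\varphi$ so that its restriction to $A_2$ lies in ${\mathfrak A}_3\times\ZZ/2\ZZ\subset\OO(A_2)$, as in Proposition \ref{prop:extend}.
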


The proof presented here passes via an isomorphism of Jacobian rings $J(X)\cong J(X')$ and so is closer in
spirit to Donagi's generic Torelli theorems for hypersurfaces \cite{Don}  than
to the original by Voisin \cite{VoisinGT}. We certainly make no claim that our arguments are
any easier or more natural than the existing ones. But it is certainly interesting to see
that the result can be deduced directly from the global Torelli theorem
for K3 surfaces,  demonstrating once more the fascinating and mysterious link between
cubic fourfolds and K3 surfaces.
\smallskip

In the rest of the introduction we  provide  more background for both parts of the paper.

\subsection{Global Torelli Theorem}
The classical Torelli theorem asserts that two smooth complex projective curves $C$ and $C'$ are isomorphic
if and only if there exists a Hodge isometry $H^1(C,\ZZ)\cong H^1(C',\ZZ)$.
The Hodge structure is the usual Hodge structure of weight one  and the pairing is provided by the intersection 
product.\footnote{If the condition on the compatibility with the pairing is dropped, then the Jacobians
of $C$ and $C'$ are still isomorphic, $J(C)\cong J(C')$, as unpolarized abelian varieties and,
in particular, $[S^nC]=[S^nC']$, for $n>2g-2$, in the Grothendieck ring of varieties $K_0({\rm Var})$.}

\smallskip Due to results of Pjatecki{\u\i}-{\v{S}}apiro and {\v{S}}afarevi{\v{c}} \cite{PSS} and Burns and Rapoport \cite{BR}, a similar result holds true
for K3 surfaces. More precisely,  two K3 surfaces $S$ and $S'$ are isomorphic if and only if there exists a
Hodge isometry $H^2(S,\ZZ)\cong H^2(S',\ZZ)$. The Hodge structure is of weight two and the pairing
is again given by the intersection product.\footnote{If the intersection product is ignored, then the Hodge
conjecture still predicts the isomorphism to be induced by an algebraic class on the product, but its concrete geometric meaning
is unclear.}

\smallskip More recently, Verbitsky \cite{Verb} proved a version of the global Torelli theorem for compact hyperk\"ahler
manifolds, higher-dimensional versions of K3 surfaces, which shows that two such manifolds
$Y$ and $Y'$ are birational if and only if there exists a Hodge isometry $H^2(Y,\ZZ)\cong H^2(Y',\ZZ)$
(with respect to the Beauville--Bogomolov pairing),
which is a parallel transport operator, cf.\ \cite{HuyGT,Mark}.

\smallskip

Classically a similar question has been asked for smooth hypersurfaces.
Concretely, are two smooth hypersurfaces $X,X'\subset\PP^{n+1}$ isomorphic if and only
if there exists a Hodge isometry between their primitive middle cohomology $H^n(X,\ZZ)_{\rm pr}
\cong H^n(X',\ZZ)_{\rm pr}$? The question has been addressed and answered in most cases
by Donagi \cite{Don}. Combined with the later work by Donagi--Green \cite{DonGr} and Cox--Green
\cite{CoxGr}, his results can be stated as follows: 

\begin{thm}[Donagi, Cox, Green]
The global Torelli theorem
holds for  generic hypersurfaces $X,X'\subset\PP^{n+1}$ of degree $d$ except possibly
in the following cases:
$${\rm (i)} ~(d,n)=(3,2),~ {\rm (ii)} ~(d,n)=(4,4m), \text{  and } {\rm (iii)} ~d\mid (n+2).$$
\end{thm}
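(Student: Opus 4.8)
The plan is to follow the \emph{symmetrizer} method of Donagi, in the form refined by Donagi--Green and Cox--Green, in which the Jacobian ring is the central object. Fix homogeneous coordinates, write $S=\CC[x_0,\dots,x_{n+1}]$, let $f\in S_d$ cut out a generic $X$, put $J_f=(\partial_0 f,\dots,\partial_{n+1}f)$, and let $R=J(X)=S/J_f$ be the Jacobian ring. By Macaulay's theorem $R$ is a graded Artinian Gorenstein ring with socle in degree $\sigma:=(n+2)(d-2)$, so each multiplication pairing $R_a\times R_{\sigma-a}\longrightarrow R_\sigma\cong\CC$ is perfect. Two structural facts frame the argument. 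First, by a classical consequence of Macaulay's theorem (together with a dimension count) the graded ring $R$ determines $f$ up to the ${\rm GL}(S_1)$-action and scaling, hence determines $X$ up to projective equivalence, for all the $(d,n)$ under consideration. Second, Griffiths' residue calculus furnishes canonical isomorphisms $F^{n-p}H^n(X,\CC)_{\rm pr}/F^{n-p+1}\cong R_{b_p}$ with $b_p=(p+1)d-n-2$, under which the polarization becomes the Macaulay pairing $R_{b_p}\times R_{b_{n-p}}\longrightarrow R_\sigma\cong\CC$ (note $b_p+b_{n-p}=\sigma$) and, infinitesimally, the infinitesimal variation of Hodge structure (IVHS) becomes the collection of ring multiplications $R_d\otimes R_{b_p}\longrightarrow R_{b_{p+1}}$, using $H^1(X,T_X)\cong R_d$. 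Thus attached to a point of the family one has a polarized Hodge structure \emph{together with} the IVHS, i.e.\ the graded pieces $R_{b_p}$, the space $R_d$, and all multiplications among them; the task is to bootstrap this partial multiplication table up to the full ring $R$.

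The bootstrap is provided by the \emph{symmetrizer}. Given the known bilinear maps $\mu_{a,b}\colon R_a\otimes R_b\longrightarrow R_{a+b}$, one defines, for suitable degrees $a,b,c$, the symmetrizer $\Sigma^c_{a,b}\subset\Hom(R_a,R_{a+c})$ to consist of the maps $\varphi$ for which $\mu_{a+c,b}\circ(\varphi\otimes\id_{R_b})$ equals $\mu_{a,b+c}$ followed by some $R_b\longrightarrow R_{b+c}$; multiplication by $R_c$ visibly lands in $\Sigma^c_{a,b}$, and the \emph{symmetrizer lemma} asserts that for generic $f$, with $a,b$ in an appropriate range, this inclusion is an equality, so that $R_c$ together with its action is recovered intrinsically. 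Feeding in $c=1$ and $a,b$ equal to suitable sums of Hodge degrees $b_p$ and of $d$, one recovers $R_1=S_1$ and its multiplications onto the known pieces; then, since $R$ is generated in degree one, iterating multiplication by $S_1$ and invoking Gorenstein duality (to pass from low degrees to the complementary degrees $\sigma-k$) reconstructs every remaining graded piece and every multiplication, hence the full ring $R$, hence $X$.

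It remains to pass from this reconstruction to the global statement. Infinitesimal Torelli holds for generic $f$ (the IVHS map $R_d\longrightarrow\bigoplus_p\Hom(R_{b_p},R_{b_{p+1}})$ is then injective), so the period map $\mathcal P\colon\mathcal M\longrightarrow D/\Gamma$ on moduli is generically finite and a generic immersion; at a generic point $\phi$ of its image the tangent space $T_\phi(\im\mathcal P)$ is well defined and coincides with the IVHS subspace $\im(d\mathcal P_{[X]})$ for \emph{every} $X$ in the fibre over $\phi$, so all these $X$ carry the same IVHS, whence by the reconstruction above they are all isomorphic, and the generic fibre of $\mathcal P$ is a single point. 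Applied to generic $X,X'$ with $H^n(X,\ZZ)_{\rm pr}\cong H^n(X',\ZZ)_{\rm pr}$ as polarized Hodge structures, this yields $X\cong X'$. The exceptions are precisely where the machine stalls: for $(d,n)=(3,2)$ the Hodge structure is trivial; for $d\mid n+2$ the degree $d$ is itself one of the Hodge degrees $b_p$, which entangles the IVHS with the polarization and defeats the symmetrizer chain; and $(d,n)=(4,4m)$ is the remaining case in which the available Hodge degrees $b_p$ are too sparse for a chain of symmetrizer steps to reach $R_1$.

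The main obstacle is the symmetrizer lemma itself --- proving that for generic $f$ the symmetrizer $\Sigma^c_{a,b}$ contains nothing beyond multiplication by $R_c$. I would establish it by specialization: exhibit a highly symmetric hypersurface (a Fermat-type, or suitably chosen monomial, one) for which the multiplications $\mu_{a,b}$ are presented explicitly via Koszul-type resolutions of $J_f$, compute $\dim\Sigma^c_{a,b}$ there by an exact-sequence chase and check that it equals $\dim R_c$, and then use upper semicontinuity of $\dim\Sigma^c_{a,b}$ in a family through this special fibre to propagate the equality to the generic member. The accompanying combinatorics --- certifying that for each $(d,n)$ outside the list (i)--(iii) some chain of symmetrizer steps in the available Hodge degrees really does close up onto $R_1$ --- is the delicate bookkeeping occupying the bulk of the Donagi--Green and Cox--Green refinements; I would expect the explicit special-fibre computation, together with the determination of its exact range of validity, to be the crux of the whole proof.
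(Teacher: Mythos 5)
Your proposal is a faithful outline of the Donagi/Donagi--Green/Cox--Green symmetrizer argument (period map $\Rightarrow$ graded pieces of the Jacobian ring and the IVHS multiplications $\Rightarrow$ symmetrizer lemma to reconstruct $R_1$ and hence all of $J(X)$ $\Rightarrow$ recovery of $X$), which is exactly the approach the paper ascribes to this theorem; the paper itself gives no proof but simply cites \cite{Don,DonGr,CoxGr} and summarizes the method in the same way. The one genuinely hard ingredient, the symmetrizer lemma, is correctly identified but left as a black box in your sketch, just as it is in the paper.
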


Note that (iii) corresponds to the situation considered in Theorem \ref{thm:main1} and
ensures that $\ka_X$ is a Calabi--Yau category \cite{KuzCY}.

Also note that in the exceptions (i)-(iii), either $X$ is a Fano or a Calabi--Yau variety. The latter 
is the case
$d=n+2$ in (iii). In the Calabi--Yau situation,  the global Torelli theorem
is known to hold (and not only generically)
for the cases $(d,n)=(3,1)$ (elliptic curves),
$(d,n)=(4,2)$ (quartic K3 surfaces), and $(d,n)=(5,3)$ (quintic threefolds).
The first two are either trivial or special cases of the global Torelli theorem
for K3 surfaces. The case of quintic threefolds is much harder
and has been settled by Voisin in \cite{VoiQu}.

In the Fano situation, the global Torelli theorem really fails 
for $(d,n)=(3,2)$, for the Hodge structure of a cubic surface $X\subset\PP^3$ is of type $(1,1)$ and thus cannot
distinguish between non-isomorphic cubic surfaces.
So, the first interesting case is that of cubic fourfolds $X\subset \PP^5$.
Again, the global Torelli theorem is known to hold for those, a result due
to Voisin \cite{VoisinGT,VoisinErr}. Her proof eventually relies on the global Torelli theorem
for K3 surfaces of degree two (for which a direct proof was given by Shah \cite{Shah}).
Another proof for cubic fourfolds,  not drawing upon K3 surfaces, was given by Looijenga in \cite{Loo}
and yet another  more recent one by Charles \cite{Ch} uses Verbitsky's global Torelli theorem
applied to the hyperk\"ahler fourfold provided by the Fano variety of lines $F(X)$.

\smallskip

Donagi's proof of the generic global Torelli theorem for hypersurfaces uses the period map to
identify certain graded parts of the Jacobian rings of $X$ and $X'$. Applying his symmetrizer lemma \cite[Prop.\ 6.2]{Don}, for which one has
to exclude (i)-(iii), allows him to deduce from this a graded ring isomorphism $J(X)\cong J(X')$.
A version of the Mather--Yau theorem then implies $X\cong X'$. The argument breaks down for the exceptional
cases and, indeed, in the existing proofs of the global Torelli theorem for cubic fourfolds
the Jacobian ring makes no appearance.

\smallskip

The idea of our approach is to show that whenever there exists a
Hodge isometry $H^4(X,\ZZ)_{\rm pr}\cong H^4(X',\ZZ)_{\rm pr}$
between two (very general) smooth cubic fourfolds, then the K3 categories $\ka_X$ and $\ka_{X'}$ are equi\-valent.
This relies on the derived global Torelli theorem for K3 surfaces due to Orlov \cite{OrlovK3} and the 
result of Addington and Thomas \cite{AT} showing in particular that
the set of cubics $X$ for which $\ka_X$ is equivalent to the bounded derived category $\Db(S)$ of some K3 surface $S$ is dense. If an equivalence $\ka_X\cong\ka_{X'}$ in addition commutes with the natural
auto-equivalence $(1)$ given by mapping an object $E$ to the projection of $E\otimes \ko(1)$ (called the degree shift functor), then
$\HH^*(\ka_X,(1))\cong \HH^*(\ka_{X'},(1))$ essentially by definition of the Hochschild cohomology
of $(\ka,(1))$. Theorem \ref{thm:main1} then yields
a graded ring isomorphism $J(X)\cong J(X')$ and, by the Mather--Yau theorem, an isomorphism $X\cong X'$.
In order to reduce to the situation where the equivalence $\ka_X\cong\ka_{X'}$ indeed commutes with the degree shift functor,
one needs to argue that the set of cubics $X$ for which $\ka_X\cong\Db(S,\alpha)$ for some twisted K3 surface
$(S,\alpha)$ without any spherical objects is dense in the moduli space, cf.\ \cite{HuyCubic}. This suffices to conclude
the compatibility with the degree shift functor, as due to the results of \cite{HMS2} the group of auto-equivalences of $\Db(S,\alpha)$
is essentially trivial.

\subsection{Graded matrix factorizations}
Kuznetsov's category $\ka_X$ of a hypersurface $X\subset\PP^{n+1}$ defined
by an equation $f\in k[x_0,\ldots,x_{n+1}]$ has been shown to be equivalent to the category
of graded matrix factorizations ${\rm MF}(f,\ZZ)$, see \cite{Orlov} and Section \ref{sec:MF} for the
definition and some facts. Although we do not make use of this equivalence,
it served as a motivation for our approach. In particular, Dyckerhoff's description  \cite{Dyck} of the
Hochschild cohomology $\HH^*({\rm MF}(f))$ of the category of ungraded matrix factorization as the Jacobian
ring $J(X)$ got this project started.

More precisely, Dyckerhoff studies an isolated hypersurface singularity, i.e.\ a regular local $k$-algebra
$R$ and a non-unit $f\in R$ such that the quotient $R/(f)$ has an isolated singularity. He then shows that the Hochschild
cohomology $\HH^*({\rm MF}(f))$ (which is concentrated in even degree) of the dg-category of $\ZZ/2\ZZ$-periodic
matrix factorizations  is isomorphic to the Jacobian ring $R/(\partial_if)$, cf.\ \cite[Cor.\ 6.5]{Dyck}.

The naive original idea of our approach was to say that any equivalence $\ka_X\cong\ka_{X'}$, interpreted
as an equivalence ${\rm MF}(f,\ZZ)\cong{\rm MF}(f',\ZZ)$, that commutes with the degree shift functor $(1)$
on both sides, descends to an equivalence ${\rm MF}(f)\cong{\rm MF}(f,\ZZ)/(1)\cong{\rm MF}(f',\ZZ)/(1)\cong{\rm MF}(f')$.
The latter then induces a  ring isomorphism $J(X)\cong J(X')$.

\smallskip

There are, however, a number of problems that one has to address when using the equivalence $\ka_X\cong{\rm MF}(f,\ZZ)$.
First, this is an equivalence of triangulated categories. It comes with an enhancement, but in order to apply
any graded version of \cite{Dyck} one would need to make sure that the enhancement for ${\rm MF}(f,\ZZ)$
corresponds to the one used by Dyckerhoff.
Also, the compatibility of the equivalence $\ka_X\cong\ka_{X'}$ with the degree shift functor would need
to be lifted to the enhancement.
Second, the naive idea to pass from the category ${\rm MF}(f,\ZZ)$ to the quotient ${\rm MF}(f)={\rm MF}(f,\ZZ)/(1)$ needs
to be spelled out and possibly be lifted to the enhancements. Third, the relation between the degree
shift functors $(k)$ for ${\rm MF}(f,\ZZ)$ and the auto-equivalences of $\ka_X$ is rather technical, see \cite{BFK}.

So, we decided to work entirely on the derived side $\ka_X\subset\Db(X)$ and adapted Kuznetsov's philosophy that
viewing $\ka_X$ as an admissible subcategory of $\Db(X)$ and working exclusively with Fourier--Mukai
kernels replaces the choice of a dg-enhancement for $\ka_X$.

Versions of Hochschild cohomology for categories of graded
matrix factorizations have been introduced and studied in \cite{BFK2},
in which a relation to the Jacobian ring was also explained, see Section \ref{sec:MF}
for further comments.

\medskip

\noindent
{\bf Acknowledgments:} DH thanks Toby Dyckerhoff for explanations concerning \cite{Dyck}, Andrey Soldatenkov for his help with the literature
and Alex Perry for helpful discussions on Kuznetsov's work. JR 
thanks  Arend Bayer, John Calabrese for discussions and, especially, Matt Ballard, who originally suggested  to use \cite{Dyck} 
and the Mather--Yau theorem to recover a hypersurface from its category $\ka_X$.

\section{Kuznetsov's category $\ka_X$ from the kernel perspective}
In this section we first recall the definition of the category $\ka_X$ for a smooth hypersurface $X\subset\PP^{n+1}$ (over an arbitrary field
of characteristic zero)
of degree $d$ and state Kuznetsov's result saying that it
is a  Calabi--Yau category under suitable assumptions on $d$ and $n$. Then we revisit the auto-equivalence
$(1)\colon \ka_X\congpf\ka_X$, $E\mapsto i^*(E\otimes\ko_X(1))$ (the degree shift functor)
and Kuznetsov's
central observation that the $d$-fold composition $(d)$ is the double shift $[2]$. As for our
purposes it is important to understand this not only as an isomorphism of functors but as an isomorphism
between their Fourier--Mukai kernels, we essentially reprove his result in the kernel setting. This then allows
us to factor the isomorphism $(d)\cong[2]$ through the tangent bundle, see Lemma \ref{lem:alphabeta}, which is crucial for proving
the existence of the ring homomorphism from the Jacobian ring $J(X)$ to the Hochschild cohomology
$\HH^*(\ka_X,(1))$.

\subsection{} Let $X\subset \PP\coloneqq\PP^{n+1}$ be  a smooth hypersurface of degree $d$
and let $$\ka_X\coloneqq \langle\ko_X,\ldots,\ko_X(n+1-d)\rangle^\perp\subset\Db(X)$$
be the full triangulated subcategory of all objects $E$ with $\Hom^*(\ko_X(\ell),E)=0$ for all $\ell=0,\ldots,n+1-d$.
We will denote the image of $\ka_X$ under the line bundle twist $E\mapsto E\otimes\ko_X(\ell)$ by $\ka_X(\ell)$,
which can also be described as $\langle\ko_X(\ell),\ldots,\ko_X(n+1-d+\ell)\rangle^\perp$.

By definition, the left orthogonal ${}^\perp\ka_X$ is the full triangulated subcategory
spanned by the exceptional collection $\ko_X,\ldots,\ko_X(n+1-d)$. This yields a semi-orthogonal
decomposition  
 $$\Db(X)=\langle\ka_X,\ko_X,\ldots,\ko_X(n+1-d)\rangle,$$
 see \cite{BK,BO}.
In particular, the inclusion $i_*\colon \ka_X\,\hookrightarrow\Db(X)$
admits right and left adjoint functors $$i^!,i^*\colon \Db(X)\to \ka_X$$ (also called right and left projections),
so that there exist functorial isomorphisms $$\Hom_{\Db(X)}(i_*E,F)\cong \Hom_{\ka_X}(E,i^!F)\text{ and }\Hom_{\Db(X)}(F,i_*E)\cong
\Hom_{\ka_X}(i^*F,E)$$ for all $E\in\ka_X$ and $F\in\Db(X)$. With this notation, any object $E\in \Db(X)$ sits in  unique
exact triangles
$$E'\to E\to i_*i^*E~~Ê\text{~~ and~~ }~~i_*i^!E\to E\to E''$$
with $E'\in\langle\ko_X,\ldots,\ko_X(n+1-d)\rangle={}^\perp\ka_X$ and $E''\in\langle\ko_X(d-n-2),\ldots,\ko_X(-1)\rangle=\ka_X^\perp$.

The category $\Db(X)$ is endowed with a Serre functor described by $S_X\colon E\mapsto E\otimes\ko_X(d-n-2)[n]$
and a Serre functor on the admissible subcategory $\ka_X\subset\Db(X)$ is then given by
$S_{\ka_X}\cong i^!\circ S_X\circ i_*$. This isomorphism can also be read as a description of $i^!$ as $S_{\ka_X}\circ i^*\circ S_X^{-1}$.

The category $\ka_X$ is called a \emph{Calabi--Yau category} if $S_{\ka_X}$ is isomorphic to a shift functor $E\mapsto E[N]$,
in which case $N$ is called its dimension. For instance, if $d=n+2$, then $X$ itself is a Calabi--Yau variety
and, in particular, $\Db(X)$ is a Calabi--Yau category (of dimension $n$). 

See Remark \ref{rem:newpr} for an argument proving the next result in the case of cubic fourfolds.

\begin{thm}{\rm (Kuznetsov \cite[Thm.\ 3.5]{KuzCY})}\label{thm:KuzCY}
Assume $d\mid(n+2)$. Then $\ka_X$ is a Calabi--Yau category of dimension $(n+2)(d-2)/d$.
\end{thm}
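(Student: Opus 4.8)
The plan is to establish directly that the Serre functor $S_{\ka_X}$ is isomorphic to the shift functor $E\mapsto E[N]$ with $N=(n+2)(d-2)/d$, which by definition is the assertion that $\ka_X$ is Calabi--Yau of dimension $N$. First I would dispose of the boundary case $d=n+2$: then $\omega_X\cong\ko_X$, so $S_{\ka_X}=S_X=[n]$ and indeed $n=(n+2)(d-2)/d$. So from now on assume $d<n+2$; in particular $1\le n+2-d$.

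The first step is to express the inverse Serre functor of $\ka_X$ through the \emph{left} projection $i^*$ rather than $i^!$. Dually to the formula $S_{\ka_X}\cong i^!\circ S_X\circ i_*$ recalled above, one has $S_{\ka_X}^{-1}\cong i^*\circ S_X^{-1}\circ i_*$; this is proved by the same sort of adjunction chase, now using that $i^*$ is left adjoint to $i_*$ together with Serre duality on $\Db(X)$ and on $\ka_X$. Since $S_X^{-1}\colon F\mapsto F\otimes\ko_X(n+2-d)[-n]$, this gives, for $E\in\ka_X$,
$$S_{\ka_X}^{-1}(E)\cong i^*\bigl(i_*E\otimes\ko_X(n+2-d)\bigr)[-n].$$

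The second, and main, step is to identify the projected twist with an iterate of the degree shift functor: I claim $i^*\bigl(i_*(-)\otimes\ko_X(k)\bigr)\cong(k)$ as endofunctors of $\ka_X$ for all $1\le k\le n+2-d$. This is proved by induction on $k$, the base case $k=1$ being the definition of $(1)$. For the inductive step, write the projection triangle $C_{k-1}\to i_*E\otimes\ko_X(k-1)\to i_*(k-1)(E)$ with $C_{k-1}\in{}^\perp\ka_X$; since $i_*E\otimes\ko_X(k-1)$ already lies in $\ka_X(k-1)$, it is orthogonal to $\ko_X(k-1),\ldots,\ko_X(n+1-d)$, and an inspection of the semiorthogonal decomposition spanned by the exceptional collection $\ko_X,\ldots,\ko_X(n+1-d)$ shows that the correction term $C_{k-1}$ is then built only from $\ko_X,\ldots,\ko_X(k-2)$. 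Hence $C_{k-1}\otimes\ko_X(1)$ lies in $\langle\ko_X(1),\ldots,\ko_X(k-1)\rangle\subset{}^\perp\ka_X$, so it is killed by $i^*$; applying $i^*$ to the triangle twisted by $\ko_X(1)$ then identifies $(k)(E)=(1)\bigl((k-1)(E)\bigr)$ with $i^*\bigl(i_*E\otimes\ko_X(k)\bigr)$, completing the induction. Feeding $k=n+2-d$ into the formula of the previous paragraph yields $S_{\ka_X}^{-1}\cong(n+2-d)\circ[-n]$, i.e.\ $S_{\ka_X}\cong(-(n+2-d))\circ[n]$.

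It remains to collapse the degree shift using Kuznetsov's relation $(d)\cong[2]$, which I treat as available here (it is reproved on the Fourier--Mukai kernel side in the following subsections, cf.\ Lemma \ref{lem:alphabeta}). Writing $n+2=pd$ with $p\in\NN$, we have $n+2-d=(p-1)d$, hence $(-(n+2-d))\cong\bigl((d)^{p-1}\bigr)^{-1}\cong[-2(p-1)]$, and therefore
$$S_{\ka_X}\cong[-2(p-1)]\circ[n]=\bigl[\,n-2(p-1)\,\bigr]=\bigl[\,(n+2)(d-2)/d\,\bigr].$$
I expect the main obstacle to be the second step: one has to keep precise track of which members of the exceptional collection $\ko_X,\ldots,\ko_X(n+1-d)$ enter the semiorthogonal projection triangles and check that the range $k\le n+2-d$ --- equivalently $k-1\le n+1-d$ --- is exactly what forces the correction term into $\ker(i^*)$. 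The deepest ingredient overall is of course $(d)\cong[2]$ itself, but that is proved independently; for cubic fourfolds the entire argument in the kernel formulation is recorded in Remark \ref{rem:newpr}.
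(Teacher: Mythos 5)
Your proof is correct, but note that the paper itself does not prove this statement: it is quoted from Kuznetsov, and the only argument the paper supplies is Remark \ref{rem:newpr}, which treats cubic fourfolds. Your argument is precisely the generalization of that remark. The strategy is the same --- establish $S_{\ka_X}\cong (n+2-d)^{-1}\circ[n]$ and then collapse the degree shift via $(d)\cong[2]$ --- but the paper can get away with quoting Lemma \ref{lem:Pell} directly because for cubics $n+2-d=d$, whereas in general $n+2-d=(p-1)d$ may exceed $d$, and this is the gap your second step fills. Your induction showing $i^*(i_*(-)\otimes\ko_X(k))\cong(k)$ for $k\le n+2-d$ is the functor-level analogue of Lemma \ref{lem:Pell} pushed to its natural range; reassuringly, the paper's own proof of that lemma observes that the kernel-level induction works exactly as long as $-(n+1-d)+\ell\le 0$, i.e.\ $\ell\le n+2-d$, so your extended range is consistent with what the kernel argument gives (the paper only states $\ell\le d$ because that is all it needs later). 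Your orthogonality bookkeeping for the correction term $C_{k-1}$ is sound: since $i_*E\otimes\ko_X(k-1)\in\ka_X(k-1)$ is right-orthogonal to $\ko_X(k-1),\ldots,\ko_X(n+1-d)$ and $i_*i^*$ of it lies in $\ka_X$, the long exact sequence forces $C_{k-1}$ into $\langle\ko_X,\ldots,\ko_X(k-2)\rangle$, and the twist by $\ko_X(1)$ stays inside ${}^\perp\ka_X$ exactly when $k\le n+2-d$. The remaining ingredients ($S_{\ka_X}^{-1}\cong i^*\circ S_X^{-1}\circ i_*$ by the dual adjunction chase, and $(d)\cong[2]$ from Corollary \ref{cor:Pell}, whose hypothesis $d\le(n+2)/2$ holds since $d\mid(n+2)$ and $d<n+2$) are all legitimately available, and the final arithmetic $n-2(p-1)=(n+2)(d-2)/d$ checks out. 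In short: where the paper defers to \cite{KuzCY}, you have reconstructed a complete proof along the lines Kuznetsov and Remark \ref{rem:newpr} suggest.
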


Clearly, with $\ka_X$ also all twists $\ka_X(\ell)$ are Calabi--Yau categories (of the same dimension).

\begin{remark}\label{rem:noaccident}
As shall be explained, it is no accident that the Jacobian ring $$J(X)\coloneqq k[x_0,\ldots,x_{n+1}]/(\partial_i f)$$
of the equation $f\in k[x_0,\ldots,x_{n+1}]_d$ defining the hypersurface $X\subset\PP^{n+1}$
is of top degree $(n+2)(d-2)$, see \cite[Thm.\ 2.5]{Don}.
\end{remark}

\begin{ex}
The first interesting cases occur for $d=3$ and hypersurfaces $X\subset \PP^{n+1}$ of dimension $n=4,7,10,\ldots$. In these cases
the Calabi--Yau categories $\ka_X$ are of dimensions $N=2,3,4,\ldots$, respectively.
Besides the case of a quartic K3 surfaces $X\subset \PP^3$, in which case $\ka_X=\Db(X)$, the case
of a cubic fourfold $X\subset\PP^5$ is the only case that leads to a Calabi--Yau category of dimension two
(in this case a K3 category).
\end{ex}

\subsection{}  Kuznetsov \cite{KuzBas} associates with any subcategory $\kb\subset\Db(X)$, say full triangulated and closed under
taking direct summands,  a subcategory $\kb\boxtimes\Db(X)\subset\Db(X\times X)$.
By definition it is the smallest closed full triangulated subcategory closed under taking direct summands that contains
all objects of the form $E\boxtimes F\coloneqq p_1^*E\otimes p_2^*F$ with $E\in\kb$ and $F\in \Db(X)$. The subcategory $\Db(X)\boxtimes\kb$ is defined similarly.
Note that $\Db(X)\boxtimes\Db(X)\cong\Db(X\times X)$. According to \cite[Prop.\ 5.1]{KuzBas}, any semi-orthogonal decomposition
$\Db(X)=\langle\ka_1,\ldots,\ka_m\rangle$ induces a semi-orthogonal decomposition
$\Db(X\times X)=\langle\ka_1\boxtimes\Db(X),\ldots,\ka_m\boxtimes\Db(X)\rangle$.
Moreover, in this case (cf.\ \cite[Prop.\ 5.2]{KuzBas}):
\begin{equation}\label{eqn:Kuzaltdes}
\ka_i\boxtimes\Db(X)=\{E\in\Db(X\times X)\mid p_{1*}(E\otimes p_2^*F)\in\ka_i \text{ for all } F\in\Db(X)\}.
\end{equation}

We shall also need the exterior product $\kb\boxtimes\kb'\subset\Db(X\times X)$ of two categories $\kb,\kb'\subset\Db(X)$. As introduced in \cite[Sec.\ 5.5]{KuzBas},
this is the intersection of $\kb\boxtimes\Db(X),\Db(X)\boxtimes\kb'\subset\Db(X\times X)$.\footnote{The notation may suggest
to define $\kb\boxtimes\kb'$ as the smallest triangulated subcategory that is closed under taking direct summands and contains
all objects of the form $E\boxtimes E'$ with $E\in\kb$ and $E'\in\kb'$. But this \emph{a priori} produces a smaller
subcategory. However, if $\kb,\kb'$ are components of semi-orthogonal decompositions this description is valid, as was explained to us by Alex Perry.}  For two semi-orthogonal decompositions
$\Db(X)=\langle\ka_1,\ldots,\ka_m\rangle$ and $\Db(X)=\langle\ka_1',\ldots,\ka_n'\rangle$ the products
$\ka_i\boxtimes\ka_j'\subset\Db(X\times X)$ are admissible subcategories and, in fact, describe a semi-orthogonal decomposition
of $\Db(X\times X)$, see \cite[Thm.\ 5.8]{KuzBas}. The case of interest to us is the product
\begin{equation}\label{eqn:AA}
\ka_X(-(n+1-d))\boxtimes\ka_X\subset\Db(X\times X),
\end{equation}
which can alternatively be described as the subcategory right orthogonal to
$\langle\ko_X(-(n+1-d)),\ldots,\ko_X\rangle\boxtimes \Db(X)$ and $\Db(X)\boxtimes\langle\ko_X,\ldots,\ko_X(n+1-d)\rangle$.
We shall denote the inclusion (\ref{eqn:AA}) by $j_*$ and its right and left adjoint by 
\begin{equation}\label{eqn:jj}
j^!,j^*\colon \Db(X\times X)\to\ka_X(-(n+1-d))\boxtimes\ka_X\subset\Db(X\times X).
\end{equation}

As a consequence of Theorem \ref{thm:KuzCY} one finds

\begin{cor}\label{cor:KuzThm}
Assume $d\mid(n+2)$. Then for all $\ell$ the product $\ka_X(\ell)\boxtimes\ka_X$ is a Calabi--Yau category of dimension 
$2(n+2)(d-2)/d$.
\end{cor}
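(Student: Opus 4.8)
The plan is to deduce this from Theorem~\ref{thm:KuzCY} together with the compatibility of Serre functors under the box-product construction of \cite[Sec.~5.5]{KuzBas}. The key input is that if $\ka_1\subset\Db(X)$ and $\ka_2\subset\Db(X)$ are admissible subcategories with Serre functors $S_{\ka_1}$ and $S_{\ka_2}$, then the exterior product $\ka_1\boxtimes\ka_2\subset\Db(X\times X)$ is admissible (already recalled in the excerpt) and its Serre functor is the ``exterior product'' $S_{\ka_1}\boxtimes S_{\ka_2}$, i.e.\ the functor sending $E\boxtimes F$ to $S_{\ka_1}(E)\boxtimes S_{\ka_2}(F)$ and extended to all of $\ka_1\boxtimes\ka_2$. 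Granting this, the corollary is immediate: by Theorem~\ref{thm:KuzCY} (and the remark that all twists $\ka_X(\ell)$ are Calabi--Yau of the same dimension $N\coloneqq(n+2)(d-2)/d$), we have $S_{\ka_X(\ell)}\cong[N]$ and $S_{\ka_X}\cong[N]$, hence $S_{\ka_X(\ell)\boxtimes\ka_X}\cong[N]\boxtimes[N]\cong[2N]$, which says precisely that $\ka_X(\ell)\boxtimes\ka_X$ is a Calabi--Yau category of dimension $2N=2(n+2)(d-2)/d$.

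First I would recall the general formula for the Serre functor of an admissible subcategory of $\Db(X\times X)$ arising as an exterior product. The Serre functor of $\Db(X\times X)$ is $S_{X\times X}\colon G\mapsto G\otimes\omega_{X\times X}[2\dim X]\cong G\otimes(\omega_X\boxtimes\omega_X)[2n]$, which is itself the exterior product $S_X\boxtimes S_X$ of the Serre functor $S_X$ of $\Db(X)$ with itself. Next, for an admissible subcategory $\ka\subset\Db(Y)$ with inclusion $i_*$, the Serre functor is $S_\ka\cong i^!\circ S_Y\circ i_*\cong S_\ka\circ i^*\circ S_Y^{-1}\circ i_*$ (as recalled in the excerpt for $\ka_X\subset\Db(X)$). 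Applying this with $Y=X\times X$ and $\ka=\ka_X(\ell)\boxtimes\ka_X$, using the inclusion $j_*$ and projections $j^!,j^*$ of \eqref{eqn:jj}, one gets $S_{\ka_X(\ell)\boxtimes\ka_X}\cong j^!\circ S_{X\times X}\circ j_*$. The point is then to check that the restriction of $S_{X\times X}=S_X\boxtimes S_X$ to the exterior product subcategory is compatible with the factorization: since the defining semi-orthogonalities of $\ka_X(\ell)\boxtimes\ka_X$ are with respect to subcategories of the form $\langle\ko\rangle\boxtimes\Db(X)$ and $\Db(X)\boxtimes\langle\ko\rangle$, and these are preserved (up to the analogous twists) by $S_X\boxtimes S_X$ and its inverse, the projections $j^!,j^*$ intertwine appropriately with the projections $i^!,i^*$ on each factor. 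Concretely, on objects of the form $E\boxtimes F$ with $E\in\ka_X(\ell)$, $F\in\ka_X$ one computes $j^!(S_{X\times X}(E\boxtimes F))\cong j^!(S_X E\boxtimes S_X F)\cong (i^! S_X E)\boxtimes(i^! S_X F)\cong S_{\ka_X(\ell)}(E)\boxtimes S_{\ka_X}(F)$, and since such objects generate $\ka_X(\ell)\boxtimes\ka_X$ (by \cite[Thm.~5.8]{KuzBas} and the footnote in the excerpt, the exterior product of components of semi-orthogonal decompositions is indeed generated by the pure tensors), this determines the Serre functor on the whole subcategory.

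The main obstacle is precisely making the interchange ``$j^!$ of an exterior product equals the exterior product of the $i^!$'s'' rigorous: one must verify that for $G\in\Db(X\times X)$ the object $(i^!\boxtimes i^!)(G)$ — suitably interpreted via the projections onto the two factors acting on kernels in $\Db(X\times X\times X\times X)$ — really lies in $\ka_X(\ell)\boxtimes\ka_X$ and represents $j^!(G)$, i.e.\ that the two right-adjoint constructions agree. This is where one needs the description \eqref{eqn:Kuzaltdes} and the results of \cite[Sec.~5.5, Thm.~5.8]{KuzBas}: the projection functors for $\ka_i\boxtimes\Db(X)$ and $\Db(X)\boxtimes\ka_j'$ commute (because the two semi-orthogonal decompositions they come from are ``transverse'' in the sense of Kuznetsov), so their composition computes the projection onto the intersection $\ka_i\boxtimes\ka_j'$, and this composition is visibly the exterior product of the individual projections. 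Once this commutation is in hand, everything else is formal, and the dimension count $2N=2(n+2)(d-2)/d$ drops out. I would also note in passing the compatibility with Remark~\ref{rem:noaccident}: the top degree $2(n+2)(d-2)$ of the relevant (doubled) Jacobian-type object matches $2N$ after the degree conventions are taken into account, which is the reason this corollary will be exactly what is needed later for $\HH^*(\ka_X,(1))$.
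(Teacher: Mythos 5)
Your proposal is correct and follows essentially the same route as the paper: both arguments factor $j^*$ and $j^!$ through the one-factor-at-a-time projections (with kernels base-changed from those of $i^*$ and $i^!$), combine the Serre functor comparison $S\circ j^*\cong j^!\circ S_{X\times X}$ with the relation $S_{\ka_X}\cong[(n+2)(d-2)/d]$ from Theorem \ref{thm:KuzCY}, and conclude $S\cong[2(n+2)(d-2)/d]$.
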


\begin{proof}
Consider the left and right projections $j^*,j^!\colon\Db(X\times X)\to\ka_X(-(n+1-d))\boxtimes\ka_X$ of the inclusion,
which can be written as the composition of left and right projections
${\rm id}\boxtimes i^*,{\rm id}\boxtimes i^!\colon\Db(X\times X)\to\Db(X)\boxtimes\ka_X$ and $i^*\boxtimes{\rm id},i^!\boxtimes{\rm id}\colon\Db(X)\boxtimes\ka_X\subset\Db(X\times X)\to \ka_X(-(n+1-d))\boxtimes\Db(X)$, for all of which the Fourier--Mukai
kernels (see below) are obtained by base change from the ones for $i^*$ and $i^!$.
Now, together with the comparison $S\circ j^*\cong j^!\circ S_{X\times X}$ of $j^*$ and $j^!$,  which
can also be read as a description of the Serre functor $S$ of $\ka_X(-(n+1-d))\boxtimes\ka_X$,
and the relation between $i^*$ and $i^!$ obtained from $S_{\ka_X}\cong[(n+2)(d-2)/d]$ (see Theorem \ref{thm:KuzCY}),
this yields the assertion.
\end{proof}
\subsection{} For $P\in \Db(X\times X)$ we denote by $\Phi_P\colon\Db(X)\to\Db(X)$ the Fourier--Mukai functor
$E\mapsto p_{2*}(p_1^*E\otimes P)$. Applying Kuznetsov's arguments \cite{KuzBas}, one easily finds

\begin{lem}\label{lem:Pcont}
{\rm (i)} The essential image of $\Phi_P$ is contained in $\ka_X\subset\Db(X)$ if and only if $P\in\Db(X)\boxtimes\ka_X$.

{\rm (ii)} There exists a factorization of $\Phi_P$ via the projection $i^*\colon \Db(X)\to \ka_X$, i.e.
$\Phi_P=0$ on $\langle\ko,\ldots,\ko(n+1-d)\rangle$, if and only if $P\in\ka_X(-(n+1-d))\boxtimes\Db(X)$.
\end{lem}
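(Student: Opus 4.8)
\textbf{Proof plan for Lemma \ref{lem:Pcont}.}
The plan is to reduce both statements to the characterization of the subcategories $\kb\boxtimes\Db(X)$ and $\Db(X)\boxtimes\kb$ recalled above, in particular to equation (\ref{eqn:Kuzaltdes}), together with the defining orthogonality of $\ka_X$ and the behavior of the left projection $i^*$.

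For (i): the essential image of $\Phi_P$ lies in $\ka_X$ precisely when $\Hom^*(\ko_X(\ell),\Phi_P(E))=0$ for all $E\in\Db(X)$ and all $\ell=0,\ldots,n+1-d$. I would rewrite $p_{2*}(p_1^*E\otimes P)$ and use adjunction (projection formula along $p_2$, which is proper) to express $\Hom^*(\ko_X(\ell),\Phi_P(E))$ in terms of $P$; alternatively, and more cleanly, I would invoke (\ref{eqn:Kuzaltdes}) applied to the semi-orthogonal decomposition $\Db(X)=\langle\ka_X,\ko_X,\ldots,\ko_X(n+1-d)\rangle$ with the roles of the two factors swapped, which says exactly that $\Db(X)\boxtimes\ka_X=\{P\mid p_{2*}(P\otimes p_1^*F)\in\ka_X\text{ for all }F\}$. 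Since $\Phi_P(E)=p_{2*}(p_1^*E\otimes P)$, this is verbatim the condition that the image of $\Phi_P$ is contained in $\ka_X$, so (i) is immediate once the symmetric version of (\ref{eqn:Kuzaltdes}) is in hand. The only point needing care is that (\ref{eqn:Kuzaltdes}) as stated is for the left factor, so one must check that the analogue with $p_1,p_2$ exchanged holds — this follows from \cite[Prop.\ 5.2]{KuzBas} applied after the obvious swap, or by transposing along the involution of $X\times X$.

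For (ii): the condition ``$\Phi_P=0$ on $\langle\ko_X,\ldots,\ko_X(n+1-d)\rangle$'' means $\Phi_P(\ko_X(\ell))=p_{2*}(p_1^*\ko_X(\ell)\otimes P)=p_{2*}(P\otimes p_1^*\ko_X(\ell))=0$ for $\ell=0,\ldots,n+1-d$. Twisting by $\ko_X(-\ell)$ on the first factor and noting that $p_{2*}(P\otimes p_1^*\ko_X(\ell))=0$ for all these $\ell$ is equivalent to $p_{2*}(P\otimes p_1^*G)=0$ for all $G\in\langle\ko_X,\ldots,\ko_X(n+1-d)\rangle$, I would reinterpret this via the same alternative description (\ref{eqn:Kuzaltdes}): the vanishing $p_{2*}(P\otimes p_1^*G)=0$ for all $G$ in the exceptional block ${}^\perp\ka_X$ is exactly membership of $P$ in the right orthogonal to ${}^\perp\ka_X\boxtimes\Db(X)$, i.e.\ in $\ka_X\boxtimes\Db(X)$ by the semi-orthogonal decomposition $\Db(X\times X)=\langle\ka_X\boxtimes\Db(X),\ko_X\boxtimes\Db(X),\ldots\rangle$. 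Finally I would match this with the claimed description $\ka_X(-(n+1-d))\boxtimes\Db(X)$: by definition $\ka_X(-(n+1-d))=\langle\ko_X(-(n+1-d)),\ldots,\ko_X\rangle^\perp$, so after the evident twist on the first factor the two subcategories $\ka_X\boxtimes\Db(X)$ and $\ka_X(-(n+1-d))\boxtimes\Db(X)$ agree up to the line bundle twist $p_1^*\ko_X(-(n+1-d))$, which is precisely the ambiguity introduced by running the adjunction with $\ko_X(\ell)$, $\ell=0,\ldots,n+1-d$, rather than a fixed line bundle — so the normalization works out. That the factorization of $\Phi_P$ through $i^*$ exists iff $\Phi_P$ kills ${}^\perp\ka_X$ is a formal consequence of the semi-orthogonal decomposition and the universal property of the projection $i^*$, applied slicewise in $E$.

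The main obstacle I expect is bookkeeping rather than substance: tracking the line bundle twists and the left/right (i.e.\ $p_1$ versus $p_2$) asymmetry so that the subcategories named in the statement — $\Db(X)\boxtimes\ka_X$ in (i) and $\ka_X(-(n+1-d))\boxtimes\Db(X)$ in (ii) — come out with exactly the normalization claimed, and verifying that (\ref{eqn:Kuzaltdes}) may legitimately be used in the $p_2$-projection form. Once these compatibilities are pinned down, both equivalences are essentially restatements of Kuznetsov's base-change description of $\kb\boxtimes\Db(X)$.
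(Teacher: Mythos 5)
Your proposal is correct and takes essentially the same route as the paper: part (i) is the symmetric form of (\ref{eqn:Kuzaltdes}), and part (ii) follows from the adjunction $\Hom_X(E,\Phi_P(\ko_X(\ell)))\cong\Hom_{X\times X}(p_1^*\ko_X(-\ell)\otimes p_2^*E,P)$ together with the induced semi-orthogonal decomposition of $\Db(X\times X)$. One small tightening: since the adjunction dualizes $\ko_X(\ell)$ to $\ko_X(-\ell)$, the vanishing condition in (ii) is directly right-orthogonality to $\langle\ko_X(-(n+1-d)),\ldots,\ko_X\rangle\boxtimes\Db(X)$, i.e.\ membership in $\ka_X(-(n+1-d))\boxtimes\Db(X)$, so your detour through $\ka_X\boxtimes\Db(X)$ and the subsequent renormalization is unnecessary (and that intermediate identification, as literally stated, is not quite right).
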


\begin{proof}
The first assertion follows from (\ref{eqn:Kuzaltdes}). For the second use that $\Phi_P(\ko_X(\ell))=0$
implies $\Hom_{X\times X}(p_1^*\ko_X(-\ell)\otimes p_2^*E,P)\cong\Hom_X(E,p_{2*}(p_1^*\ko_X(\ell)\otimes P))=0$
for all $E\in \Db(X)$. Hence, if $\Phi_P=0$ on $\langle\ko_X,\ldots,\ko_X(n+1-d)\rangle$, then $P\in\langle\ko_X(-(n+1-d)),\ldots,\ko_X\rangle^\perp\boxtimes\Db(X)=\ka_X(-(n+1-d))\boxtimes\Db(X)$.
\end{proof}

\begin{cor}
There exists a factorization
$$\xymatrix@R-7pt@C-6pt{\Db(X)\ar[dr]_{i^*}\ar[rrr]^-{\Phi_P}&&&\Db(X)\\
&\ka_X\ar[r]_{\bar\Phi_P}&\ka_X\ar[ur]_{i_*}&}$$
if and only if $P\in \ka_X(-(n+1-d))\boxtimes\ka_X$.\qed
\end{cor}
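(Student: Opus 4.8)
The plan is to deduce the Corollary directly from Lemma~\ref{lem:Pcont}, using that by construction $\ka_X(-(n+1-d))\boxtimes\ka_X$ is the intersection $\bigl(\ka_X(-(n+1-d))\boxtimes\Db(X)\bigr)\cap\bigl(\Db(X)\boxtimes\ka_X\bigr)$ inside $\Db(X\times X)$, together with the two elementary facts about the semi-orthogonal decomposition $\Db(X)=\langle\ka_X,\ko_X,\ldots,\ko_X(n+1-d)\rangle$ that $i_*i^*G\cong G$ for $G\in\ka_X$ and $i^*G'\cong 0$ for $G'\in{}^\perp\ka_X=\langle\ko_X,\ldots,\ko_X(n+1-d)\rangle$.

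For the ``only if'' direction I would start from a factorization $\Phi_P\cong i_*\circ\bar\Phi_P\circ i^*$. Its essential image is then contained in the essential image of $i_*$, namely $\ka_X$, so Lemma~\ref{lem:Pcont}(i) gives $P\in\Db(X)\boxtimes\ka_X$. Since $i^*$ kills each $\ko_X(\ell)$ with $0\le\ell\le n+1-d$, the functor $\Phi_P$ does too, and Lemma~\ref{lem:Pcont}(ii) gives $P\in\ka_X(-(n+1-d))\boxtimes\Db(X)$; intersecting the two memberships yields $P\in\ka_X(-(n+1-d))\boxtimes\ka_X$.

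For the converse I would assume $P\in\ka_X(-(n+1-d))\boxtimes\ka_X$ and produce $\bar\Phi_P$ explicitly. From $P\in\Db(X)\boxtimes\ka_X$ and Lemma~\ref{lem:Pcont}(i) the essential image of $\Phi_P$ lies in $\ka_X$, whence $\Phi_P\cong i_*\circ i^*\circ\Phi_P$ via the natural morphism coming from the semi-orthogonal truncation triangle $E\to i_*i^*E$. From $P\in\ka_X(-(n+1-d))\boxtimes\Db(X)$ and Lemma~\ref{lem:Pcont}(ii) the functor $\Phi_P$ vanishes on ${}^\perp\ka_X$; applying the triangulated functor $\Phi_P$ to the functorial triangle $E'\to E\to i_*i^*E$ with $E'\in{}^\perp\ka_X$ then gives a functorial isomorphism $\Phi_P\circ i_*\circ i^*\cong\Phi_P$. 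Setting $\bar\Phi_P\coloneqq i^*\circ\Phi_P\circ i_*\colon\ka_X\to\ka_X$ and chaining the two isomorphisms,
$$i_*\circ\bar\Phi_P\circ i^*=i_*\circ i^*\circ\Phi_P\circ i_*\circ i^*\cong i_*\circ i^*\circ\Phi_P\cong\Phi_P,$$
gives the asserted factorization.

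Since this is a purely formal consequence of Lemma~\ref{lem:Pcont} and the standard properties of the semi-orthogonal decomposition, I do not expect a genuine obstacle. The only point needing a moment's care is that the two isomorphisms used in the converse are isomorphisms of \emph{functors} and not merely object-wise, which is guaranteed by the functoriality of the truncation triangle $E'\to E\to i_*i^*E$; alternatively, all functors in sight are Fourier--Mukai and the factorization can be exhibited on kernels via the projections $j^!,j^*$ of~(\ref{eqn:jj}) attached to the inclusion $\ka_X(-(n+1-d))\boxtimes\ka_X\subset\Db(X\times X)$, which is the perspective adopted in the sequel.
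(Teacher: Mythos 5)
Your argument is correct and is exactly the intended one: the paper states this corollary with no written proof, treating it as the immediate combination of Lemma~\ref{lem:Pcont}(i) and (ii) with the definition of $\ka_X(-(n+1-d))\boxtimes\ka_X$ as the intersection of the two subcategories, which is precisely what you spell out. The extra care you take with functoriality of the truncation triangles is sound but not a point of divergence from the paper.
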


\begin{definition}\label{def:FM}
A functor $\Phi\colon \ka_X\to\ka_X$ is called a \emph{Fourier--Mukai functor} if it is isomorphic to
a functor of the form $\bar\Phi_P$ with $P\in  \ka_X(-(n+1-d))\boxtimes\ka_X$ as above.
\end{definition}

For any $P\in\Db(X\times X)$ one can consider $j^*P\in \ka_X(-(n+1-d))\boxtimes\ka_X$
which then defines a Fourier--Mukai functor $\bar\Phi_{j^*P}\colon\ka_X\to\ka_X$.

\begin{remark}\label{rem:FMequiCY}
The notion of a Fourier--Mukai functor $\bar \Phi_P\colon\ka_X\to\ka_{X'}$ between the Calabi--Yau categories of two different
hypersurfaces is defined similarly. In this case, the kernel is contained in $\ka_X(-(n+1-d))\boxtimes\ka_{X'}$.
\end{remark}

\begin{ex}\label{exa:id}
In the following, we shall denote by $\Delta=\Delta_X\subset X\times X$ the diagonal (and
also the diagonal embedding).

(i) Clearly, $\Phi_{\ko_{\Delta}}\cong{\rm id}$, but also $\bar\Phi_{j^*\ko_{\Delta}}\cong{\rm id}$.
We shall use the shorthand $$P_0\coloneqq j^*\ko_\Delta.$$

As observed in \cite[Prop.\ 3.8]{KuzHH} (see also Lemma \ref{lem:Pell}), $P_0$ can also be obtained as the image
of $\ko_\Delta$ under the left projection onto $\Db(X)\boxtimes\ka_X$ or onto $\ka_X(-(n+1-d))\boxtimes\Db(X)$. 
In other words, the cone of the adjunction $\ko_\Delta\to j^*\ko_\Delta\cong P_0$ is contained in the intersection of
$\langle\ko_X(-(n+1-d)),\ldots,\ko_X\rangle\boxtimes\Db(X)$ and $\Db(X)\boxtimes\langle\ko_X,\ldots,\ko_X(n+1-d)\rangle$. Indeed, the 
left projection image of $\ko_\Delta$ in $\Db(X)\boxtimes\ka_X$ considered as a Fourier--Mukai kernel
is automatically trivial on $\langle\ko_X(-(n+1-d)),\ldots,\ko_X\rangle$ and then use Lemma \ref{lem:Pcont}. Similarly, its image in $\ka_X(-(n+1-d))\boxtimes\Db(X)$ considered as a Fourier--Mukai kernel takes values in $\ka_X$ already.

(ii) The functor $\Phi_{\ko_\Delta(\ell)}\colon\Db(X)\to\Db(X)$ is the line bundle twist $E\mapsto E\otimes\ko_X(\ell)$.
We are particularly interested in the case $\ell=1$ and denote the projection of the corresponding kernel by
$$P_1\coloneqq j^*\ko_\Delta(1).$$ The induced functor was introduced by Kuznetsov
in \cite[Sec.\ 4]{Kuz1}:
$$(1)\coloneqq \bar\Phi_{P_1}\colon\ka_X\to\ka_X,~E\mapsto i^*(E\otimes\ko_X(1)).$$
\end{ex}

We call $(1)$ the \emph{degree shift functor}, which is motivated by interpreting $\ka_X$ as a category
of graded matrix factorizations, see Section \ref{sec:MF}.

\begin{remark}
Recall that for two objects $P,Q\in\Db(X\times X)$ the convolution $P\circ Q\in\Db(X\times X)$ is defined
as $p_{13*}(p_{12}^*P\otimes p_{23}^*Q)$. Then $\Phi_{P\circ Q}\cong\Phi_Q\circ\Phi_P\colon\Db(X)\to\Db(X)$,
see \cite[Ch.\ 5]{HuyFM}. Note that if $P,Q$ are contained in $\ka_X(-(n+1-d))\boxtimes\ka_X$, then so is
the convolution $P\circ Q$.

As an application, we rephrase the observation in Example \ref{exa:id}, (ii) and write $P_0$ as the convolution
\begin{equation}\label{eqn:P_0Conv}
P_0\cong[\ko(-m,m)\to\ko_\Delta]\circ[\ko(-m+1,m-1)\to\ko_\Delta]\circ\cdots\circ[\ko\to\ko_\Delta],
\end{equation}
where $\ko(a,b)\coloneqq\ko(a)\boxtimes\ko(b)$ and $m\coloneqq n+1-d$.
\end{remark}

\begin{remark}\label{rem:convP}
It is not difficult to show that for any $P\in \Db(X\times X)$ the projection $j^*P\in \ka_X(-(n+1-d))\boxtimes\ka_X$
is isomorphic to the right-left convolution with $P_0$, i.e.
$$j^*P\cong P_0\circ P\circ P_0.$$ Indeed, for every kernel $P$ there exists an exact triangle
$P'\to P\to j^*P$ with $P'$ contained in the category spanned by $\Db(X)\boxtimes\langle\ko_X,\ldots,\ko_X(n+1-d)\rangle$
and $\langle\ko_X(-(n+1-d)),\ldots,\ko_X\rangle\boxtimes\Db(X)$. Convoluting $P'$ with $P_0$ (or any object
in $\ka_X(-(n+1-d))\boxtimes\ka_X$) from both sides is trivial and, therefore, $P_0\circ P\circ P_0\cong P_0\circ j^*P\circ P_0$.
Similarly, using the arguments in Example \ref{exa:id}, (i), convoluting $\ko_\Delta'\to \ko_\Delta\to P_0$ with $j^*P$ from the left and with $j^*P\circ P_0$ from the right
yields isomorphisms $j^*P\cong j^*P\circ P_0\cong P_0\circ j^*P\circ P_0$.

As a special case, we record that
\begin{equation}\label{eqn:P1Conv}
P_1\cong P_0\circ\ko_\Delta(1)\circ P_0\cong P_0\circ\ko_\Delta(1)\circ[\ko\to\ko_\Delta],
\end{equation}
where for the second isomorphism we use (\ref{eqn:P_0Conv}) and $\ka_X(1)\subset\langle \ko_X(1),\ldots,\ko_X(m)\rangle^\perp$.
\end{remark}

The $\ell$-fold convolution of $P_1$ with itself yields $$P_\ell\coloneqq P_1^{\circ \ell}\coloneqq P_1\circ \ldots \circ P_1\in \ka_X(-(n+1-d))\boxtimes\ka_X,$$
whose induced Fourier--Mukai functor $\Phi_{P_\ell}$ is isomorphic to $$(\ell)\coloneqq(1)^\ell\colon\ka_X\to\ka_X.$$

\begin{remark}
In fact, Kuznetsov shows \cite[Cor.\ 3.18]{KuzCY} that the functor $(1)$ (which is his ${\rm O}|_{\ka_X}$)
is an equivalence and clearly so are all $(\ell)$. Alternatively, this can be deduced from Corollary \ref{cor:Pell}, see
Remark \ref{rem:reallyequ}.
\end{remark}

Note that in general neither is the kernel $P_\ell$  isomorphic to $j^*\ko_\Delta(\ell)$ nor is the functor $(\ell)$
isomorphic to $\bar\Phi_{j^*\ko_\Delta(\ell)}$. However, for $\ell=0,\ldots,d$ Kuznetsov establishes this isomorphism  \cite[Prop.\ 3.17]{KuzCY},
which is made explicit by the following kernel version, crucial for our purposes.
 See Corollary \ref{cor:Pell} for a characterization of all $P_\ell$.

\begin{lem}\label{lem:Pell}
Assume $d\leq (n+2)/2$. Then for all $\ell=0,\ldots,d$ there exist natural isomorphisms $$P_\ell\cong j^*\ko_\Delta(\ell).$$
\end{lem}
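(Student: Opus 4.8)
The strategy is induction on $\ell$, where the base cases $\ell=0,1$ are already established (Example~\ref{exa:id} gives $P_0\cong j^*\ko_\Delta$ and $P_1\cong j^*\ko_\Delta(1)$ essentially by definition). For the inductive step, the plan is to compute $P_\ell=P_1\circ P_{\ell-1}$ using the convolution description \eqref{eqn:P1Conv}, namely $P_1\cong P_0\circ\ko_\Delta(1)\circ[\ko\to\ko_\Delta]$, together with the inductive hypothesis $P_{\ell-1}\cong j^*\ko_\Delta(\ell-1)\cong P_0\circ\ko_\Delta(\ell-1)\circ P_0$ from Remark~\ref{rem:convP}. The key observation to exploit is that convolution with $\ko_\Delta(k)$ is just line-bundle twist, so that the only nontrivial contributions come from the two-term complexes $[\ko(a,b)\to\ko_\Delta]$ appearing in the expansion \eqref{eqn:P_0Conv} of $P_0$, and from the right-left $P_0$-convolution intertwining the projections. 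Concretely, I would like to show $P_\ell\cong P_0\circ\ko_\Delta(\ell)\circ P_0$, which by Remark~\ref{rem:convP} equals $j^*\ko_\Delta(\ell)$.

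The heart of the argument is a vanishing/cancellation claim: when one convolves $\ko_\Delta(\ell)$ (or the relevant truncations of $P_0$) against the "error term" cones $[\ko(a,b)\to\ko_\Delta]$, the twist by $\ko(a)$ on the first factor lands $\ko_X(a+\ell)$ into the range $\ko_X,\dots,\ko_X(n+1-d)$ that is killed, precisely when $\ell\le d$ and the degree bound $d\le(n+2)/2$ holds — the latter guaranteeing enough room ($n+1-d\ge d-1\ge \ell-1$) so that all the intermediate twisted line bundles stay inside the semiorthogonal block being annihilated. So I would unwind $P_\ell\cong P_1\circ P_{\ell-1}$, insert the convolution formulas, and then check term by term that every "defect" cone convolves to zero against the appropriate object, using only that convolution with an object of $\ka_X(-(n+1-d))\boxtimes\ka_X$ kills anything in $\Db(X)\boxtimes\langle\ko_X,\dots,\ko_X(n+1-d)\rangle$ or in $\langle\ko_X(-(n+1-d)),\dots,\ko_X\rangle\boxtimes\Db(X)$, exactly as in Remark~\ref{rem:convP}. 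Tracking which twist $\ko_X(j)$ appears where is the bookkeeping core of the proof.

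The main obstacle I anticipate is keeping the range conditions straight: one must verify that for each $\ell\le d$ and each defect term, the relevant line-bundle twist really does fall into the orthogonal block, and this is where the precise hypothesis $d\le(n+2)/2$ enters (as opposed to merely $d\le n+2$ or $d\mid n+2$). The inequality $n+1-d\ge d-1$ is what ensures that twisting a generator of $\langle\ko_X(-(n+1-d)),\dots,\ko_X\rangle$ by $\ko_X(\ell)$ with $\ell\le d$ keeps it within $\langle\ko_X(-(n+1-d)),\dots,\ko_X(d)\rangle$, hence still killed by left-convolution with $P_0$ up to the semiorthogonality one needs. An alternative, perhaps cleaner route would be to mimic Kuznetsov's original argument \cite[Prop.~3.17]{KuzCY} at the level of kernels directly: build $j^*\ko_\Delta(\ell)$ from $j^*\ko_\Delta(\ell-1)$ by applying $-\otimes p_2^*\ko_X(1)$ followed by the projection $j^*$, and show this agrees with convolution by $P_1$ by comparing the two as operations $\Db(X\times X)\to\ka_X(-(n+1-d))\boxtimes\ka_X$. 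Either way, the naturality of the isomorphism comes for free since every map used is a canonical adjunction morphism or the canonical twist.
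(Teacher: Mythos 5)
Your overall strategy --- induct on $\ell$, convolve with $\ko_\Delta(1)$, and kill the defect terms by semiorthogonality --- is the same as the paper's, and your ``alternative, cleaner route'' at the end is in fact essentially what the paper does. But the central vanishing claim, as you state it, has a genuine gap. The defect terms coming from the cone of $\ko_\Delta\to P_0$ are built from objects $\ko_X(a)\boxtimes\ko_X(b)$ with $a\in[-(n+1-d),0]$ and $b\in[0,n+1-d]$, and convolving with $\ko_\Delta(1)$ on the appropriate side shifts the \emph{first} factor up by one. An object $\ko_X(a+1)\boxtimes F$ with $a+1>0$ is \emph{not} annihilated by $j^*$: the block that dies under $j^*$ on the first factor is exactly $\langle\ko_X(-(n+1-d)),\ldots,\ko_X\rangle$, so your claimed containment in $\langle\ko_X(-(n+1-d)),\ldots,\ko_X(d)\rangle$ buys you nothing (already $\ko_X(1)\boxtimes F$ survives). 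Likewise the range $\langle\ko_X,\ldots,\ko_X(n+1-d)\rangle$ you invoke is the second-factor block, not the first. So ``check term by term that every defect cone convolves to zero'' fails for the terms with $a=0$ unless you bring in additional information about which pairs $(a,b)$ actually occur (e.g.\ cohomology vanishings forcing $a+b\le 0$), which you have not done.

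The paper closes exactly this gap by a refinement that your proposal is missing: it shows that the cone $\ko_\Delta(\ell)'$ of $\ko_\Delta(\ell)\to j^*\ko_\Delta(\ell)$ lies in the span of $\langle\ko_X(-(n+1-d)),\ldots,\ko_X(-(n+1-d)+\ell-1)\rangle\boxtimes\Db(X)$ --- only the \emph{bottom} $\ell$ twists of the first factor --- together with $\Db(X)\boxtimes\langle\ko_X,\ldots,\ko_X(n+1-d)\rangle$. This is proved not by expanding $P_0$ into its mutation factors, but by comparing $j^*$ with the coarser projection $k^*$ onto $\langle\ko_X(-(n+1-d)),\ldots,\ko_X(-(n+1-d)+\ell-1)\rangle^\perp\boxtimes\ka_X$ and observing that the extra orthogonality $\Hom^*(p_1^*\ko_X(a),k^*\ko_\Delta(\ell))=0$ for $a=-(n+1-d)+\ell,\ldots,0$ holds automatically, because $E\mapsto i^*(E\otimes\ko_X(\ell))$ already kills $\langle\ko_X,\ldots,\ko_X(n+1-d-\ell)\rangle$. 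With this refined containment, convolving with $\ko_\Delta(1)$ shifts the first-factor window to $[-(n+1-d)+1,-(n+1-d)+\ell]$, which stays inside $[-(n+1-d),0]$ precisely when $\ell\le n+1-d$; under $d\le(n+2)/2$ this covers $\ell=0,\ldots,d-1$, which is where the hypothesis actually enters (your heuristic $n+1-d\ge d-1$ is the right inequality, but attached to the wrong containment). I recommend replacing your term-by-term expansion of $P_0$ with this $k^*$ comparison.
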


\begin{proof}
Consider the natural exact triangle $\ko_\Delta(\ell)'\to\ko_\Delta(\ell)\to j^*\ko_\Delta(\ell)$.
As in the proof of \cite[Prop.\ 3.8]{KuzHH} (cf.\ Example \ref{exa:id}, (i)), we shall show that $\ko_\Delta(\ell)'$, $\ell=0,\ldots,d-1$, is contained
in the subcategory spanned by $\langle\ko_X(-(n+1-d)),\ldots,\ko_X(-(n+1-d)+\ell-1)\rangle\boxtimes\Db(X)$ and $\Db(X)\boxtimes\langle\ko_X,\ldots,\ko_X(n+1-d)\rangle$.  For this consider the projection $k^*\colon\Db(X\times X)\to\langle\ko_X(-(n+1-d)),\ldots,\ko_X(-(n+1-d)+\ell-1)\rangle^\perp\boxtimes\ka_X$. It suffices to show that the kernel of $\ko_\Delta(\ell)\to k^*\ko_\Delta(\ell)$
is $\ko_\Delta(\ell)'$ or, equivalently, that $k^*\ko_\Delta(\ell)\cong j^*\ko_\Delta(\ell)$. 
Now, $\Hom^*(p_1^*\ko_X(a),k^*\ko_\Delta(\ell))=0$ holds for $a=-(n+1-d),\ldots,-(n+1-d)+\ell-1$ by definition of $k^*$
and for $a=-(n+1-d)+\ell,\ldots,0$, as $E\mapsto i^*(E\otimes\ko_X(\ell))$ is trivial on $\langle\ko_X,\ldots,\ko_X(n+1-d-\ell)\rangle$.


Next convolute the above exact triangle with $\ko_\Delta(1)$ from the left to obtain the exact triangle
$\ko_\Delta(1)\circ\ko_\Delta(\ell)'\to\ko_\Delta(\ell+1)\to\ko_\Delta(1)\circ j^*\ko_\Delta(\ell)$. As $\ko_\Delta(1)\circ\ko_\Delta(\ell)'$ is now contained in the span
of $\langle\ko_X(-(n+1-d)+1),\ldots,\ko_X(-(n+1-d)+\ell)\rangle\boxtimes\Db(X)$ and $\Db(X)\boxtimes\langle\ko_X,\ldots,\ko_X(n+1-d)\rangle$, it becomes trivial under $j^*$. This then implies that $j^*\ko_\Delta(\ell+1)\cong j^*(\ko_\Delta(1)\circ j^*\ko_\Delta(\ell))\cong P_1\circ P_\ell\cong P_{\ell+1}$ by induction. Note that the argument works as long as $-(n+1-d)+\ell\leq 0$ which under the assumption on $d$
holds for $\ell=0,\ldots,d-1$.
\end{proof}

\subsection{}\label{sec:Pell}
Consider a smooth hypersurface $X\subset \PP\coloneqq \PP^{n+1}$ of degree $d$. We write $\Delta_\PP\subset \PP\times\PP$
and $\Delta\coloneqq\Delta_X\subset X\times X$ for the two diagonals as well as for the corresponding closed immersions.

\begin{lem}
The pull-back of the structure sheaf $\ko_{\Delta_\PP}\in\Db(\PP\times\PP)$ under the
natural embedding $\varphi\colon X\times X\,\hookrightarrow \PP\times \PP$ is a complex
$\varphi^*\ko_{\Delta_\PP}\in \Db(X\times X)$ concentrated in degree $0$ and $-1$
with cohomology sheaves $\kh^0\cong \ko_{\Delta_X}$ and $\kh^{-1}\cong\ko_{\Delta_X}(-d)$.
\end{lem}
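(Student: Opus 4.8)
The statement is about the derived pullback $\varphi^*\ko_{\Delta_\PP}$ along the closed immersion $\varphi\colon X\times X\hookrightarrow\PP\times\PP$, where $X\subset\PP$ is the degree-$d$ hypersurface. The natural approach is to resolve $\ko_{\Delta_\PP}$ by a Koszul complex and then pull back. Concretely, $\Delta_\PP\subset\PP\times\PP$ is cut out by a regular section of $p_1^*\ko_\PP(1)\otimes p_2^*T_\PP$, or more simply one uses the Beilinson-type resolution; but the cleanest route here is to work locally and with the Koszul resolution of $\ko_{\Delta_\PP}$ as an $\ko_{\PP\times\PP}$-module associated to the section of a rank-$(n+1)$ bundle whose zero locus is $\Delta_\PP$. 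Then $L\varphi^*$ of this Koszul complex is again a Koszul-type complex on $X\times X$, and one computes its cohomology sheaves directly.

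First I would reduce to the following concrete situation. Let $Y\coloneqq X\times X\subset\PP\times\PP\eqqcolon Z$; this is a complete intersection of the two divisors $X\times\PP$ and $\PP\times X$, so $Y$ is cut out in $Z$ by a regular section of $p_1^*\ko_\PP(d)\oplus p_2^*\ko_\PP(d)$, giving a Koszul resolution of $\ko_Y$ on $Z$. Equivalently, $L\varphi^*\kf$ for $\kf$ on $Z$ is computed by tensoring a locally free resolution of $\kf$ with this Koszul complex. Since $\ko_{\Delta_\PP}$ is the structure sheaf of the diagonal, which is a regular (Cohen--Macaulay) subscheme, and since $\Delta_\PP\cap Y=\Delta_X$ scheme-theoretically with $\Delta_X$ of the expected codimension inside $Y$, the only possible nonvanishing derived pullbacks are $\kh^0$ and $\kh^{-1}$, with $\kh^0(L\varphi^*\ko_{\Delta_\PP})=\varphi^*\ko_{\Delta_\PP}=\ko_{\Delta_X}$ (ordinary pullback of a quotient) and $\kh^{-j}=\mathcal{T}or_j^{\ko_Z}(\ko_{\Delta_\PP},\ko_Y)$.

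The heart of the computation is thus to identify $\mathcal{T}or_1^{\ko_Z}(\ko_{\Delta_\PP},\ko_Y)$. I would do this via the Koszul resolution $[\,\ko_Z(-X\times\PP-\PP\times X)\to\ko_Z(-X\times\PP)\oplus\ko_Z(-\PP\times X)\to\ko_Z\,]$ of $\ko_Y$, restrict it to $\Delta_\PP\cong\PP$ (on which $\ko_Z(-X\times\PP)$ and $\ko_Z(-\PP\times X)$ both become $\ko_\PP(-d)$, and the two maps to $\ko_\PP$ are both multiplication by the equation $f$ of $X$, hence \emph{equal}), and take cohomology. Restricted to the diagonal the complex becomes $[\,\ko_\PP(-2d)\xrightarrow{(f,-f)}\ko_\PP(-d)^{\oplus2}\xrightarrow{(f,f)}\ko_\PP\,]$; since $f$ is a nonzerodivisor, $\mathcal{T}or_0=\ko_{\Delta_X}$, $\mathcal{T}or_1=\ker/\im$ at the middle term, which is $\{(a,b): f(a+b)=0\}/\{(fc,-fc)\}=\{(a,-a)\}/\{(fc,-fc)\}\cong\ko_\PP(-d)/(f)\cong\ko_{\Delta_X}(-d)$, and $\mathcal{T}or_2=0$. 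Transporting back along $\Delta_X\hookrightarrow\Delta_\PP$ identifies $\kh^{-1}\cong\ko_{\Delta_X}(-d)$, as claimed; here the twist $\ko_{\Delta_X}(-d)$ is read via the identification $\Delta_X\cong X$ and $\ko_X(-d)=\ko_X(-X)$ restricted.

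The main obstacle is bookkeeping the twists correctly: one must keep track of which copy of $\ko_\PP(1)$ the various line bundles refer to after restriction to the two diagonals, and check that the Koszul differentials genuinely become multiplication by $f$ (up to sign) so that the $\mathcal{T}or$ computation collapses as above; a clean way to avoid sign confusion is to note $\Delta_\PP\subset X\times\PP$ is exactly $\Delta_X$, so $\mathcal{T}or^{\ko_Z}_*(\ko_{\Delta_\PP},\ko_{X\times\PP})$ already gives $\ko_{\Delta_X}$ in degree $0$ and nothing else (the section cutting out $X\times\PP$ restricts to $0$ on $\Delta_\PP$ but the divisor $X\times\PP$ meets $\Delta_\PP$ in $\Delta_X$, which is a divisor in $\Delta_\PP$, hence no higher $\mathcal{T}or$), and then one only needs the single further $\mathcal{T}or$ against $\ko_{\PP\times X}$ over $\ko_{X\times\PP}$, i.e.\ the Koszul complex $[\ko_{\Delta_X}(-d)\xrightarrow{f=0}\ko_{\Delta_X}]$ for $f|_{\Delta_X}=0$, which immediately yields $\kh^0=\ko_{\Delta_X}$ and $\kh^{-1}=\ko_{\Delta_X}(-d)$ with no higher terms. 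Either route gives the result; I would present the second as it sidesteps the sign issue entirely.
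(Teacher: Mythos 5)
Your first route is exactly the paper's proof: resolve $\ko_{X\times X}$ by the Koszul complex $[\ko(-d,-d)\to\ko(-d,0)\oplus\ko(0,-d)\to\ko]$ on $\PP\times\PP$, restrict to $\Delta_\PP$, and read off the cohomology sheaves, with your explicit kernel-modulo-image computation of $\mathcal{T}or_1$ replacing the paper's equivalent cokernel description and your nonzerodivisor argument for $\mathcal{T}or_2=0$ replacing its support argument. The two-step factorization through $X\times\PP$ at the end is a pleasant cosmetic streamlining of the same computation (just be aware that your parenthetical appeal to ``expected codimension'' is not quite right --- the intersection is excess, which is precisely why $\kh^{-1}\neq 0$ --- but nothing in the actual argument relies on it).
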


\begin{proof}
We view the cohomology sheaves $\kh^i$ of $\varphi^*\ko_{\Delta_\PP}$ as
$\kh^i\cong \kh^i(\ko_{X\times X}\otimes_{\PP\times \PP}\ko_{\Delta_\PP})$.
They are  supported on $\Delta_X$ and can be computed 
by means of the locally free resolution
$$E\coloneqq\left[\ko_{\PP\times\PP}(-d,-d)\to\ko_{\PP\times \PP}(-d,0)\oplus\ko_{\PP\times \PP}(0,-d)\to \ko_{\PP\times \PP}\right]\congpf
\ko_{X\times X}\in \Db(\PP\times\PP).$$
Hence, $\kh^i\cong \kh^i(E\otimes_{\PP\times \PP}\ko_{\Delta_\PP})\cong \kh^i[\ko_{\Delta_\PP}(-2d)\to\ko_{\Delta_\PP}(-d)^{\oplus 2}\to\ko_{\Delta_\PP}]$ and, in particular, $\kh^i=0$ for $i\ne 0,-1.-2$. Moreover, $\kh^{-2}\subset \ko_{\Delta_\PP}(-2d)$
as a sheaf supported on the proper subscheme $\Delta\subset\Delta_\PP$ has to be trivial, too. Obviously, $\kh^0\cong\ko_{\Delta_X}$
and, therefore $\kh^{-1}\cong {\rm Coker}(\ko_{\Delta_\PP}(-2d)\,\hookrightarrow\ko_{\Delta_\PP}(-d))\cong\ko_{\Delta_X}(-d)$.
\end{proof}

The usual exact triangle $\kh^{-1}[1]\to\varphi^*\ko_{\Delta_\PP}\to\kh^0$ for a complex concentrated in degree $0,-1$
twisted by $\ko(d,0)$ (or, equivalently, $\ko(0,d)$)
becomes
\begin{equation}\label{eqn:defalpha}
\xymatrix{\varphi^*\ko_{\Delta_\PP}(d)\ar[r]&\ko_{\Delta_X}(d)\ar[r]^-\alpha&\ko_{\Delta_X}[2].}
\end{equation}

The proof of the following result is close in spirit to Kuznetsov's arguments in the proof of \cite[Lem.\ 4.2]{Kuz1}.

\begin{lem}\label{lem:trivialvarphi}
Assume $d\leq(n+2)/2$. Then under the left adjoint $j^*\colon\Db(X\times X)\to \ka_X(-(n+1-d))\boxtimes\ka_X$ of the
natural inclusion (see (\ref{eqn:jj})), $\varphi^*\ko_{\Delta_\PP}(d)$ becomes trivial, i.e.
$$j^*\varphi^*\ko_{\Delta_\PP}(d)\cong 0.$$
\end{lem}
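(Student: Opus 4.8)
The plan is to show that $\varphi^*\ko_{\Delta_\PP}(d)$ lies in the subcategory that $j^*$ kills, namely the span of $\langle\ko_X(-(n+1-d)),\ldots,\ko_X\rangle\boxtimes\Db(X)$ and $\Db(X)\boxtimes\langle\ko_X,\ldots,\ko_X(n+1-d)\rangle$. Equivalently, using Lemma \ref{lem:Pcont}, it suffices to check that the Fourier--Mukai functor $\Phi_{\varphi^*\ko_{\Delta_\PP}(d)}\colon\Db(X)\to\Db(X)$ both factors through $i^*$ (i.e. vanishes on $\langle\ko_X,\ldots,\ko_X(n+1-d)\rangle$) and has essential image disjoint from $\ka_X$, so that $j^*$ of the kernel is zero. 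So the first step is to identify this functor explicitly: since $\ko_{\PP\times\PP}(-d,-d)\to\ko_{\PP\times\PP}(-d,0)\oplus\ko_{\PP\times\PP}(0,-d)\to\ko_{\PP\times\PP}$ resolves $\ko_{X\times X}$ and pushes forward against $\ko_{\Delta_\PP}(d)$, one computes that $\varphi^*\ko_{\Delta_\PP}(d)$ is (a twist by $\ko(d,0)$ or $\ko(0,d)$ of) the Koszul-type complex $[\ko_{\Delta_\PP}(-2d)\to\ko_{\Delta_\PP}(-d)^{\oplus2}\to\ko_{\Delta_\PP}]$ restricted along $\Delta_X$; the associated FM functor sends $E\in\Db(X)$ to the restriction to $X$ of $j_{\PP}^*j_{\PP*}(E)\otimes\ko(d)$ where $j_\PP\colon X\hookrightarrow\PP$, i.e. to the cone-complex built from $E(d)$, $E$, and $E(-d)$ via the defining equation $f$ of $X$.

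The heart of the argument, following the pattern of the proof of Lemma \ref{lem:Pell} and of Kuznetsov's \cite[Lem.~4.2]{Kuz1}, is then a direct check on the generators. On the one hand, for $\ell=0,\ldots,n+1-d$ the object $\ko_X(\ell)$ is annihilated: applying $\Phi_{\varphi^*\ko_{\Delta_\PP}(d)}$ to $\ko_X(\ell)$ produces a complex assembled from $\ko_X(\ell+d)$, $\ko_X(\ell)^{\oplus?}$ and $\ko_X(\ell-d)$; using the Koszul resolution of $\ko_X$ inside $\PP$ one sees these pieces cancel, or more cleanly one invokes Lemma \ref{lem:Pcont}(ii) after checking $P\coloneqq\varphi^*\ko_{\Delta_\PP}(d)\in\ka_X(-(n+1-d))\boxtimes\Db(X)$ via the vanishing $\Hom^*(p_1^*\ko_X(-a)\otimes p_2^*F,P)=0$ for $a=0,\ldots,n+1-d$ and all $F$, which follows from the degree bound $d\le(n+2)/2$ (so that the relevant twists of $\ko_\PP$ have no cohomology). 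On the other hand, for the image-side condition one checks $P\in\Db(X)\boxtimes\langle\ko_X,\ldots,\ko_X(n+1-d)\rangle$ by the symmetric computation, using that $p_{2*}(p_1^*F\otimes P)$ is, up to shift, $i^*(F\otimes\ko_X(d))$ or a cone thereof built again from twists landing in $^\perp\ka_X$; here too $d\le(n+2)/2$ is what keeps the intermediate terms $\ko_X(\pm d)$ inside the exceptional block. Combining the two memberships gives $P\in\big(\ka_X(-(n+1-d))\boxtimes\Db(X)\big)\cap\big(\Db(X)\boxtimes\langle\ko_X,\ldots,\ko_X(n+1-d)\rangle\big)$, and since $j^*$ is the composite of the left projections onto $\ka_X(-(n+1-d))\boxtimes\Db(X)$ and onto $\Db(X)\boxtimes\ka_X$, this forces $j^*P\cong0$.

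The main obstacle I anticipate is the bookkeeping in the second membership: the Koszul complex $E$ has three terms, so $\varphi^*\ko_{\Delta_\PP}$ genuinely has a length-two amplitude (the two cohomology sheaves $\ko_{\Delta_X}$ and $\ko_{\Delta_X}(-d)$ computed in the preceding lemma), and one must track both simultaneously through the projection functors rather than a single sheaf as in Lemma \ref{lem:Pell}. Concretely, one should work with the triangle $\varphi^*\ko_{\Delta_\PP}(d)\to\ko_{\Delta_X}(d)\to\ko_{\Delta_X}[2]$ from (\ref{eqn:defalpha}) and argue that \emph{each} of $\ko_{\Delta_X}(d)$ and $\ko_{\Delta_X}[2]$ separately becomes trivial under the relevant projections in the appropriate range — the point being that $\ko_{\Delta_X}(d)=\ko_\Delta(d)$ with $d\le n+1-d$ lands in $^\perp\ka_X$ on the first factor (twisting by $\ko(d)$ is zero on $\ka_X$ precisely when the block $\langle\ko_X,\ldots,\ko_X(n+1-d-d)\rangle$ is nonempty, i.e. $d\le(n+1-d)$, which is the hypothesis), while $\ko_{\Delta_X}$ is handled exactly as $P_0$ in Example \ref{exa:id}(i). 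Assembling these via the triangle then yields $j^*\varphi^*\ko_{\Delta_\PP}(d)\cong0$.
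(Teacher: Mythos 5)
Your overall target --- showing that $P\coloneqq\varphi^*\ko_{\Delta_\PP}(d)$ lies in the left orthogonal of $\ka_X(-(n+1-d))\boxtimes\ka_X$, i.e.\ in the triangulated span of $\langle\ko_X(-(n+1-d)),\ldots,\ko_X\rangle\boxtimes\Db(X)$ and $\Db(X)\boxtimes\langle\ko_X,\ldots,\ko_X(n+1-d)\rangle$ --- is the right one, and it is exactly what the paper proves. But the concrete steps you propose for establishing it fail. The object $P$ sits in a triangle $\ko_\Delta[1]\to P\to\ko_\Delta(d)$, so $\Phi_P(E)$ is an extension of $E(d)$ by $E[1]$. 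In particular $\Phi_P(\ko_X(\ell))\ne 0$, so $P\notin\ka_X(-(n+1-d))\boxtimes\Db(X)$ and the claimed vanishing $\Hom^*(p_1^*\ko_X(-a)\otimes p_2^*F,P)=0$ is false; and $\Phi_P(k(x))$ is a nonzero object supported at the point $x$ (its $\kh^0$ is $k(x)$), so $P\notin\Db(X)\boxtimes\langle\ko_X,\ldots,\ko_X(n+1-d)\rangle$ either, since nonzero objects of that block have full support. Both memberships you want to intersect are therefore false: lying in the \emph{span} of two subcategories is strictly weaker than lying in either one, and $P$ genuinely only lies in the span. Your fallback in the last paragraph is worse: proving that $\ko_\Delta(d)$ and $\ko_\Delta[2]$ each become trivial under $j^*$ would show $j^*\ko_\Delta=0$, but $j^*\ko_\Delta=P_0$ is the identity kernel of $\ka_X$ (Example \ref{exa:id}, (i)) and $j^*\ko_\Delta(d)\cong P_d$ by Lemma \ref{lem:Pell}; neither vanishes unless $\ka_X=0$. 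Indeed the whole point of the lemma is that applying $j^*$ to the triangle (\ref{eqn:defalpha}) produces the isomorphism $P_d\cong P_0[2]$ between two \emph{nonzero} objects, precisely because the third vertex dies.

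The missing idea is a resolution of $P$ whose individual terms decouple the two factors. The paper uses the Beilinson--Koszul resolution $[\ko_\PP(-(n+1))\boxtimes\Omega^{n+1}_\PP(n+1)\to\cdots\to\ko_{\PP\times\PP}]\cong\ko_{\Delta_\PP}$, giving $\varphi^*\ko_{\Delta_\PP}(d)\cong[E_{n+1}\to\cdots\to E_0]$ with $E_i=\ko_X(d-i)\boxtimes\Omega^i_\PP(i)|_X$. For $i=0,\ldots,n+1-d$ the Euler sequence and its exterior powers place $\Omega^i_\PP(i)|_X$ in $\langle\ko_X,\ldots,\ko_X(n+1-d)\rangle$, so $E_i\in\Db(X)\boxtimes\langle\ko_X,\ldots,\ko_X(n+1-d)\rangle$; for $i=n+2-d,\ldots,n+1$ the hypothesis $d\le(n+2)/2$ forces $-(n+1-d)\le d-i\le 0$, so $E_i\in\langle\ko_X(-(n+1-d)),\ldots,\ko_X\rangle\boxtimes\Db(X)$. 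Each term is killed by $j^*$, hence so is their iterated extension $P$. The three-term resolution of $\ko_{X\times X}$ on $\PP\times\PP$ that you invoke computes the cohomology sheaves of $P$ (that is the preceding lemma), but it keeps everything supported on the diagonal and cannot separate the two factors, which is why your argument never finds a place to use the degree bound correctly.
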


\begin{proof} The Koszul resolution
$$\left[\ko_\PP(-(n+1))\boxtimes\Omega_\PP^{n+1}(n+1)\to\ldots\to\ko_\PP(-1)\boxtimes\Omega_\PP(1)\to\ko_{\PP\times\PP}\right]\congpf\ko_{\Delta_\PP}$$
allows one to compute $\varphi^*\ko_{\Delta_\PP}(d)$ as
$[E_{n+1}\to\ldots\to E_0]$, where $E_i\coloneqq\ko_X(d-i)\boxtimes\Omega_\PP^i(i)|_X$.

Now, from the Euler
sequence $0\to\Omega_\PP(1)\to\ko_\PP^{\oplus n+2}\to \ko_\PP(1)\to 0$ and its alternating powers
$0\to \Omega_\PP^i(i)\to\ko_\PP^{\oplus {n+2\choose i}}\to\Omega_\PP^{i-1}(i)\to 0$ one deduces
$E_i\in\Db(X)\boxtimes \langle \ko_X,\ldots,\ko_X(n+1-d)\rangle$ for $i=0,\ldots,n+1-d$. As
$d\leq (n+2)/2$, the remaining $E_i$, $i=n+2-d,\ldots, n+1$ are all contained
in $\langle\ko_X(-(n+1-d)),\ldots,\ko_X\rangle\boxtimes\Db(X)$. Altogether, this shows
that $\varphi^*\ko_{\Delta_\PP}(d)$ is contained in the left orthogonal of $\ka_X(-(n+1-d))\boxtimes\ka_X$ and hence $j^*\varphi^*\ko_{\Delta_\PP}(d)\cong0$.
\end{proof}
Combining this with Lemma \ref{lem:Pell} yields the next result, which again is just the kernel
version of a result of Kuznetsov \cite{Kuz1,KuzCY}.

\begin{cor}\label{cor:Pell}
For $d\leq(n+2)/2$ one has $$P_d\cong j^*\ko_{\Delta_X}(d)\cong j^*\ko_{\Delta_X}[2] \text{ and }
(d)\cong [2]$$
and, more generally, $$P_{d\ell+a}\cong j^*\ko_\Delta(a)[2\ell]\text{ and } (d\ell+a)\cong(a)\circ[2\ell]$$
for all $\ell\geq0$ and $0\leq a<d$.\qed
\end{cor}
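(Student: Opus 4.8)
The plan is to bootstrap from Lemmas \ref{lem:Pell} and \ref{lem:trivialvarphi} together with the triangle (\ref{eqn:defalpha}), and then to propagate the answer for $\ell=1$ to all $\ell$ by convolution.

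First I would settle the case $\ell=0$, $a=d$. By Lemma \ref{lem:Pell} applied with $\ell=d$ one has $P_d\cong j^*\ko_{\Delta_X}(d)$. Now apply the triangulated functor $j^*$ to the exact triangle (\ref{eqn:defalpha}); its first term $j^*\varphi^*\ko_{\Delta_\PP}(d)$ vanishes by Lemma \ref{lem:trivialvarphi}, so $j^*(\alpha)$ becomes an isomorphism $j^*\ko_{\Delta_X}(d)\cong j^*(\ko_{\Delta_X}[2])$. Since $j^*$ commutes with the shift and $j^*\ko_{\Delta_X}=P_0$ by definition, this reads $P_d\cong P_0[2]$. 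Passing to the induced Fourier--Mukai functors and using $\bar\Phi_{P_0}\cong\id$ (Example \ref{exa:id}, (i)) yields $(d)\cong[2]$.

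For the general formula I would induct on $\ell\geq0$, the case $\ell=0$ being precisely Lemma \ref{lem:Pell}, valid for $0\leq a\leq d$ and in particular for $0\leq a<d$. For the step, associativity of convolution gives $P_{d(\ell+1)+a}\cong P_d\circ P_{d\ell+a}$, and by the inductive hypothesis $P_{d\ell+a}\cong j^*\ko_\Delta(a)[2\ell]$, which lies in $\ka_X(-(n+1-d))\boxtimes\ka_X$. Using $P_d\cong P_0[2]$ and the fact, recorded in Remark \ref{rem:convP}, that convolution with $P_0$ acts as the identity on $\ka_X(-(n+1-d))\boxtimes\ka_X$, one computes
$$P_{d(\ell+1)+a}\cong P_0[2]\circ\bigl(j^*\ko_\Delta(a)[2\ell]\bigr)\cong\bigl(P_0\circ j^*\ko_\Delta(a)\bigr)[2(\ell+1)]\cong j^*\ko_\Delta(a)[2(\ell+1)],$$
which closes the induction and gives $P_{d\ell+a}\cong j^*\ko_\Delta(a)[2\ell]$. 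The functor statement then follows either from this or, more directly, from $(d)\cong[2]$: indeed $(d\ell+a)=(1)^{d\ell+a}\cong\bigl((1)^{d}\bigr)^{\ell}\circ(1)^{a}\cong(a)\circ[2\ell]$.

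There is essentially no serious obstacle left here: the two preceding lemmas carry the geometric content, and the corollary is a formal consequence of them. The only points requiring (routine) care are that $j^*$, being triangulated, commutes with shifts and sends triangles to triangles; that convolution is compatible with shifts in each argument; and that $P_0$ is a convolution idempotent acting as the identity on $\ka_X(-(n+1-d))\boxtimes\ka_X$ --- all immediate from the set-up of Section \ref{sec:Pell} and Remark \ref{rem:convP}.
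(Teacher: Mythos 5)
Your argument is correct and is exactly the paper's intended proof: the corollary is stated with an immediate \qed because it follows from applying $j^*$ to the triangle (\ref{eqn:defalpha}) (whose first term dies by Lemma \ref{lem:trivialvarphi}), combining with Lemma \ref{lem:Pell} for $P_d\cong j^*\ko_\Delta(d)$, and propagating by convolution with $P_0$ acting as the identity on the subcategory (Remark \ref{rem:convP}). The only blemish is a labelling slip (the case $P_d$ is $\ell=1$, $a=0$ in the statement's indexing, not ``$\ell=0$, $a=d$''), which does not affect the argument.
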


\begin{remark}\label{rem:reallyequ}
Note that this, \emph{a posteriori}, shows that $(1)\colon\ka_X\to\ka_X$ is indeed an auto-equivalence.
\end{remark}

\begin{remark}\label{rem:newpr} 
For the reader's convenience we briefly mention that at least for cubic fourfolds $X\subset\PP^5$ the arguments presented
so far already ensure that $\ka_X$ is a two-dimensional Calabi--Yau category. Indeed, in this case Lemma \ref{lem:Pell}
applies to $\ell=n+2-d$ and hence  for all $E,F\in\ka_X$ one finds $\Hom_{\ka_X}(i^*(E\otimes\ko_X(n+2-d)),F)\cong\Hom_{\Db(X)}(E\otimes\ko_X(n+2-d),F)\cong \Hom_{\Db(X)}(F,E[n])^*$, which can be read
as $S_{\ka_X}\circ(n+2-d)\cong[n]$. This together with $(d)\cong[2]$ yields the assertion.
\end{remark}
\subsection{}\label{sec:Atiyah}
We shall need an alternative description of the isomorphism $j^*\ko_\Delta(d)\cong j^*\ko_\Delta[2]$ that involves
the tangent bundle $\kt_X$.

The normal bundle sequence $0\to\kt_X\to\kt_\PP|_X\to\ko_X(d)\to 0$ is encoded by the
boundary map $\ko_X(d)\to\kt_X[1]$, which is certainly non-trivial for $d>2$ and $n>2$, which covers all cases
of interest to us. Taking direct
images under the diagonal morphism yields
\begin{equation}\label{eqn:boundary}
\ko_\Delta(d)\to\Delta_*\kt_X[1].
\end{equation}

The boundary morphism of the short exact sequence
$0\to\Delta_*\Omega_X\to\ko_{X\times X}/{\mathcal I}_\Delta^2\to\ko_\Delta\to 0$
is the universal Atiyah class ${\rm At}\colon \ko_\Delta\to \Delta_*\Omega_X[1]$.
Taking exterior powers, it yields a natural  map $\bigoplus {\rm At}^p\colon\ko_\Delta\to\Delta_*\bigoplus_{p=0}^n\Omega^p_X[p]$,
whose adjoint $$\Delta^*\ko_\Delta\congpf\bigoplus_{p=0}^n\Omega^p_X[p]$$ is known
to be an isomorphism \cite{Cal,Marka}. We shall rather work in the dual setting \cite[Sec.\ 8]{KuzHH}:
$$\bigoplus_{p=0}^n{\bigwedge}^p\kt_X[-p]\congpf\Delta^!\ko_\Delta.$$
Taking direct image under the diagonal and composing with the natural inclusion of $\kt_X[-1]$ on the left and
with the adjunction $\Delta_*\Delta^!\to{\rm id}$ on the right yields
\begin{equation}\label{eqn:Atcomp}
\Delta_*\kt_X[1]\to\Delta_*\bigoplus_{p=0}^n{\bigwedge}^p\kt_X[2-p]\congpf\Delta_*\Delta^!\ko_\Delta[2]\to\ko_\Delta[2].
\end{equation}

Now, composing (\ref{eqn:boundary}) and (\ref{eqn:Atcomp}) yields
a map
$$\beta\colon \ko_\Delta(d)\to \Delta_*\kt_X[1]\to\ko_\Delta[2],$$
which can be compared to $\alpha$ in (\ref{eqn:defalpha}).

\begin{lem}\label{lem:alphabeta}
The two maps $\alpha,\beta\colon \ko_\Delta(d)\to \Delta_*\kt_X[1]\to\ko_\Delta[2]$ coincide (up to non-trivial scaling).
\end{lem}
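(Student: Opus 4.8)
\textbf{Proof strategy for Lemma \ref{lem:alphabeta}.}
The plan is to show that both $\alpha$ and $\beta$ factor through the boundary map (\ref{eqn:boundary}) in the same way, so that the only freedom left is a scalar. First I would observe that $\beta$ is \emph{defined} to be the composite (\ref{eqn:boundary}) followed by (\ref{eqn:Atcomp}); so the real content is to produce a factorization of $\alpha$ through $\ko_\Delta(d)\to\Delta_*\kt_X[1]$ and to match the two second legs. The natural way to get such a factorization is to resolve $\varphi^*\ko_{\Delta_\PP}$ more carefully: the triangle (\ref{eqn:defalpha}) comes from the two-term complex with cohomologies $\ko_{\Delta_X}(d)$ in degree $0$ and $\ko_{\Delta_X}$ in degree $-1$, and the extension class of that complex (after the twist by $\ko(d,0)$) \emph{is} $\alpha$. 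I would compute this extension class by restricting the Koszul-type resolution of $\ko_{\Delta_\PP}$ to $X\times X$ and comparing with the conormal/normal bundle sequences: the second fundamental form relating $\kt_X$, $\kt_\PP|_X$ and $\ko_X(d)$ should appear precisely as the first-order part of the diagonal restriction, which is what feeds (\ref{eqn:boundary}).

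Concretely, the key steps in order would be: (1) identify $\varphi^*\ko_{\Delta_\PP}$ with the two-term truncation of $\Delta^*_X$ of the normal-bundle data, so that the connecting map $\ko_\Delta(d)\to\ko_\Delta[2]$ factors as $\ko_\Delta(d)\to\Delta_*\kt_X[1]\to\ko_\Delta[2]$, the first arrow being (exactly, up to scalar) the direct image (\ref{eqn:boundary}) of the normal-bundle boundary class; (2) identify the second arrow with the Atiyah-class composite (\ref{eqn:Atcomp}) — this is the step where I would invoke the standard compatibility between the Atiyah class of $X$, the Atiyah class of $\PP$, and the normal bundle, i.e.\ that restricting $\mathrm{At}_{\PP}$ to $X$ and projecting to $\Omega_X$ recovers $\mathrm{At}_X$ plus the second fundamental form term; (3) conclude that $\alpha$ and $\beta$ agree up to a nonzero scalar by a dimension count on $\Hom^0_{\Db(X\times X)}(\ko_\Delta(d),\ko_\Delta[2])$ (equivalently $\Hom^2_X$ of the relevant sheaves via $\Delta^!$), showing this space is one-dimensional, so that any two nonzero maps that both factor through $\Delta_*\kt_X[1]$ via nonzero maps must be proportional. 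Alternatively, and perhaps more cleanly, one can avoid the dimension count by checking directly that the two factorizations through $\Delta_*\kt_X[1]$ coincide on each leg.

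I expect the main obstacle to be step (2): making precise the identification of the map $\Delta_*\kt_X[1]\to\ko_\Delta[2]$ coming from the $\varphi^*\ko_{\Delta_\PP}$ picture with the Atiyah-class map (\ref{eqn:Atcomp}). The issue is that the former arises from the geometry of the embedding $X\subset\PP$ (the Koszul resolution of $\Delta_\PP$, restricted), while the latter is intrinsic to $X$ (HKR-type isomorphism $\Delta^!\ko_\Delta\cong\bigoplus\bigwedge^p\kt_X[-p]$). Bridging these requires knowing that the HKR isomorphism is compatible with the embedding in the expected way, which is where I would lean on the references \cite{Cal,Marka,KuzHH} and on Kuznetsov's computation in \cite[Lem.\ 4.2]{Kuz1}, whose spirit (as the paper notes) is exactly this kind of argument. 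A clean way to organize it is to work functorially: both $\alpha$ and $\beta$ are natural transformations built out of standard exact sequences, so it suffices to check the claim after applying $\Delta^!$ (or $\Delta^*$) and comparing the induced maps $\kt_X[-1]\to\bigwedge^{\ge 0}\kt_X$, where the $\wedge^1$-component of each is the relevant boundary class and one reads off equality up to scalar.
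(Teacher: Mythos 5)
Your step (3) is exactly the argument the paper runs: compute $\Hom_{X\times X}(\ko_\Delta(d),\ko_\Delta[2])\cong H^0(X,\Delta^!\ko_\Delta(-d)[2])\cong\bigoplus_p H^{2-p}(X,\bigwedge^p\kt_X(-d))$, observe that only the $p=1$ summand $H^1(X,\kt_X(-d))$ survives and that it is one-dimensional by the normal bundle sequence, and conclude that $\alpha$ and $\beta$ are proportional. Your steps (1) and (2), which you correctly identify as the hard part, are essentially a proposal to reprove the compatibility of the universal Atiyah class with the Kodaira--Spencer class of the embedding; the paper does not carry this out but simply cites \cite[Thm.\ 2.10]{HT} for it, offering the dimension count as the self-contained alternative. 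So you have both of the paper's routes in hand, and the dimension count makes (1) and (2) unnecessary.

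There is, however, one gap in your execution of the dimension-count route. You write that ``any two nonzero maps that both factor through $\Delta_*\kt_X[1]$ via nonzero maps must be proportional,'' but nonvanishing of the two legs of a factorization does not imply nonvanishing of the composite, and proportionality in a one-dimensional $\Hom$ is vacuous unless both composites are actually nonzero (if one of them were zero the lemma would fail, since the statement asserts they agree up to \emph{non-trivial} scaling and $\alpha$ is used later as an isomorphism after applying $j^*$). You therefore still need to check $\alpha\neq 0$ and $\beta\neq 0$ directly. The paper does this as follows: $\alpha\neq 0$ because $j^*\alpha$ is an isomorphism, which is immediate from Lemma \ref{lem:trivialvarphi} ($j^*$ kills $\varphi^*\ko_{\Delta_\PP}(d)$ in the defining triangle (\ref{eqn:defalpha})); and $\beta\neq 0$ because its vanishing would force the adjunction $\Delta_*\Delta^!\ko_\Delta\to\ko_\Delta$ to vanish on the relevant summand, given that the first leg (\ref{eqn:boundary}) is the nontrivial boundary class of the normal bundle sequence. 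Adding these two short checks closes the argument.
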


\begin{proof}
This can be seen as a consequence of \cite[Thm.\ 2.10]{HT}. Indeed, $\beta$ is by construction the composition of
the universal Atiyah class with the Kodaira--Spencer class for the embedding $X\subset\PP^{n+1}$ which coincides with the universal obstruction class $\alpha$.

Alternatively, one can show that $\Hom_{X\times X}(\ko_\Delta(d),\ko_\Delta[2])$ is just one-dimensional and that
both maps $\alpha$ and $\beta$ are non-zero. Indeed,  $\Hom_{X\times X}(\ko_\Delta(d),\ko_\Delta[2])\cong
H^0(X,\Delta^!\ko_\Delta(-d)[2])\cong H^0(X,\bigoplus_{p=0}^n\bigwedge^p\kt_X(-d)[2-p])$. For degree reasons, only
$H^2(X,\ko(-d))$, $H^1(X,\kt_X(-d))$, and $H^0(X,\bigwedge^2\kt_X(-d))$ contribute to the direct sum. The first and third
cohomology groups are obviously trivial and due to
the normal bundle sequence the second one is one-dimensional. 
Clearly, $\alpha\ne0$, as $j^*\alpha$ is an isomorphism by Lemma \ref{lem:trivialvarphi}.
Similarly one checks that  $\beta\ne0$, as otherwise the adjunction $\Delta^!\Delta_*\ko_\Delta\to\ko_\Delta$ would be zero.
Hence, $\alpha$ and $\beta$ differ at most by a non-trivial scalar.
\end{proof}

\begin{remark}
The interpretation of $\alpha$ as the universal obstruction class has the following geometric consequence
for objects in $\ka_X$: Non-trivial objects $E\in\ka_X$ are maximally  obstructed, i.e.\ they do not even deform to first order to the ambient
projective space. Indeed, $\alpha$ as a morphism between Fourier--Mukai kernels applied to any object $E\in\Db(X)$
yields the  obstruction  $o(E)\in\Ext^2(E\otimes\ko(d), E)$ to extend $E$ to the first order neighbourhood
of $X$ in $\PP$, cf.\ \cite{HT}. However, for $E\in \ka_X$ 
this class, via adjunction, yields the isomorphism $E(d)\cong i^*(E\otimes\ko(d))\congpf E[2]$ and so $o(E)\ne0$ for all
non-trivial $E\in\ka_X$.
\end{remark}

Composing $\ko_X^{\oplus n+2}\twoheadrightarrow\kt_\PP(-1)|_X$, coming from the restriction of the Euler
sequence, with the natural projection $\kt_\PP(-1)|_X\twoheadrightarrow\ko_X(d-1)$ in the normal bundle
sequence yields a map
\begin{equation}\label{eqn:gamma}
\xymatrix{\gamma\colon\ko_X^{\oplus n+2}\ar[r]&\kt_\PP(-1)|_X\ar[r]&\ko_X(d-1).}
\end{equation}
This map is induced by the partial derivatives $\partial_if\in H^0(X,\ko_X(d-1))$ of the equation $f\in k[x_0,\ldots,x_{n+1}]_d$
defining $X\subset\PP^{n+1}$.

\begin{cor}\label{cor:partialzero}
The induced map $j^*\Delta_*\gamma$ in the product category
$\ka_X(-(n+1-d))\boxtimes\ka_X\subset\Db(X\times X)$ is trivial:
$$0=j^*\Delta_*\gamma\colon j^*\ko_\Delta^{\oplus n+2}\to j^*\ko_\Delta(d-1).$$
\end{cor}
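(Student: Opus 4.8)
The plan is to factor the map $\gamma$ of \eqref{eqn:gamma} through the composition that defines $\beta$, and then invoke the vanishing already established in Lemma \ref{lem:trivialvarphi} (equivalently, the coincidence $\alpha=\beta$ from Lemma \ref{lem:alphabeta}). Concretely, $\gamma$ is built from the restricted Euler surjection $\ko_X^{\oplus n+2}\twoheadrightarrow\kt_\PP(-1)|_X$ followed by the normal-bundle projection $\kt_\PP(-1)|_X\twoheadrightarrow\ko_X(d-1)$. Twisting by $\ko_X(1)$, the second map becomes $\kt_\PP|_X\twoheadrightarrow\ko_X(d)$, which is exactly the surjection in the normal-bundle sequence $0\to\kt_X\to\kt_\PP|_X\to\ko_X(d)\to0$ whose boundary is the Kodaira--Spencer/obstruction class $\ko_X(d)\to\kt_X[1]$ appearing in \eqref{eqn:boundary}. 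So the first step is to make precise the elementary homological statement that, after applying $\Delta_*$ and composing with \eqref{eqn:Atcomp}, the map $\Delta_*\gamma$ (suitably twisted) fits into a commutative square with $\beta\colon\ko_\Delta(d)\to\ko_\Delta[2]$; more precisely one shows that $\Delta_*\gamma$ composed with the inclusion of $\ko_\Delta(d)$ as the last term of the Koszul-type presentation, and then with $\beta$, is the same as first applying $\beta$.

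The second step is to identify the right diagram. The cleanest route is: the surjection $\ko_X^{\oplus n+2}\twoheadrightarrow\kt_\PP(-1)|_X$ from the Euler sequence has kernel $\ko_X(-1)$, so $\gamma$ sits in an exact sequence which, after $\Delta_*$, twist by $\ko_X(1)$, and composition with the Atiyah-class map \eqref{eqn:Atcomp}, realises $\Delta_*\gamma(1)$ as a lift of $\beta$ through $\Delta_*\kt_X[1]$. Since by Lemma \ref{lem:alphabeta} we have $\alpha=\beta$ (up to scalar) and by Lemma \ref{lem:trivialvarphi} the map $j^*\alpha$ is an isomorphism with $j^*\varphi^*\ko_{\Delta_\PP}(d)\cong0$, one knows precisely how $j^*$ interacts with this factorisation. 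The key point I would exploit is that $j^*\Delta_*\gamma$ is a morphism in the product category $\ka_X(-(n+1-d))\boxtimes\ka_X$ between $j^*\ko_\Delta^{\oplus n+2}\cong P_0^{\oplus n+2}$ and $j^*\ko_\Delta(d-1)\cong P_{d-1}$, and that $\Hom$ in this category can be computed via the Calabi--Yau structure of Corollary \ref{cor:KuzThm}: by Serre duality in $\ka_X(-(n+1-d))\boxtimes\ka_X$ one dualises to $\Hom(P_{d-1},P_0^{\oplus n+2}[2(n+2)(d-2)/d])^*$, and a degree count (using $P_0\cong\ko_\Delta$-projected and $P_{d-1}\cong j^*\ko_\Delta(d-1)$ from Lemma \ref{lem:Pell}) shows any such map factors through the obstruction class, which $j^*$ kills.

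The main obstacle I anticipate is the bookkeeping in the first step: making the factorisation of $\Delta_*\gamma$ through $\Delta_*\kt_X[1]$ genuinely commute on the level of Fourier--Mukai kernels in $\Db(X\times X)$, rather than just on cohomology or on the functor level, since the whole paper's strategy insists on the kernel formulation. One must check that the Euler-sequence surjection and the normal-bundle surjection compose in the expected way with the Atiyah-class morphism $\ko_\Delta\to\Delta_*\Omega_X[1]$ dualised as in \eqref{eqn:Atcomp}, i.e.\ that the composite $\ko_\Delta(d)\to\Delta_*\kt_X[1]\to\ko_\Delta[2]$ obtained from $\gamma$ agrees with $\beta$. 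Once that square commutes, the vanishing is immediate: $j^*$ applied to $\Delta_*\gamma$ factors through $j^*$ applied to a map out of $j^*\varphi^*\ko_{\Delta_\PP}(d)\cong0$, or equivalently through $j^*\beta = j^*\alpha$ which is already understood. I would present the argument in the form: \emph{by Lemma \ref{lem:alphabeta} it suffices to observe that $\gamma$ factors the relevant map, and by Lemma \ref{lem:trivialvarphi} that map dies under $j^*$.}
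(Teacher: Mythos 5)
Your first step is right and matches the paper: since $\alpha=\beta$ (Lemma \ref{lem:alphabeta}) and $\beta$ factors through the boundary map of the normal bundle sequence, the composition $\alpha(-1)\circ\Delta_*\gamma$ vanishes because already $\kt_\PP(-1)|_X\to\ko_X(d-1)\to\kt_X(-1)[1]$ is zero. Hence $\Delta_*\gamma$ lifts along the triangle (\ref{eqn:defalpha}) twisted by $\ko_X(-1)$ to a morphism $\delta\colon\ko_\Delta^{\oplus n+2}\to\varphi^*\ko_{\Delta_\PP}(d-1)$, and one must show $j^*\delta=0$.

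But here your argument has a genuine gap: you claim the vanishing is now "immediate" because the map factors through $j^*\varphi^*\ko_{\Delta_\PP}(d)\cong 0$. The relevant object is $\varphi^*\ko_{\Delta_\PP}(d-1)$, not $\varphi^*\ko_{\Delta_\PP}(d)$, and Lemma \ref{lem:trivialvarphi} does \emph{not} kill it. Running the Koszul resolution with the terms $E_i'=E_i\otimes\ko(-1,0)$, all of them still die under $j^*$ \emph{except} the last one, $E_{n+1}'\cong\ko(d-n-2,-1)$, so that $j^*\varphi^*\ko_{\Delta_\PP}(d-1)\cong j^*E_{n+1}'[n+1]\neq 0$. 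The corollary therefore does not follow formally from Lemmas \ref{lem:alphabeta} and \ref{lem:trivialvarphi}; one still has to check that the composition $\ko_\Delta^{\oplus n+2}\to\varphi^*\ko_{\Delta_\PP}(d-1)\to E_{n+1}'[n+1]$ vanishes, which the paper does by the explicit computation $\Ext^{n+1}_{X\times X}(\ko_\Delta,E_{n+1}')\cong H^{n-1}(X,\ko_X(d-n-1))^*=0$ via Serre duality. This cohomological input is the actual content of the second half of the proof and is absent from your proposal.

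Your fallback suggestion --- dualising $\Hom(j^*\ko_\Delta^{\oplus n+2},P_{d-1})$ via Serre duality for the Calabi--Yau category of Corollary \ref{cor:KuzThm} and arguing by a "degree count" --- does not repair this. First, Corollary \ref{cor:KuzThm} requires $d\mid(n+2)$, whereas the corollary is needed (in Proposition \ref{prop:JacHH}) under the weaker hypothesis $d\leq(n+2)/2$; second, $\Hom(P_0,P_{d-1})=L_{d-1}(X)$ is in general nonzero (it receives $J_{d-1}(X)$), so no degree count can show that an arbitrary element of it vanishes --- one must use the specific fact that $\Delta_*\gamma$ is given by the partial derivatives $\partial_i f$, which is exactly what the lift to $\varphi^*\ko_{\Delta_\PP}(d-1)$ and the final $\Ext$-vanishing encode.
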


\begin{proof} First note that the composition $\alpha(-1)\circ \Delta_*\gamma\colon\ko_\Delta^{\oplus n+2}\to\ko_\Delta(d-1)\to\ko_\Delta(-1)[2]$ is trivial. Indeed, as $\alpha=\beta$ and $\beta$ factors through the boundary map $\ko_X(d-1)\to\kt_X(-1)[1]$ of the normal bundle
sequence,
this follows from the observation that already the composition $\kt_\PP(-1)|_X\to\ko_X(d-1)\to\kt_X(-1)[1]$ is trivial. Hence, $\Delta_*\gamma$
factors through a morphism $\delta\colon\ko_\Delta^{\oplus n+2}\to\varphi^*\ko_{\Delta_\PP}(d-1)$ 
(use the triangle (\ref{eqn:defalpha}) tensored by by $\ko_X(-1)$) and it suffices to show that $j^*\delta=0$.

Now, using the notation from the proof of Lemma \ref{lem:trivialvarphi}, we let $E_i'\coloneqq E_i\otimes\ko(-1,0)$. Then we still
have $E_i'\in \Db(X)\boxtimes\langle\ko_X,\ldots,\ko_X(n+1-d)\rangle$ for $i=0,\ldots,n+1-d$ and $E_i'\in\langle\ko_X(-(n+1-d)),\ldots,\ko_X\rangle\boxtimes\Db(X)$ for $i=n+2-d,\ldots, n$. Only the last one $E_{n+1}'\cong\ko(d-1-(n+1),-1)$ does not vanish under $j^*$. Hence, the pull-back
under $j^*$ of the natural
map $\varphi^*\ko_{\Delta_\PP}(d-1)\to E_{n+1}'[n+1]$ becomes an isomorphism $j^*\varphi^*\ko_{\Delta_\PP}(d-1)\congpf j^* E_{n+1}'[n+1]$.
Therefore, in order to prove the assertion, it suffices to show that the composition $\ko_\Delta^{\oplus n+2}\to\varphi^*\ko_{\Delta_\PP}(d-1)\to E_{n+1}'[n+1]$ is trivial, which follows from $\Ext^{n+1}_{X\times X}(\ko_\Delta,E_{n+1}')\cong H^{n-1}(X,\ko_X(d-n-1))^*=0$ using
Serre duality.
\end{proof}

 See Section \ref{sec:MF} for an interpretation of his result using the category of graded matrix factorizations.

\section{Extended Hochschild cohomology of hypersurfaces}
We define the Hochschild cohomology $\HH^*(\ka_X,(1))$ of the category $\ka_X$ endowed
with the degree shift functor $(1)$ associated with any
smooth hypersurface $X\subset\PP^{n+1}$. It is intimately related to the usual even Hochschild cohomology $\HH^*(\ka_X)$, but
incorporates also the degree shift functor. This section also contains the comparison of the Jacobian ring $J(X)$ with $\HH^*(\ka_X,(1))$ (cf.\ Theorem \ref{thm:main1}). We work again over an arbitrary field of characteristic zero.

\subsection{} As before, we let $X\subset \PP=\PP^{n+1}$ be a smooth hypersurface defined by a homogeneous
polynomial $f\in k[x_0,\ldots,x_{n+1}]_d$ of degree $d$. 
We set $\sigma\coloneqq (n+2)(d-2)$ and define $$L(X)\coloneqq \bigoplus_{\ell=0}^\sigma L_\ell(X)=\bigoplus_{\ell=0}^\sigma\Hom(P_0,P_\ell)$$
(with $\Hom$ taken in the full subcategory $\ka_X(-(n+1-d))\boxtimes\ka_X\subset\Db(X\times X)$)
with its natural ring structure defined by composition. More precisely, by
applying convolution with $P_\ell$ one obtains a natural map
$\Hom(P_0,P_{\ell'})\to\Hom(P_\ell\circ P_0,P_{\ell}\circ P_{\ell'})\cong\Hom(P_\ell,P_{\ell+\ell'})$,
which then yields
$$\Hom(P_0,P_\ell)\times\Hom(P_0,P_{\ell'})\to\Hom(P_0,P_\ell)\times\Hom(P_\ell,P_{\ell+\ell'})\to
\Hom(P_0,P_{\ell+\ell'})$$
with the convention that the multiplication is trivial as soon as $\ell+\ell'$ exceeds $\sigma$.
Standard arguments show that this endows $L(X)$ with the structure of a graded commutative
ring.

Next, consider the  natural map
\begin{equation}\label{eqn:ringone}\small
H^0(\PP,\ko_\PP(1))\cong H^0(X,\ko_X(1))\congpf\Hom(\ko_\Delta,\ko_\Delta(1))\to\Hom (j^*\ko_\Delta, j^*\ko_\Delta(1))=L_1(X).\end{equation}

\begin{lem}\label{lem:O(1)}
The homomorphism (\ref{eqn:ringone}) is surjective.
\end{lem}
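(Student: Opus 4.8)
The plan is to analyze the map $H^0(X,\ko_X(1))\to L_1(X)=\Hom(j^*\ko_\Delta,j^*\ko_\Delta(1))$ by comparing the $\Hom$-group downstairs in $\Db(X\times X)$ with the $\Hom$-group upstairs in the admissible subcategory $\ka_X(-(n+1-d))\boxtimes\ka_X$. Since $j^*$ is the left projection onto that subcategory, for any $Q\in\Db(X\times X)$ and any $R\in\ka_X(-(n+1-d))\boxtimes\ka_X$ there is a functorial isomorphism $\Hom_{\Db(X\times X)}(Q,j_*R)\cong\Hom(j^*Q,R)$. Applying this with $Q=\ko_\Delta$ and $R=P_1=j^*\ko_\Delta(1)$ (using Lemma \ref{lem:Pell} for $\ell=1$, valid since $d\leq(n+2)/2$) identifies $L_1(X)$ with $\Hom_{\Db(X\times X)}(\ko_\Delta,j^*\ko_\Delta(1))$, and the map (\ref{eqn:ringone}) becomes post-composition with the adjunction morphism $\ko_\Delta(1)\to j^*\ko_\Delta(1)$ applied to $\Hom(\ko_\Delta,\ko_\Delta(1))$.

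First I would use the exact triangle $\ko_\Delta(1)'\to\ko_\Delta(1)\to j^*\ko_\Delta(1)$ defining the left projection. Applying $\Hom_{\Db(X\times X)}(\ko_\Delta,-)$ yields a long exact sequence, and surjectivity of (\ref{eqn:ringone}) follows once we know that $\Hom_{\Db(X\times X)}(\ko_\Delta,\ko_\Delta(1)')$ surjects onto nothing obstructing — more precisely it suffices that the connecting map lands in a vanishing group, i.e. that $\Hom^1_{\Db(X\times X)}(\ko_\Delta,\ko_\Delta(1)')=0$, or alternatively that the composite $\Hom(\ko_\Delta,\ko_\Delta(1))\to\Hom(\ko_\Delta,j^*\ko_\Delta(1))$ is already onto because the next term $\Hom^1(\ko_\Delta,\ko_\Delta(1)')$ vanishes. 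So the real task is to control $\Ext^*_{X\times X}(\ko_\Delta,\ko_\Delta(1)')$ in low degrees, where $\ko_\Delta(1)'$ lies in the subcategory spanned by $\langle\ko_X,\ldots,\ko_X(n+1-d)\rangle\boxtimes\Db(X)$ (using the explicit description of the projection as in the proof of Lemma \ref{lem:Pell}, the case $\ell=1$ gives that $k^*\ko_\Delta(1)\cong j^*\ko_\Delta(1)$ with kernel supported appropriately). Concretely, $\ko_\Delta(1)'$ is an extension of objects of the form $\ko_X(a)\boxtimes G$ with $0\le a\le n+1-d$, so $\Ext^i_{X\times X}(\ko_\Delta,\ko_X(a)\boxtimes G)\cong\Ext^i_X(\ko_X(a)\dual\otimes\Delta^!(\,\cdot\,),\ldots)$ — better, by adjunction $\Ext^i_{X\times X}(\Delta_*\ko_X,\ko_X(a)\boxtimes G)\cong\Ext^i_X(\ko_X, \Delta^!(\ko_X(a)\boxtimes G))$ and $\Delta^!(\ko_X(a)\boxtimes G)\cong\ko_X(a)\otimes G\otimes(\bigoplus_p\bigwedge^p\kt_X[-p])$ using the known formula for $\Delta^!\ko_\Delta$. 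One then checks degree by degree that the relevant $\Ext^0$ and $\Ext^1$ groups contributing to the obstruction vanish, using $H^j(X,\ko_X(a))=0$ for $0<a<d$ and $j=1,\dots$ and the corresponding vanishing for the twisted wedge-powers of the tangent bundle, which follow from Bott-type vanishing on $\PP^{n+1}$ restricted to $X$ together with the normal bundle sequence.

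An alternative, possibly cleaner route I would also consider: work directly with the description $P_1\cong P_0\circ\ko_\Delta(1)\circ[\ko\to\ko_\Delta]$ from (\ref{eqn:P1Conv}) and the factorization of $P_0$ through the tangent bundle from Lemma \ref{lem:alphabeta} and Corollary \ref{cor:Pell}; the generator of $L_1(X)$ that is "extra" (not coming from $H^0(\ko_X(1))$) would have to come from the $\Delta_*\kt_X$-term, but Corollary \ref{cor:partialzero} says the relevant map built from the partial derivatives $\partial_i f$ dies in the product category, which is exactly the statement that no such extra class survives. So surjectivity of (\ref{eqn:ringone}) is morally the reflection of Corollary \ref{cor:partialzero}: the cokernel of $H^0(\ko_X(1))\to L_1$ would be detected by a $\kt_X$-contribution, and that contribution is killed by $j^*$.

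The main obstacle I anticipate is the bookkeeping in the $\Ext$-computation: identifying exactly which graded pieces $\ko_X(a)\boxtimes G$ (with $G$ a wedge power of $\kt_\PP|_X$ twisted appropriately, coming from the Euler/Koszul resolution) appear in $\ko_\Delta(1)'$, and then verifying that all of $\Ext^0_{X\times X}(\ko_\Delta,\ko_\Delta(1)')$ beyond the image and $\Ext^1_{X\times X}(\ko_\Delta,\ko_\Delta(1)')$ vanish. This requires the hypothesis $d\le(n+2)/2$ (so that the splitting of the Koszul complex into a "left" part and a "right" part behaves as in Lemma \ref{lem:trivialvarphi}) and careful use of the cohomology vanishing $H^{>0}(\PP^{n+1},\Omega^p_\PP(p+a))=0$ for the relevant range of $a$, pulled back to $X$ via the hyperplane-section sequence. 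Once the vanishing is in place, surjectivity is immediate from the long exact $\Hom$-sequence.
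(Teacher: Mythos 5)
Your first route is essentially the paper's own strategy for the easy half of the lemma, and it does work --- but only for $d\geq 4$. The paper applies $\Hom(\ko_\Delta,-)$ to the triangle $P_1'\to\ko_\Delta(1)\to P_1$ and shows, via the explicit convolution description (\ref{eqn:P1Conv}) of $P_1$ as a complex of box products, that $\Hom(\ko_\Delta,P_1'[i])\cong\Hom(\ko_X,\Delta^*P_1'\otimes\omega_X^*[i-n])=0$ for $i<d-2$. This gives injectivity for $d\geq3$ and bijectivity for $d\geq4$. The genuine gap in your proposal is the claim that ``one then checks degree by degree that the relevant $\Ext^0$ and $\Ext^1$ groups \dots vanish'': for $d=3$ the required vanishing of $\Ext^1_{X\times X}(\ko_\Delta,\ko_\Delta(1)')$ is simply false at the level of the obvious spectral sequence. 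Concretely, for a cubic the leftmost term of $P_1'$ sits in cohomological degree $-(n-1)$ and equals $H^0(\ko_X(1))^{\otimes n-1}\otimes\ko(-(n-2),0)$, and $\Ext^n(\ko_\Delta,\ko(-(n-2),0))\cong H^0(X,\ko_X(-(n-2))\otimes\omega_X^{-1})\cong H^0(X,\ko_X(1))\neq0$ contributes to $\Ext^1(\ko_\Delta,P_1')$. So the long-exact-sequence argument cannot close for cubics --- which is the only case the paper actually needs for the Torelli application.

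For $d=3$, $n=4$ the paper therefore switches to a completely different argument: Serre duality for the four-dimensional Calabi--Yau category $\ka_X(-2)\boxtimes\ka_X$ (Corollary \ref{cor:KuzThm}) identifies $L_1(X)^*$ with $H^0(X,\Delta^*j^*\ko_\Delta\otimes\ko_X(2))$, and a page-long explicit computation of $\Delta^*j^*\ko_\Delta$ via the three mutation triangles (\ref{eqn:jDelta}) shows this space has dimension at most $h^0(\ko_X(1))=6$; combined with the injectivity from the degree argument this forces surjectivity. Your ``alternative route'' via Corollary \ref{cor:partialzero} is only a heuristic: that corollary kills the classes $\partial_if\in H^0(\ko_X(d-1))$ in $L_{d-1}(X)$ and is what makes the map factor through $J(X)$ (Proposition \ref{prop:JacHH}); it says nothing about whether $H^0(\ko_X(1))$ hits all of $L_1(X)$, and ``the cokernel would be detected by a $\kt_X$-contribution'' is not substantiated. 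To repair your proof you would either need to show that the nonzero $E_1$-entries above die in the spectral sequence (which amounts to a computation of comparable difficulty), or adopt the paper's dual dimension count.
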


\begin{proof} (i)  We shall first show that (\ref{eqn:ringone}) is bijective for $d\geq 4$ and injective for $d\geq 3$.
Consider the exact triangle $P_1'\to \ko_\Delta(1)\to P_1$. Using (\ref{eqn:P1Conv}), a direct computation
shows that
$$P_1\cong\left[H^0(X,\ko_X(1))^{\otimes m+1}\otimes\ko(-m,0)\to\cdots\to\bigoplus_{i=-1}^m\ko(-i,i+1)\to\ko_\Delta(1)\right],$$
where one uses that all pull-backs, direct images and tensor products are in fact underived due to $H^{>0}(X,\ko(i))=0$.
Hence,
$$\Hom(\ko_\Delta,P_1'[i])\cong\Hom(\ko_X,\Delta^*P_1'\otimes\omega_X^*[i-n])=0$$
for $i-n<-m-1$ or, equivalently, for $i<d-2$. Therefore, $\Hom(\ko_\Delta,P_1')=0=\Hom(\ko_\Delta,P_1'[1])$
for $d\geq 4$, which proves the bijectivity of  (\ref{eqn:ringone}) in these cases. For $d=3$ we
still have $\Hom(\ko_\Delta,P_1')=0$, which proves at least the injectivity.\footnote{Note that for $d\mid (n+2)$ and $d<n+2$, the injectivity also follows from Proposition \ref{prop:JacHH2}.}

(ii)  Surjectivity also holds for $d=3$, but since the calculation is
long and we do not have any applications for $n>4$, we only give the proof in
the case $n=4$.

By definition and Serre duality for the Calabi--Yau category $\ka_X(-2)\boxtimes\ka_X$ (which is of dimension four),
$L_1(X)=\Hom(j^*\ko_\Delta,j^*\ko_\Delta(1))\cong\Ext^4(j^*\ko_\Delta(1),j^*\ko_\Delta)^*\cong \Ext^4(\ko_\Delta(1),j^*\ko_\Delta)^*
\cong H^4(X,\Delta^!j^*\ko_\Delta\otimes\ko_X(-1))^*\cong H^0(X,\Delta^*j^*\ko_\Delta\otimes\ko_X(2))^*$,
where we use that $\Delta^!\cong(\omega_X^*\otimes(~))\circ\Delta^*\circ [-4]$.

In order to compute $\Delta^*j^*\ko_\Delta$, consider the left projections
$j_0^*\colon\Db(X\times X)\to\Db(X)\boxtimes\langle\ko_X(2)\rangle^\perp$
and $j_1^*\colon\Db(X\times X)\to\Db(X)\boxtimes\langle\ko_X(1),\ko_X(2)\rangle^\perp$.
Then $j^*\ko_\Delta$ is recursively described by the exact triangles
\begin{equation}\label{eqn:jDelta}
\begin{array}{rcl}
\ko_\Delta\circ\ko(-2,2)\to&\ko_\Delta&\to j_0^*\ko_\Delta,\\
j_0^*\ko_\Delta\circ\ko(-1,1)\to &j_0^*\ko_\Delta&\to j_1^*\ko_\Delta,\\
j_1^*\ko_\Delta\circ\ko\to &j_1^*\ko_\Delta&\to j^*\ko_\Delta.
\end{array}
\end{equation}
Applying $\Delta^*$ yields the diagram of exact triangles
$$\xymatrix{\Delta^*j_0^*\ko_\Delta\ar[d]&&\Delta^*(j_0^*\circ\ko)[1]\ar[d]\\
\Delta^*j_1^*\ko_\Delta\ar[d]\ar[r]&\Delta^*j^*\ko_\Delta\ar[r]&\Delta^*(j_1^*\ko_\Delta\circ\ko)[1]\ar[d]\\
\Delta^*(j_0^*\ko_\Delta\circ\ko(-1,1))[1]&&\Delta^*(j_0^*\ko_\Delta\circ\ko(-1,1)\circ\ko)[2].}$$
The computation of the four corners is straightforward. For example, the lower-left corner is $\Delta^*(j_0^*\ko_\Delta\otimes\ko(-1,1))\cong \Omega_\PP|_X[1]$,
as it is the cone of the natural map
$$H^0(X,\ko_X(1))\otimes\ko_X(-1)\cong\Delta^*(\ko_\Delta\circ \ko(-2,2)\circ\ko(-1,1))\to \Delta^*(\ko_\Delta\circ\ko(-1,1))\cong\ko_X.$$
Similarly, the lower-right corner is $\Delta^*(j_0^*\ko_\Delta\circ\ko(-1,1)\circ\ko)[2]\cong(\Omega_\PP|_X(-1)\otimes H^0(X,\ko_X(1)))[3]$.
Indeed, it is the $[2]$-shift of the cone of the map
$$H^0(X,\ko_X(1))^{\otimes 2}\otimes\ko_X(-2)\cong\Delta^*(\ko_\Delta\circ\ko(-2,2)\circ\ko(-1,1)\circ\ko)\to\Delta^*(\ko_\Delta\circ\ko(-1,1)\circ\ko),$$
which is the evaluation $H^0(X,\ko_X(1))\otimes\ko_X(-2)\to\ko_X(-1)$  tensored with $H^0(X,\ko_X(1))$.
For the computation of the upper-left corner use $\Delta^*\ko_\Delta\cong\bigoplus\Omega_X^p[p]$ and $\Delta^*(\ko_\Delta\circ\ko(-2,2))\cong\ko_X$
to conclude that $\Delta^*(j_0^*\ko_\Delta)\cong\bigoplus_{p\geq1}\Omega_X^p[p]$. Finally, the upper-right corner
is the $[1]$-shift  of the cone of
$$H^0(X,\ko_X(2))\otimes\ko_X(-2)\cong\Delta^*(\ko_\Delta\circ\ko(-2,2)\circ\ko)\to\Delta^*(\ko_\Delta\circ\ko)\cong\ko_X$$
and so $\Delta^*(j_0^*\circ\ko)[1]\cong\Omega_{\PP'}|_X[2]$, where $X\subset \PP'=\PP^{20}$ is the second Veronese embedding.

Now, in order to compute $H^0(X,\Delta^*j^*\ko_\Delta\otimes\ko_X(2))$, use the induced diagram of triangles
$$\xymatrix{\bigoplus_{p\geq1}H^p(\Omega_X^p\otimes\ko_X(2))\ar[d]&&H^2(\Omega_{\PP'}|_X\otimes\ko_X(2))\ar[d]\\
H^0(\Delta^*j_1^*\ko_\Delta\otimes\ko_X(2))\ar[d]\ar[r]&H^0(\Delta^*j^*\ko_\Delta\otimes\ko_X(2))\ar[r]&H^1(\Delta^*(j_1^*\ko_\Delta\circ\ko)\otimes\ko_X(2))\ar[d]\\
H^2(\Omega_\PP|_X\otimes\ko_X(2))&&H^3(\Omega_\PP|_X(1)).}$$
Using the Euler and conormal bundle sequences (and their exterior powers) and standard vanishing results, one easily shows that the only non-trivial term in the four corners
is the term $H^2(X,\Omega_X^2\otimes\ko_X(2))\cong H^3(X,\Omega_X(-1))\cong H^4(X,\ko_X(-4))\cong H^0(X,\ko_X(1))^*$
in the left-upper corner. 

This shows that  in (\ref{eqn:ringone}) one has $ \dim H^0(X,\ko_X(1))\geq \dim  L_1(X)$
and together with the injectivity from (i) this proves the surjectivity.
\end{proof}

\begin{remark}
Note that the analogous maps $H^0(X,\ko(\ell))\to L_\ell(X)$ for arbitrary $\ell$ are usually
not surjective. For example, in the case of the cubic fourfold $L_3(X)$ is of dimension $22$, but
by Proposition \ref{prop:JacHH}
$H^0(X,\ko_X(3))\to L_3(X)$ factors through $J_3(X)\cong H^1(X,\kt_X)$ (see below) which is of dimension $20$.
\end{remark}

Next, consider the homomorphism of graded rings
\begin{equation}\label{eqn:ringhomo}
R\coloneqq k[x_0,\ldots,x_{n+1}]\to L(X)
\end{equation}
induced by (\ref{eqn:ringone}) and using the commutativity of $L(X)$.

The following observation essentially proves the first
half of Theorem \ref{thm:main1}, see also Corollary \ref{cor:Thm1}.
For related results in this direction compare \cite[Prop.\ 5.14]{BFK}.

\begin{prop}\label{prop:JacHH}
If $d\leq(n+2)/2$, the ring homomorphism (\ref{eqn:ringhomo}) factors through a graded ring homomorphism from the Jacobian 
ring $J(X)=R/(\partial_if)$  to $L(X)$:
\begin{equation}\label{eqn:RJL}
\xymatrix{R(X)\ar@{->>}[r]&J(X)\ar[r]^-\pi& L(X).}
\end{equation}
\end{prop}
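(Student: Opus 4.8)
The plan is to show that the composite $R\to L(X)$ kills each partial derivative $\partial_i f\in R_{d-1}$, so that it factors through $J(X)=R/(\partial_i f)$ as a \emph{graded} ring map. Concretely, one must check that the image of $\partial_i f$ under $R_{d-1}\to L_{d-1}(X)=\Hom(P_0,P_{d-1})$ is zero for each $i=0,\ldots,n+1$. Since the ring structure on $L(X)$ is induced by composition (convolution), and the generators $x_0,\ldots,x_{n+1}$ map via (\ref{eqn:ringone}) to the images in $L_1(X)$ of the sections $x_i\in H^0(X,\ko_X(1))\cong\Hom(\ko_\Delta,\ko_\Delta(1))$, the element $\partial_i f\in R_{d-1}$ maps to the image in $L_{d-1}(X)$ of the morphism $\partial_i f\colon\ko_\Delta\to\ko_\Delta(d-1)$ (obtained as the $(d-1)$-fold composition of the relevant $x_j$'s, which equals multiplication by the polynomial $\partial_i f$, using that all the relevant tensor products and pushforwards along the diagonal are underived). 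Thus I must show $j^*\bigl(\Delta_*(\partial_i f)\bigr)=0$ in $\ka_X(-(n+1-d))\boxtimes\ka_X$.

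The key point is to recognize the collection $(\partial_0 f,\ldots,\partial_{n+1}f)$ as the components of the map $\gamma$ from (\ref{eqn:gamma}): indeed $\gamma\colon\ko_X^{\oplus n+2}\to\ko_X(d-1)$ is exactly the map with components $\partial_i f$, by the sentence following (\ref{eqn:gamma}). Applying $\Delta_*$ and then $j^*$, Corollary \ref{cor:partialzero} tells us precisely that $j^*\Delta_*\gamma=0$, i.e.\ the map $j^*\ko_\Delta^{\oplus n+2}\to j^*\ko_\Delta(d-1)$ vanishes. Restricting to the $i$-th summand, this gives $j^*\Delta_*(\partial_i f)=0$ for every $i$, which is exactly the vanishing needed. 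Hence each $\partial_i f$ maps to $0\in L_{d-1}(X)$, and since $(\partial_i f)$ is the ideal they generate and $L(X)$ is a commutative ring, the whole ideal $(\partial_i f)\subset R$ is sent to $0$; therefore $R\to L(X)$ factors as $R\epi J(X)\mor[\pi]L(X)$ with $\pi$ a graded ring homomorphism, as claimed.

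The one subtlety to spell out carefully is the identification of the image of $\partial_i f$ in $L_{d-1}(X)$ with $j^*\Delta_*(\partial_i f)$: one needs that composing the generators $x_j$ in $L(X)$ and then recognizing the resulting degree-$(d-1)$ element really does correspond, under the natural surjections $H^0(X,\ko_X(\ell))\to L_\ell(X)$, to the honest multiplication map $\ko_\Delta\to\ko_\Delta(d-1)$ given by the polynomial $\partial_i f$ — that is, the diagram relating the ring map $R\to\bigoplus_\ell H^0(X,\ko_X(\ell))$ and the map $R\to L(X)$ commutes. This is formal from the definitions of the multiplication on $L(X)$ (convolution with $P_\ell$) together with $P_\ell\cong j^*\ko_\Delta(\ell)$ for $\ell\le d$ (Lemma \ref{lem:Pell}) and the compatibility of $j^*$ with composition; I do not expect any real obstacle here, only bookkeeping. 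Once this identification is in place, Corollary \ref{cor:partialzero} does all the work, and the factorization is immediate.
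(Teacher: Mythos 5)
Your proposal is correct and follows exactly the paper's route: the factorization is deduced from Corollary \ref{cor:partialzero} (vanishing of $j^*\Delta_*\gamma$, whose components are the $\partial_i f$), with Lemma \ref{lem:Pell} supplying the identification $P_{d-1}\cong j^*\ko_\Delta(d-1)$ needed to match the element of $L_{d-1}(X)$ with $j^*\Delta_*(\partial_i f)$. The bookkeeping you flag at the end is precisely what the paper also leaves implicit.
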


\begin{proof}  
This is an immediate consequence of Corollary \ref{cor:partialzero}, which claims that all partial derivatives
$\partial_if\in R_{d-1}$ of the equation $f$ defining $X$ vanish under (\ref{eqn:ringhomo}).
At this point one uses Lemma \ref{lem:Pell} to ensure that $P_{d-1}\cong j^*\ko_\Delta(d-1)$, for which the assumption on $d$ is needed.
\end{proof}

\subsection{} In this section we in addition assume that $d\mid(n+2)$ with
$d<n+2$. Then, in particular, Kuznetsov's Theorem \ref{thm:KuzCY}, Corollary \ref{cor:KuzThm}, Lemma \ref{lem:Pell}, 
and Corollary \ref{cor:Pell} all apply to our situation. It turns out that
$L(X)$ and the Jacobian ring $J(X)$ are both  Gorenstein rings of the same top
degree, cf.\ Remark \ref{rem:noaccident}.

\begin{prop}\label{prop:summ}
The ring $L(X)$ is a finite-dimensional Gorenstein ring  of degree $\sigma= (n+2)(d-2)$. More precisely, one has:
{\rm (i)} $L_0(X)\cong\HH^0(\ka_X)\cong k$;
{\rm (ii)}  $L_{d\ell}(X)=\Hom(P_0,P_{d\ell})\cong\Hom(j^*\ko_\Delta,j^*\ko_\Delta[2\ell])\cong\HH^{2\ell}(\ka_X)$,
which is of dimension one for $d\ell=\sigma$; and 
{\rm (iii)} Composition induces a non-degenerate pairing
$$ L_\ell(X)\times L_{\sigma-\ell}(X)\to L_\sigma(X)\cong k.$$
\end{prop}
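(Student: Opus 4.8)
The statement packages together three facts about $L(X)=\bigoplus_{\ell=0}^{\sigma}\Hom(P_0,P_\ell)$, and the plan is to prove them using the dictionary between $L(X)$ and the Hochschild cohomology of the Calabi--Yau category $\ka_X(-(n+1-d))\boxtimes\ka_X$, together with Corollary \ref{cor:Pell}. First I would identify, for every $\ell$ divisible by $d$, say $\ell=d\cdot m$, the group $L_{d m}(X)=\Hom(P_0,P_{dm})$. By Corollary \ref{cor:Pell} we have $P_{dm}\cong j^*\ko_\Delta[2m]$, and since $P_0=j^*\ko_\Delta$ acts as the identity of $\ka_X$, the space $\Hom(P_0,P_{dm})=\Hom_{\ka_X(-(n+1-d))\boxtimes\ka_X}(j^*\ko_\Delta, j^*\ko_\Delta[2m])$ is by definition $\HH^{2m}(\ka_X)$ (via the standard identification of the Hochschild cohomology of $\ka_X$ with self-$\Ext$ of the projection of the diagonal, as in \cite{KuzHH}). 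This gives (ii), and (i) is the case $m=0$, where $\HH^0(\ka_X)\cong k$ because $\ka_X$ is connected (it is a Calabi--Yau category with a single summand). For the statement that $L_\sigma(X)\cong\HH^{2(n+2)(d-2)/d}(\ka_X)$ is one-dimensional: $\ka_X$ is Calabi--Yau of dimension $N=(n+2)(d-2)/d$ by Theorem \ref{thm:KuzCY}, so by Serre duality $\HH^N(\ka_X)=\Hom(j^*\ko_\Delta,j^*\ko_\Delta[N])\cong\Hom(j^*\ko_\Delta,j^*\ko_\Delta)^*\cong\HH^0(\ka_X)^*\cong k$, and $2m=N$ here since $\sigma=dN$.

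\textbf{The Gorenstein pairing (iii).} This is the crux. The composition pairing $L_\ell(X)\times L_{\sigma-\ell}(X)\to L_\sigma(X)\cong k$ should be identified with a Serre-duality pairing. The idea is to use that convolution with $P_{\sigma-\ell}$ sends $\Hom(P_0,P_\ell)$ to $\Hom(P_{\sigma-\ell},P_\sigma)$, so the pairing becomes the composition map
$$\Hom(P_0,P_\ell)\times\Hom(P_{\sigma-\ell},P_\sigma)\to\Hom(P_{\sigma-\ell},P_\sigma)\circ\text{stuff},$$
but the clean way is this: by Corollary \ref{cor:Pell}, writing $\ell=dq+a$ with $0\le a<d$ we have $P_\ell\cong P_a[2q]$ where $P_a=j^*\ko_\Delta(a)$, and $P_{\sigma-\ell}\cong P_{d-a}[2(\sigma/d - q)-2]=P_{d-a}[N-2q-2]$ up to reindexing (one checks $\sigma-\ell=d(N-1-q)+(d-a)$ when $a\neq 0$, and $=d(N-q)$ when $a=0$). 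Thus $L_\ell(X)\cong\Hom(j^*\ko_\Delta,j^*\ko_\Delta(a)[2q])$ and $L_{\sigma-\ell}(X)\cong\Hom(j^*\ko_\Delta,j^*\ko_\Delta(d-a)[N-2q])$ (using $P_0\cong j^*\ko_\Delta$ and the fact that $j^*$ is compatible with shifts). The pairing into $L_\sigma\cong\Hom(j^*\ko_\Delta,j^*\ko_\Delta[N])$ is composition after using $j^*\ko_\Delta(a)\circ j^*\ko_\Delta(d-a)\cong j^*\ko_\Delta(d)\cong j^*\ko_\Delta[2]$, which is exactly the isomorphism $(d)\cong[2]$. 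Having rewritten the pairing as
$$\Hom_{\mathcal C}(E,F[i])\times\Hom_{\mathcal C}(E,F^\vee[N-i])\longrightarrow\Hom_{\mathcal C}(E,E[N])\cong k$$
inside the Calabi--Yau category $\mathcal C\coloneqq\ka_X(-(n+1-d))\boxtimes\ka_X$ of dimension $2N$ — wait, here $\mathcal C$ is $2N$-dimensional, so I must instead keep the identification with $\HH^*$ of $\ka_X$, which is $N$-dimensional — the pairing is the Yoneda-composition pairing on $\HH^*(\ka_X)$ valued in $\HH^N(\ka_X)\cong k$, and non-degeneracy is precisely Serre duality for the Calabi--Yau category $\ka_X$: for any object $A$ of the enhancement representing the identity functor (the projection of the diagonal), $\Hom(A,A[i])\times\Hom(A,A[N-i])\to\Hom(A,A[N])\cong\Hom(A,A)^*\cong k$ is perfect. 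The subtlety is that the twists $(a)$ intervene, so the pairing is really between $\Hom(A,A(a)[j])$ and $\Hom(A(a)[j],A[N])\cong\Hom(A,A(a)[j])^*$ — but $A(a)=(a)$ as a functor is an auto-equivalence of $\ka_X$ (Corollary \ref{cor:Pell}, Remark \ref{rem:reallyequ}), so Serre duality still applies verbatim to the object $A(a)$ and gives the perfect pairing.

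\textbf{Main obstacle and finite-dimensionality.} Finite-dimensionality of $L(X)$ follows once each $L_\ell(X)=\Hom(P_0,P_\ell)$ is identified with a finite-dimensional $\Hom$-space between bounded complexes of coherent sheaves on the smooth projective variety $X\times X$, which it is by general theory. That $L(X)$ is concentrated in degrees $0\le\ell\le\sigma$ is built into the definition (multiplication is declared trivial above $\sigma$), but one must check it is \emph{consistent}, i.e.\ that no information is lost — this is where one invokes that the would-be top piece is already one-dimensional and the pairing below it is perfect, forcing $L_\ell(X)=0$ for $\ell>\sigma$ is not needed since those are not defined; rather one must ensure $L_\ell(X)\neq 0$ can't force a longer ring, but since $\pi\colon J(X)\to L(X)$ is surjective (to be shown in Corollary \ref{cor:Thm1}) and $J(X)$ has top degree $\sigma$, this is automatic. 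The genuinely delicate point I expect to fight with is \emph{bookkeeping the shifts and twists} when passing through Corollary \ref{cor:Pell}: matching $\ell$ with $(q,a)$, matching $\sigma-\ell$ with the complementary pair, and checking that the composition pairing in $L(X)$ (defined via convolution $P_0\to P_\ell\to P_\ell\circ P_{\ell'}\cong P_{\ell+\ell'}$) really does go over, under these identifications, to the honest Yoneda/Serre-duality pairing on $\HH^*(\ka_X)$ twisted by the equivalence $(a)$ — in particular that the isomorphism $P_d\cong j^*\ko_\Delta[2]$ used to land in $L_\sigma$ is the same one (up to the harmless scalar of Lemma \ref{lem:alphabeta}) that one uses in the two complementary factors. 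Once the shifts are organized, non-degeneracy is a formal consequence of Serre duality for the Calabi--Yau category $\ka_X$ and the fact that all the twist functors $(a)$ are auto-equivalences.
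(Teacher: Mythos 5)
Your reduction of (i) and (ii) to Corollary \ref{cor:Pell} and the definition of Hochschild cohomology is the same as the paper's, but there is a genuine error at the crux of the argument, namely in the top degree and in the non-degeneracy of the pairing (iii): you apply Serre duality to the wrong category in the wrong degree. You first write the pairing correctly inside $\mathcal{C}\coloneqq\ka_X(-(n+1-d))\boxtimes\ka_X$, observe that $\mathcal{C}$ is Calabi--Yau of dimension $2N$ (where $N=\sigma/d=\dim\ka_X$), and then retreat to ``the Yoneda pairing on $\HH^*(\ka_X)$ valued in $\HH^{N}(\ka_X)\cong k$, non-degenerate by Serre duality for $\ka_X$''. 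This is false on two counts. First, $L_\sigma(X)=\Hom(P_0,P_{dN})\cong\Hom(P_0,P_0[2N])=\HH^{2N}(\ka_X)$, not $\HH^{N}(\ka_X)$; the latter is the \emph{middle} Hochschild cohomology and is typically large (for a cubic fourfold $N=2$ and $\HH^2(\ka_X)$ is $22$-dimensional, as the paper itself records in Remark \ref{rem:HHnotL}). The same slip underlies your formula $\HH^{N}(\ka_X)\cong\HH^0(\ka_X)^*$ for the one-dimensionality of the top piece. Second, Serre duality for $\ka_X$ cannot be applied to $P_0$: the projected diagonal kernel is an object of $\mathcal{C}$, not of $\ka_X$, so the only duality available for the spaces $\Hom(P_0,P_\ell)$ is that of $\mathcal{C}$, which has dimension $2N$, not $N$.

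The repair is precisely the statement you wrote down and then abandoned, and it is the paper's argument via Corollary \ref{cor:KuzThm}: since $P_\sigma\cong P_0[2N]$ by Corollary \ref{cor:Pell} and $\mathcal{C}$ is Calabi--Yau of dimension $2N$, one gets $L_\sigma(X)=\Hom(P_0,P_0[2N])\cong\Hom(P_0,P_0)^*\cong k$, and the composition pairing --- rewritten, via the equivalence ``convolution with $P_\ell$'' of $\mathcal{C}$, as $\Hom(P_0,P_\ell)\times\Hom(P_\ell,P_0[2N])\to\Hom(P_0,P_0[2N])$ --- is literally the Serre pairing of $\mathcal{C}$ and hence perfect. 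In particular no decomposition $\ell=dq+a$, no passage to $\HH^*(\ka_X)$ twisted by $(a)$, and none of the shift bookkeeping you flag as the delicate point is needed; your intermediate shifts also contain slips (e.g.\ $P_{\sigma-\ell}\cong j^*\ko_\Delta(d-a)[2(N-q-1)]$, not $P_{d-a}[N-2q-2]$), which the product-category argument bypasses entirely.
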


\begin{proof}
Recall from Theorem  \ref{thm:KuzCY} that $\sigma/d$ is the dimension of the Calabi--Yau category $\ka_X$.

In (i) the first isomorphism is by definition and the second one follows from \cite[Cor.\ 7.5]{KuzHH}. Alternatively,
use the arguments in the proof of Lemma \ref{lem:O(1)}.

Next, (ii) follows from Corollary \ref{cor:Pell} and the definition of Hochschild cohomology, see \cite{KuzHH}.
That in (ii) the space is of dimension one for $d\ell=\sigma$ can either be deduced from Serre duality for the 
$2\sigma/d$-dimensional
Calabi--Yau category 
$\ka_X(-(n+1-d))\boxtimes\ka_X$  (cf.\ Corollary \ref{cor:KuzThm}) or from \cite[Prop.\ 5.3]{KuzCY} and \cite[Cor.\ 7.5]{KuzHH}
showing that $\HH^{2\dim\ka_X}(\ka_X)\cong\HH_{-\dim \ka_X}(\ka_X)\cong\HH_{-\dim\ka_X}(X)\cong\bigoplus H^{-\dim\ka_X+p}(X,\Omega_X^p)$,
which can be shown to be one-dimensional (using Hirzebruch's formula for $\chi(\Omega_X^p)$, see \cite{SGA},
and the Lefschetz hyperplane theorem).
Yet another possibility would be to follow the arguments in
the proof of Lemma  \ref{lem:O(1)}.

The last assertion follows again from Serre duality and the fact that $\ka_X(-(n+1-d))\boxtimes\ka_X$ is a Calabi--Yau category
of dimension $2\dim(\ka_X)=2\sigma/d$, cf.\ Corollary \ref{cor:KuzThm}.
\end{proof}

In fact, it seems likely that $\Hom(P_0,P_\ell)=0$ for all $\ell>\sigma$, which certainly is the case
if $d\mid \ell$. So presumably, $L(X)=\bigoplus_{\ell\geq0}\Hom(P_0,P_\ell)$, but this is of no importance for
what follows.

\begin{prop}\label{prop:JacHH2}
The ring homomorphism $\pi\colon J(X)\to L(X)$ in (\ref{eqn:RJL})
is injective.
%
\end{prop}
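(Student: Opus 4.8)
The plan is to leverage the fact that both $J(X)$ and $L(X)$ are finite-dimensional Gorenstein (in fact, one-dimensional in top degree) with the same socle degree $\sigma=(n+2)(d-2)$, together with the surjectivity of $\pi$ established implicitly via Lemma \ref{lem:O(1)} and the commutativity of $L(X)$. The key structural input is Proposition \ref{prop:summ}: the pairing $L_\ell(X)\times L_{\sigma-\ell}(X)\to L_\sigma(X)\cong k$ is non-degenerate, and the analogous statement for the Jacobian ring is the classical Macaulay/Griffiths duality, namely that $J(X)$ is an Artinian Gorenstein graded ring with socle in degree $\sigma$ (cf.\ Remark \ref{rem:noaccident} and \cite[Thm.\ 2.5]{Don}). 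First I would record that $\pi$ is surjective: by Lemma \ref{lem:O(1)} the map $R_1\to L_1(X)$ is surjective, and since $L(X)$ is generated in degree one over $L_0(X)\cong k$ --- which itself needs a brief justification, see below --- the ring map $R\to L(X)$, hence $J(X)\to L(X)$, is surjective.

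The main step is then a purely ring-theoretic argument. Suppose $\pi$ is not injective, so $I:=\ker(\pi)\subset J(X)$ is a nonzero graded ideal. Since $J(X)$ is Artinian Gorenstein with one-dimensional socle $J_\sigma(X)$, any nonzero ideal $I$ meets the socle: $I\cap J_\sigma(X)\ne 0$, hence $J_\sigma(X)\subset I$ because $J_\sigma(X)$ is one-dimensional. But then $\pi(J_\sigma(X))=0$, i.e.\ $L_\sigma(X)$ receives the zero map from the top degree of $J(X)$. On the other hand, surjectivity of $\pi$ forces $\pi(J_\sigma(X))=L_\sigma(X)$, and Proposition \ref{prop:summ}(ii) says $L_\sigma(X)\cong k\ne 0$ --- a contradiction. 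Hence $\ker(\pi)=0$ and $\pi$ is injective.

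The step I expect to require the most care is the claim that $L(X)$ is generated in degree one as a ring over $k$, which is what converts the degree-one surjectivity of Lemma \ref{lem:O(1)} into surjectivity of $\pi$ in all degrees. One cannot simply quote this; instead I would argue as follows. The composition $J(X)\mor[\pi]L(X)$ agrees in degree one with the surjection of Lemma \ref{lem:O(1)}, and $J(X)$ is generated in degree one (being a quotient of the polynomial ring $R$), so the image $\pi(J(X))$ is a graded subring of $L(X)$ containing $L_1(X)$ and equal to $L(X)$ in degrees $0$ and $1$. It then suffices to show $\pi(J(X))=L(X)$ in every degree, and here I would invoke the Gorenstein symmetry: the subring $\pi(J(X))\subseteq L(X)$ and the perfect pairing of Proposition \ref{prop:summ}(iii) together with the fact that $\pi(J(X))$ hits the socle $L_\sigma(X)$ (shown as in the previous paragraph, but now running the argument in $J(X)$: its socle is nonzero and maps to the one-dimensional $L_\sigma(X)$; if it mapped to zero, then by Gorenstein duality in $J(X)$ combined with the pairing, a degree count shows $\pi$ would annihilate everything, contradicting $\pi|_{L_1}$ surjective and $L_1\ne 0$ for, e.g., the cubic fourfold). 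A cleaner route, and the one I would ultimately write, is to combine the two Gorenstein structures directly: $\pi$ is a graded ring homomorphism between two Artinian Gorenstein rings of the same socle degree $\sigma$ which is surjective in degree one and an isomorphism onto the socle; a standard lemma (e.g.\ via the perfect pairings on both sides, or via Nakayama applied to $\mathrm{coker}(\pi)$) then yields that $\pi$ is an isomorphism, which in particular gives injectivity. The genuine obstacle is thus bookkeeping: making sure the Gorenstein property of $L(X)$ from Proposition \ref{prop:summ} is used with the correct degree conventions and that the degree-one generation is not circular with Lemma \ref{lem:O(1)}.
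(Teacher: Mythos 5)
Your reduction of injectivity to the single statement $\pi_\sigma\ne 0$ via the two Gorenstein pairings is exactly the first step of the paper's proof, and it is correct: since both $J(X)$ and $L(X)$ pair non-degenerately into their one-dimensional top pieces, $\ker(\pi)$ is a graded ideal that is nonzero if and only if it contains the socle $J_\sigma(X)$. However, the way you then establish $\pi_\sigma\ne 0$ rests on a false premise. You claim that $\pi$ is surjective, equivalently that $L(X)$ is generated in degree one over $L_0(X)\cong k$, so that degree-one surjectivity (Lemma \ref{lem:O(1)}) propagates to all degrees. This is precisely what fails: the image of $\pi$ is by definition the subalgebra $\HH^*(\ka_X,(1))\subset L(X)$ generated by $L_1(X)$, and Remark \ref{rem:HHnotL} records that this is in general a \emph{proper} subalgebra --- for a cubic fourfold, $\dim J_3(X)=\dim H^1(X,\kt_X)=20$ while $\dim L_3(X)=\dim\HH^2(\ka_X)=22$. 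Consequently neither the Nakayama argument on $\mathrm{coker}(\pi)$ nor the statement ``surjectivity forces $\pi(J_\sigma(X))=L_\sigma(X)$'' is available, and your fallback (``Gorenstein duality in $J(X)$ combined with the pairing shows $\pi$ would annihilate everything'') does not go through either: non-degeneracy of the pairing on $L(X)$ requires testing $\pi(x)$ against \emph{all} of $L_{\sigma-\ell}(X)$, not merely against the image of $J_{\sigma-\ell}(X)$, so $\pi_\sigma=0$ does not formally propagate to $\pi=0$.

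The missing content is a genuine non-vanishing statement, and the paper supplies it with Hodge-theoretic input that your proposal has no substitute for. One identifies $J_d(X)\cong H^1(X,\kt_X)$ and shows (via Lemma \ref{lem:alphabeta}) that $\pi_d$ is the composition $H^1(X,\kt_X)\hookrightarrow\HH^2(X)\to\HH^2(\ka_X)$; then, using the compatibility of the $\HH^*(\ka_X)$-module structure on $\HH_*(\ka_X)$ with that of $X$ and the isomorphisms $\HH_{2-\sigma/d}(X)\cong\HH_{2-\sigma/d}(\ka_X)$, the non-vanishing of $\pi_\sigma$ is reduced to Griffiths' theorem that the multiplication $H^1(X,\kt_X)^{\times\sigma/d-1}\times H\Omega^{2-\sigma/d}(X)\to H\Omega^{\sigma/d}(X)$ is non-trivial. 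For cubic fourfolds the paper argues more directly: $J_3(X)$ injects into the $22$-dimensional $\HH^2(\ka_X)$ with $20$-dimensional image, which exceeds the maximal dimension of an isotropic subspace for the pairing $\HH^2(\ka_X)\times\HH^2(\ka_X)\to\HH^4(\ka_X)\cong k$, so $\pi_6\ne 0$. Some argument of this kind is indispensable; the purely ring-theoretic bookkeeping you outline cannot produce it.
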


\begin{proof} For a proof of the injectivity in degree one see the proof of Lemma
\ref{lem:O(1)}. However, it can
also be seen as a consequence of the following arguments.

The graded ring homomorphism $\pi$  induces  commutative diagrams
$$\xymatrix@C=5pt{J_\ell\ar[d]_{\pi_\ell}&\times& J_{\sigma-\ell}\ar[d]_{\pi_{\sigma-\ell}}\ar[rr]&&J_\sigma\ar[d]^{\pi_\sigma}&\cong k\\
L_\ell&\times &L_{\sigma-\ell}\ar[rr]&&L_\sigma&\cong k,}$$
with both rows non-degenerate, as $J(X)$ and $L(X)$ are Gorenstein (cf.\ \cite[Thm.\ 2.5]{Don} and Proposition
\ref{prop:summ}, (iii)). Hence, the injectivity of $\pi$ 
is equivalent to the injectivity of $\pi_\sigma$ which in turn is equivalent to $\pi_\sigma\ne0$.

First we recall that $J_{d}(X)\cong H^1(X,\kt_X)$. Indeed, the normal bundle
bundle sequence $0\to \kt_X\to\kt_\PP|_X\to\ko_X(d)\to 0$ combined with the
restriction of the Euler sequence shows that 
$$J_{d}(X)\cong {\rm Coker}\left(\xymatrix{H^0(X,\ko_X(1)^{\oplus n+2})\ar[r]^-{(\partial_if)}& H^0(X,\ko_X(d))}\right)\cong H^1(X,\kt_X).$$
Moreover, the discussion in Sections \ref{sec:Pell} and \ref{sec:Atiyah} (cf.\ Lemma \ref{lem:alphabeta}) shows that
the map $$\xymatrix{H^1(X,\kt_X)\cong J_d(X)\ar[r]^-{\pi_d}& L_d(X)=\Hom(P_0,P_d)\cong\HH^2(\ka_X)}$$
can be described as the composition of the standard injection $H^1(X,\kt_X)\,\hookrightarrow \HH^2(X)$ with
the projection $ \HH^2(X)\to \HH^2(\ka_X)$, see \cite{KuzHH}. The latter is obtained by applying 
left projection: $j^*\colon \HH^2(X)= \Hom(\ko_\Delta,\ko_\Delta[2])\to\Hom(j^*\ko_\Delta,j^*\ko_\Delta[2])=\Hom(P_0,P_0[2])=\HH^2(\ka_X)$.
In particular, there exists a commutative diagram
\begin{equation}\label{eqn:JHHA}
\xymatrix{J_d^{\times\sigma/d}\ar[r]\ar[d]_{\pi_{d}^{\sigma/d}}& J_\sigma\ar[d]^{\pi_\sigma}\\
\HH^2(\ka_X)^{\times\sigma/d}\ar[r]&\HH^{2\sigma/d}(\ka_X).}
\end{equation}
On the other hand, for the Calabi--Yau category $\ka_X$ one knows by \cite[Prop.\ 5.3]{KuzCY} that
there are isomorphisms $\HH^k(\ka_X)\cong\HH_{k-\sigma/d}(\ka_X)$ compatible with the
multiplication on $\HH^*(\ka_X)$ and the $\HH^*(\ka_X)$-module structure
of $\HH_*(\ka_X)$. So, (\ref{eqn:JHHA})
can be completed by the commutative diagram
\begin{equation}\label{eqn:JHHA2}
\xymatrix{\HH^2(\ka_X)^{\times\sigma/d-1}\ar[d]^-\wr\ar@{}[r]|-{\times}&\HH^2(\ka_X)\ar[d]^-\wr\ar[r]&\HH^{2\sigma/d}(\ka_X)\ar[d]^-\wr\\
\HH^2(\ka_X)^{\times\sigma/d-1}\ar@{}[r]|-{\times}& \HH_{2-\sigma/d}(\ka_X)\ar[r]&\HH_{\sigma/d}(\ka_X)\\
\HH^2(X)^{\times \sigma/d-1}\ar[u]\ar@{}[r]|-{\times}&\HH_{2-\sigma/d}(X)\ar[r]\ar[u]^-\wr&\HH_{\sigma/d}(X)\ar[u]^-\wr,}
\end{equation}
where the lower vertical arrows between the Hochschild homology groups are indeed isomorphisms as long
as we assume $\sigma/d>2$, see \cite{KuzHH}.  To conclude, use the isomorphism between $(\HH^*(X),\HH_*(X))$
and $(HT^*(X)\coloneqq\bigoplus_{r+s=\ast} H^r(X\bigwedge^s\kt_X), H\Omega^*(X)\coloneqq\bigoplus_{q-p=\ast} H^{p,q}(X))$, see \cite{Cal},
and the fact that $H^1(X,\kt_X)^{\times\sigma/d-1}\times H\Omega^{2-\sigma/d}(X)\to H\Omega^{\sigma/d}(X)$ is non-trivial
due to a result of Griffiths, cf.\ \cite[Thm.\ 2.2]{Don}. Altogether, this proves $\pi_\sigma\ne0$.


Let us now come to the case $\sigma/d=2$, which is
the case of  cubic fourfolds and for which the proof is more direct. Here, we know that $$J_3(X)\cong H^1(X,\kt_X)\to \HH^2(\ka_X)=L_3(X)$$
is injective with image  a  subspace  of codimension two of the $22$-dimensional $\HH^2(\ka_X)$. If $\pi_6$ were trivial, then
the non-degenerate pairing $\HH^2(\ka_X)\times \HH^2(\ka_X)\to\HH^4(\ka_X)\cong k$
would be trivial on a  subspace whose dimension exceeds the maximal dimension of an isotropic
subspace. This is the contradiction which allows us to conclude that $\pi_6\ne0$ and
hence all $\pi_\ell$ are indeed injective.
\end{proof}

\subsection{} We now introduce the version of  Hochschild cohomology of $\ka_X$ of
a smooth hypersurface $X\subset\PP^{n+1}$ that is appropriate for our purpose.

\begin{definition}
The \emph{Hochschild cohomology} of the pair $(\ka_X,(1))$ is the graded subalgebra
 $$\HH^*(\ka_X,(1))\subset L(X)$$  generated by $L_1(X)$.
\end{definition}

The next result is Theorem \ref{thm:main1}.
\begin{cor}\label{cor:Thm1}
There exists a  surjection of graded rings
\begin{equation}
J(X)\twoheadrightarrow \HH^*(\ka_X,(1)),
\end{equation}
which is an isomorphism if $n+2$ divisible by $d<n+2$.
\end{cor}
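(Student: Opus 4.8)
The plan is to combine Proposition \ref{prop:JacHH} and Proposition \ref{prop:JacHH2} with the definition of $\HH^*(\ka_X,(1))$ as the subalgebra of $L(X)$ generated by $L_1(X)$. First I would set up the general surjection: by Proposition \ref{prop:JacHH}, valid whenever $d\leq(n+2)/2$ (in particular for $d<n+2$ with $d\mid(n+2)$), we have a graded ring homomorphism $\pi\colon J(X)\to L(X)$. Since $J(X)$ is generated in degree one (as a quotient of the polynomial ring $R$) and $\pi_1$ carries $J_1(X)$ into $L_1(X)$ — indeed $\pi$ is induced by (\ref{eqn:ringone}) which lands in $L_1(X)$ — the image $\pi(J(X))$ is precisely the subalgebra of $L(X)$ generated by $\pi(J_1(X))\subseteq L_1(X)$. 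By Lemma \ref{lem:O(1)} the map (\ref{eqn:ringone}) is \emph{surjective} onto $L_1(X)$, so $\pi(J_1(X))=L_1(X)$, and hence $\pi(J(X))=\HH^*(\ka_X,(1))$ by definition. This already gives the surjection $J(X)\twoheadrightarrow\HH^*(\ka_X,(1))$ claimed in the first half of the corollary.

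For the isomorphism statement, I now additionally assume $d\mid(n+2)$ with $d<n+2$, so that Proposition \ref{prop:JacHH2} applies and $\pi\colon J(X)\to L(X)$ is \emph{injective}. Then the corestriction $J(X)\to\HH^*(\ka_X,(1))=\im(\pi)$ is both injective (being the corestriction of the injective $\pi$) and surjective (by the previous paragraph), hence an isomorphism of graded rings. So the proof is essentially a two-line deduction: surjectivity from Proposition \ref{prop:JacHH} plus Lemma \ref{lem:O(1)}, and injectivity from Proposition \ref{prop:JacHH2}.

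The only point that needs a word of care — and which I'd expect to be the single genuine step rather than bookkeeping — is verifying that $\HH^*(\ka_X,(1))$, defined as the subalgebra \emph{generated by $L_1(X)$}, really does coincide with $\im(\pi)$ rather than merely containing it. This reduces to the assertion that $J(X)$ is generated as a $k$-algebra in degree one, which is immediate since $J(X)=R/(\partial_i f)$ is a quotient of a polynomial ring, together with the fact that $\pi$ is a graded ring homomorphism sending degree one to degree one. Combined with $\pi(J_1(X))=L_1(X)$ from Lemma \ref{lem:O(1)}, the image is generated by $L_1(X)$ and therefore equals $\HH^*(\ka_X,(1))$ on the nose. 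No further calculation is required.

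\begin{proof}
By Proposition \ref{prop:JacHH} (which applies since $d\leq(n+2)/2$) the homomorphism (\ref{eqn:ringhomo}) factors through a graded ring homomorphism $\pi\colon J(X)\to L(X)$. As $J(X)=R/(\partial_i f)$ is generated as a $k$-algebra by its degree one part $J_1(X)$, and $\pi$ is a graded ring homomorphism, the image $\pi(J(X))$ is the subalgebra of $L(X)$ generated by $\pi(J_1(X))$. But $\pi_1$ is, by construction, the map (\ref{eqn:ringone}), which is surjective onto $L_1(X)$ by Lemma \ref{lem:O(1)}; hence $\pi(J_1(X))=L_1(X)$ and therefore $\pi(J(X))=\HH^*(\ka_X,(1))$. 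This yields the desired surjection $J(X)\twoheadrightarrow\HH^*(\ka_X,(1))$.

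Now assume in addition $d\mid(n+2)$ with $d<n+2$. Then Proposition \ref{prop:JacHH2} shows that $\pi$ is injective, so its corestriction $J(X)\to\HH^*(\ka_X,(1))$ is injective as well, and being also surjective by the previous paragraph, it is an isomorphism of graded rings.
\end{proof}
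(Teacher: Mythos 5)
Your proposal is correct and follows essentially the same route as the paper: the surjection comes from Proposition \ref{prop:JacHH} together with the surjectivity of (\ref{eqn:ringone}) onto $L_1(X)$ (Lemma \ref{lem:O(1)}) and the fact that $J(X)$ is generated in degree one, and the isomorphism statement is exactly Proposition \ref{prop:JacHH2}. The paper states this in two lines; you have merely spelled out the (correct) bookkeeping that the image of $\pi$ equals the subalgebra generated by $L_1(X)$.
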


\begin{proof}
The first assertion follows from the definition of $\HH^*(\ka_X,(1))$, Lemma
\ref{eqn:ringone}, and Proposition \ref{prop:JacHH}. The second part is an immediate
consequence of Proposition \ref{prop:JacHH2}.
\end{proof}

\begin{remark}\label{rem:HHnotL}
Note that the natural gradings of $\HH^*(\ka_X)$ and $\HH^*(\ka_X,(1))$ are not compatible,
as for example $\HH^{2\ell}(\ka_X)$ is mapped into $L_{d\ell}(X)$. Also observe that the projections
$\pi_\ell\colon J_\ell(X)\to L_\ell(X)$ are  in general not surjective, i.e.\
$\HH^*(\ka_X,(1))\subset L(X)$ is a proper subalgebra. For example for the cubic fourfold
we have $\dim J_3(X)=\dim H^1(X,\kt_X)=20$, whereas $\dim L_3(X)=\dim \HH^2(\ka_X)=22$.
\end{remark}

\subsection{} Consider two smooth hypersurfaces $X,X'\subset\PP^{n+1}$ of degree $1<d\leq (n+2)/2$ 
and  their associated categories $\ka_X\subset \Db(X)$ and $\ka_{X'}\subset\Db(X')$. We denote
the degree shift functors by $(1)\colon\ka_X\congpf\ka_{X'}$ and $(1)'\colon\ka_{X'}\congpf\ka_{X'}$
and their natural kernels by $P_1\in\ka_{X}(-(n+1-d))\boxtimes\ka_X$ and $P'_1\in\ka_{X'}(-(n+1-d))\boxtimes\ka_{X'}$, see
Example \ref{exa:id}, (ii).

\begin{prop}
Under the above assumption, let $\Phi=\bar\Phi_P\colon\ka_X\congpf\ka_{X'}$ be a Fourier--Mukai equi\-valence.
Then an isomorphism of the natural Fourier--Mukai kernels of  $\Phi\circ (1)$ and $(1)'\circ\Phi$,
\begin{equation}\label{eqn:FMdeg}
P_1\circ P\cong P\circ P_1',
\end{equation}
induces an isomorphism of graded algebras 
$$\HH^*(\ka_X,(1))\cong\HH^*(\ka_{X'},(1)').$$
\end{prop}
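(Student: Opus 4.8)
The plan is to promote the kernel-level isomorphism \eqref{eqn:FMdeg} into an isomorphism between the graded rings $L(X)$ and $L(X')$, and then to check that this isomorphism carries $L_1(X)$ to $L_1(X')$, so that it restricts to an isomorphism between the subalgebras generated by these degree-one pieces. First I would define, for each $\ell\ge 0$, a map $L_\ell(X)=\Hom(P_0,P_\ell)\to\Hom(P_0',P_\ell')=L_\ell(X')$ by convolving with $P$ on one side and with a (chosen) quasi-inverse kernel on the other, using \eqref{eqn:FMdeg} to identify $P\circ P_\ell$ with $P_\ell'\circ P$ and hence to turn a morphism $P_0\to P_\ell$ into a morphism $P_0'\to P_\ell'$. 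Concretely: a class $u\colon P_0\to P_\ell$ convolves with $P$ to give $P\circ P_0\to P\circ P_\ell\cong P_\ell'\circ P$; since $\bar\Phi_P$ is an equivalence with $\bar\Phi_{P_0}\cong\mathrm{id}$ the kernel $P$ is invertible under convolution inside $\ka_X(-(n+1-d))\boxtimes\ka_{X'}$ (equivalently, on the level of Fourier--Mukai functors $\ka_X\to\ka_{X'}$), so $P_0'\circ P\cong P$ and $P\circ P_0\cong P$, and composing with an inverse of $P$ yields the desired $P_0'\to P_\ell'$.

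The key points to verify are then: (a) this assignment is a well-defined \emph{ring} homomorphism $L(X)\to L(X')$, i.e.\ it respects the convolution-composition product that defines the multiplication on $L(X)$ and $L(X')$; (b) it is bijective, with inverse constructed the same way from a quasi-inverse of $\Phi$ (together with the isomorphism of kernels obtained from \eqref{eqn:FMdeg} by conjugating); and (c) it sends $L_1(X)$ isomorphically onto $L_1(X')$ — this last point is immediate from the construction, since the map is graded (it sends $L_\ell$ to $L_\ell'$ for every $\ell$). Point (a) is essentially formal: the product on $L(X)$ is built from the convolution $P_\ell\circ P_{\ell'}\cong P_{\ell+\ell'}$ (with the truncation convention above degree $\sigma$), and since convolution with $P$ is functorial and \eqref{eqn:FMdeg} is compatible with iterating $P_1$ (one gets $P\circ P_\ell\cong P_\ell'\circ P$ for all $\ell$ by convolving \eqref{eqn:FMdeg} with itself, using associativity of convolution), the induced maps commute with the multiplication maps on the nose up to the canonical associativity and unit isomorphisms. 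Once $L(X)\cong L(X')$ as graded rings with $L_1(X)\mapsto L_1(X')$, the subalgebra generated by $L_1(X)$ maps onto the subalgebra generated by $L_1(X')$, i.e.\ $\HH^*(\ka_X,(1))\cong\HH^*(\ka_{X'},(1)')$, which is the claim.

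The main obstacle I anticipate is bookkeeping rather than conceptual: one must be careful that "the natural Fourier--Mukai kernels of $\Phi\circ(1)$ and $(1)'\circ\Phi$" really are $P_1\circ P$ and $P\circ P_1'$ inside the appropriate product categories (this uses that convolution of kernels in $\ka_X(-(n+1-d))\boxtimes(-)$ computes composition of the induced functors, as recorded in the Remark following Example \ref{exa:id}), and that the chosen quasi-inverse kernel of $P$ is indeed a two-sided convolution inverse modulo the projector $P_0$. One should also make sure the truncation at degree $\sigma$ is respected — but this is automatic because the isomorphism is graded and the truncation is imposed identically in each degree on both sides. A slightly delicate point is that \eqref{eqn:FMdeg} is only assumed as an isomorphism of \emph{the natural} kernels, not as a distinguished/coherent one; however, all we use is its existence together with the fact that convolving it repeatedly produces compatible isomorphisms $P\circ P_\ell\cong P_\ell'\circ P$, and any such choice yields a ring isomorphism on $L$, so no coherence beyond ordinary naturality of convolution is needed. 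With these checks in place the statement follows.
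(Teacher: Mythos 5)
Your proposal is essentially the paper's own argument: the paper also transports $\Hom(P_0,P_\ell)$ to $\Hom(P_0',P_\ell')$ by convolving with $P$ on one side and cancelling it on the other via the transposed (inverse) kernel, observes that the resulting isomorphisms $L_\ell(X)\cong L_\ell(X')$ are compatible with composition, and restricts to the subalgebras generated by $L_1$. One bookkeeping slip to fix: with the convention $\Phi_{P\circ Q}\cong\Phi_Q\circ\Phi_P$ the iterates of (\ref{eqn:FMdeg}) read $P_\ell\circ P\cong P\circ P_\ell'$ (the expressions $P\circ P_\ell$ and $P_\ell'\circ P$ do not even typecheck for $X\ne X'$), so one convolves $u\colon P_0\to P_\ell$ with $P$ on the \emph{right}, uses $P_0\circ P\cong P$ and $P_\ell\circ P\cong P\circ P_\ell'$, and then cancels $P$ on the left with the transpose kernel --- after which your argument goes through verbatim.
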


\begin{proof}
By definition of a Fourier--Mukai functor, 
$P\in\ka_{X}(-(n+1-d))\boxtimes\ka_{X'}$, see Remark \ref{rem:FMequiCY}.  Hence, both sides of (\ref{eqn:FMdeg})
are objects in $\ka_X(-(n+1-d))\boxtimes\ka_{X'}$. Moreover, successive convolution with $P_1$ from the left yields isomorphisms
$P_\ell\circ P\cong P\circ P_\ell'$ for all $\ell\geq0$. Alternatively, $P_\ell\circ P$ can be seen as the image 
of $P_\ell$ under the equivalence $(~~\circ P)\cong{\rm id}\boxtimes \Phi\colon\ka_X(-n+1-d))\boxtimes\ka_{X}\congpf \ka_X(-n+1-d))\boxtimes\ka_{X'}$
and $P\circ P_\ell'$ as the image of $P_\ell'$ under the equivalence
$(P\circ~~)\cong\Psi\boxtimes {\rm id}\colon\ka_{X'}(-(n+1-d))\boxtimes\ka_{X'}\congpf\ka_X(-(n+1-d))\boxtimes\ka_{X'}$.
Here, $\Psi\colon\ka_{X'}(-(n+1-d))\congpf\ka_X(-(n+1-d))$ is the Fourier--Mukai equivalence
with kernel in $\ka_{X'}\boxtimes\ka_X(-(n+1-d))$ given by  applying the transposition to $P$.
The arguments in the geometric case can be easily adapted to show that $\Psi$ is indeed an equivalence.

Hence, the equivalence $(\Psi\boxtimes {\rm id})^{-1}\circ ({\rm id}\boxtimes\Phi)\colon \ka_X(-(n+1-d))\boxtimes\ka_{X}\congpf \ka_{X'}(-(n+1-d))\boxtimes\ka_{X'}$ sends $P_\ell$ to $P_\ell'$ and, therefore, defines isomorphisms $L_\ell(X)\cong L_\ell(X')$, $\ell\geq0$, 
compatible with composition. Restricted to the sub-algebras generated by $L_1$, this yields the desired isomorphism
of graded algebras $\HH^*(\ka_X,(1))\congpf\HH^*(\ka_{X'},(1)')$.
\end{proof}

As it is expected that  Fourier--Mukai kernels $P\in \ka_{X}(-(n+1-d))\boxtimes\ka_{X'}$
of  Fourier--Mukai equivalences $\Phi=\bar\Phi_P\colon \ka_X\congpf\ka_{X'}$ are unique, an isomorphism
(\ref{eqn:FMdeg}) should exist whenever $\Phi\circ(1)\cong(1)'\circ\Phi$.
This is certainly the case when $\ka_X\cong\Db(S,\alpha)$ for a twisted K3 surface $(S,\alpha)$ due to \cite{CanSte,OrlovK3}.

\begin{cor}\label{cor:Jaciso} 
Let $X,X'\subset \PP^{n+2}$ be smooth hypersurfaces of degree  $1<d\leq(n+2)/2$ with $d\mid(n+2)$.
Then an isomorphism of the natural Fourier--Mukai kernels of  $\Phi\circ (1)$ and $(1)'\circ\Phi$,
\begin{equation}\label{eqn:FMdeg}
P_1\circ P\cong P\circ P_1',
\end{equation}
induces an isomorphism of graded algebras $J(X)\cong J(X')$
and, therefore, an isomorphism $$X\cong X'.$$
\end{cor}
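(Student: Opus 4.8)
The plan is to chain together the results already assembled in the paper. First I would invoke the preceding Proposition: the hypothesis (\ref{eqn:FMdeg}) says precisely that the natural Fourier--Mukai kernels of $\Phi\circ(1)$ and $(1)'\circ\Phi$ are isomorphic, so that Proposition yields an isomorphism of graded algebras $\HH^*(\ka_X,(1))\cong\HH^*(\ka_{X'},(1)')$. Since we are now under the additional numerical assumption $d\mid(n+2)$ with $d<n+2$ (note $1<d\leq(n+2)/2$ forces $d<n+2$), Corollary \ref{cor:Thm1} applies to \emph{both} $X$ and $X'$: each of the natural surjections $J(X)\twoheadrightarrow\HH^*(\ka_X,(1))$ and $J(X')\twoheadrightarrow\HH^*(\ka_{X'},(1)')$ is in fact an isomorphism of graded rings. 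Composing, $J(X)\cong\HH^*(\ka_X,(1))\cong\HH^*(\ka_{X'},(1)')\cong J(X')$ as graded $k$-algebras.

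Second, I would feed this graded ring isomorphism into a Mather--Yau type statement for projective hypersurfaces: the graded Jacobian ring $J(X)=k[x_0,\dots,x_{n+1}]/(\partial_i f)$ of a smooth hypersurface determines the hypersurface up to projective isomorphism. Concretely, a graded $k$-algebra isomorphism $J(X)\cong J(X')$ lifts to a linear change of coordinates on $\PP^{n+1}$ carrying the ideal $(\partial_i f)$ to $(\partial_i f')$, and since $f$ (resp.\ $f'$) is recovered from its Jacobian ideal via the socle/top-degree pairing up to scalar, this gives $X\cong X'$. This is the ``version of the Mather--Yau theorem'' alluded to in the introduction (in the spirit of Donagi's use of it), and I would cite it as such rather than reprove it.

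The main obstacle is really already absorbed into the hypothesis and the cited inputs, so at the level of this Corollary there is no new geometric difficulty — the statement is essentially a formal concatenation. The one point that deserves a line of care is making sure the grading conventions match up: by Remark \ref{rem:HHnotL} the grading on $\HH^*(\ka_X,(1))$ inherited from $L(X)$ is the one in which $L_1(X)$ sits in degree one, and this is exactly the grading in which Corollary \ref{cor:Thm1} identifies it with $J(X)$ with its standard grading $J_\ell(X)$ in degree $\ell$; so the composite isomorphism $J(X)\cong J(X')$ is grading-preserving for the standard gradings, which is what the Mather--Yau step requires. A secondary remark is that one should note the naturality of the surjections in Corollary \ref{cor:Thm1} is not actually needed here — we only use that for these particular $X,X'$ they happen to be isomorphisms — so no compatibility of the two identifications with a single ambient construction has to be checked. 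With these conventions pinned down, the proof is the two-line composition above followed by the citation of Mather--Yau.
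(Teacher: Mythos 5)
Your proposal is correct and follows the paper's proof essentially verbatim: the preceding Proposition plus Corollary \ref{cor:Thm1} (applied on both sides under the hypothesis $d\mid(n+2)$, $d<n+2$) give the graded isomorphism $J(X)\cong J(X')$, and the Mather--Yau theorem (cited in the paper as \cite[Prop.\ 1.1]{Don} or \cite[Lem.\ 18.31]{VoisinHodge}) then yields $X\cong X'$. The only caution is in your parenthetical gloss of Mather--Yau: the paper explicitly warns that the isomorphism $J(X)\cong J(X')$ need not lift directly to a coordinate change identifying the equations, so the recovery of $f$ from the Jacobian ideal is genuinely the content of the cited theorem rather than an immediate consequence of the lift — but since you defer to the citation anyway, this does not affect correctness.
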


\begin{proof}
The isomorphism between the Jacobian rings follows from the above proposition and Corollary \ref{cor:Thm1}. That
the isomorphism between the Jacobian ring implies  the existence of an isomorphism $X\cong X'$
is an immediate consequence of the Mather--Yau theorem, see \cite[Prop.\ 1.1]{Don} or \cite[Lem.\ 18.31]{VoisinHodge}.
It may be worth noting that the isomorphism $J(X)\cong J(X')$ itself may not lift directly to an automorphism of
$k[x_0,\ldots,x_{n+1}]$ identifying the equations of $X$ and $X'$.
\end{proof}

\section{Hodge theory}
The existence of a Hodge isometry $H^4(X,\ZZ)_{\rm pr}\cong H^4(X',\ZZ)_{\rm pr}$ is shown to
yield a Hodge isometry $\widetilde H(\ka_X,\ZZ)\cong\widetilde H(\ka_{X'},\ZZ)$ with additional properties.
It will subsequently be lifted to an equivalence $\ka_X\cong\ka_{X'}$ that on the level of Hochschild cohomology
yields an isomorphism between the Jacobian rings. From now on, we work over $\CC$.

\subsection{} Let us briefly recall the Hodge structure of weight two
$\widetilde H(\ka_X,\ZZ)$ of the K3 category $\ka_X\subset{\rm D}^{\rm b}(X)$ associated 
with a smooth cubic $X\subset\PP^5$ as defined by Addington and Thomas in  \cite{AT}. 
As a lattice, $\widetilde H(\ka_X,\ZZ)$ is the orthogonal
complement of $\langle[\ko_X],[\ko_X(1)],[\ko_X(2)]\rangle\subset K_{\rm top}(X)$ with the quadratic form given by
 the Mukai pairing. The Hodge structure is determined by $\widetilde H^{2,0}(\ka_X)$  defined as the pull-back of $H^{3,1}(X)$ via the Mukai vector.
Furthermore, there exists a natural primitive inclusion (see \cite[Prop.\ 2.3]{AT}): $$\iota_X\colon H^4(X,\ZZ)_{\rm pr}\hookrightarrow\widetilde H(\ka_X,\ZZ),$$ which is compatible up to sign with the intersection form on $H^4(X,\ZZ)_{\rm pr}$ and the Mukai pairing on $\widetilde H(\ka_X,\ZZ)$. Moreover, $\iota_X$ respects the Hodge structures (up to Tate twist), i.e.\ it  restricts to an isomorphism $H^{3,1}(X)\congpf\widetilde H^{2,0}(\ka_X,\ZZ)$. The ortho\-gonal complement of the inclusion is the primitive sublattice spanned by $\lambda_j=[i^*\ko(j)]$, $j=1,2$,
where as before,
 $i^*\colon{\rm D}^{\rm b}(X)\to\ka_X$ is the left adjoint of the natural
 inclusion $i_*\colon\ka_X\,\hookrightarrow{\rm D}^{\rm b}(X)$. The choice of the generators induces an isometry ${\rm Im}(\iota_X)^\perp\cong A_2$,
which we shall tacitly fix throughout. The induced inclusion
$$H^4(X,\ZZ)_{\rm pr}\oplus A_2\,\hookrightarrow \widetilde H(\ka_X,\ZZ)$$ is
of index three, cf.\ \cite[Sect.\ 14.0.2]{HuyK3}, and its quotient $H_X\coloneqq \widetilde H(\ka_X,\ZZ)/(H^4(X,\ZZ)_{\rm pr}\oplus A_2)$ can be viewed naturally as a subgroup of the discriminant group $$A_{H^4(X,\ZZ)_{\rm pr}\oplus A_2}\cong A_{H^4(X,\ZZ)_{\rm pr}}\oplus A_{A_2}\cong \ZZ/3\ZZ\oplus\ZZ/3\ZZ.$$ The two projections from $H_X$ yield an isomorphism 
\begin{equation}\label{eqn:gamma}
\gamma_X\colon A_{H^4(X,\ZZ)_{\rm pr}}\congpf A_{A_2}~ (\cong\ZZ/3\ZZ).
\end{equation}

Also note that $H^4(X,\ZZ)_{\rm pr}$ with the reversed sign has two positive directions which are naturally oriented by taking real
and imaginary part of any generator of $H^{3,1}(X)$ (or, equivalently, of $\widetilde H^{2,0}(\ka_X)$ via $\iota_X$). Clearly, by picking the base $\lambda_1,\lambda_2$ of the positive
definite orthogonal complement ${\rm Im}(\iota_X)^\perp\cong A_2$ one also gives this part a natural orientation. Put together,
the four positive directions of $\widetilde H(\ka_X,\ZZ)$ are endowed with a natural orientation.

The orthogonal group  of $A_2$ is known to be $\OO(A_2)\cong {\mathfrak S}_3\times\ZZ/2\ZZ$. Here,
the second factor acts by a global sign change, whereas the first one is the Weyl group acting by permutation
of the unit vectors $e_i$, where $A_2\,\hookrightarrow \RR^3$ via $\lambda_1\mapsto e_1-e_2$ and $\lambda_2\mapsto e_2-e_3$.
 Note that ${\mathfrak S}_3$  can also be described as the kernel of the restriction $\OO(A_2)\twoheadrightarrow\OO(A_{A_2})\cong\OO(\ZZ/3\ZZ)\cong\ZZ/2\ZZ$.
Moreover,  $g\in \OO(A_2)$ preserves the natural orientation of the lattice $A_2$ if and only
if it is contained in ${\mathfrak A}_3\times\ZZ/2\ZZ$, which still surjects onto $\OO(A_{A_2})$. 
A generator of ${\mathfrak A}_3$ is described by the cyclic permutation of $\lambda_1,\lambda_2,-\lambda_1-\lambda_2$,
cf.\ \cite[Rem.\ 2.1]{HuyCubic}.

\begin{remark}
The category $\ka_X$ is equipped with the natural auto-equivalence $(1)\colon\ka_X\congpf\ka_X$ which is  of Fourier--Mukai type, see 
Example \ref{exa:id}, (ii). The induced action 
\begin{equation}\label{eqn:degcoh}
(1)^H\colon\widetilde H(\ka_X,\ZZ)\congpf\widetilde H(\ka_X,\ZZ)
\end{equation}
is the identity on $H^4(X,\ZZ)_{\rm pr}$ and  cyclicly permutes $\lambda_1,\lambda_2,-\lambda_1-\lambda_2$
in the orthogonal complement $A_2$, cf. \cite[Prop.\ 3.12]{HuyCubic}.
\end{remark}
\subsection{} Any Hodge isometry $\varphi\colon H^4(X,\ZZ)\congpf H^4(X',\ZZ)$ mapping $h_X$ to $h_{X'}$ induces a Hodge iso\-metry
$\varphi\colon H^4(X,\ZZ)_{\rm pr}\congpf H^4(X',\ZZ)_{\rm pr}$. 
Conversely, adapting \cite[Cor.\ 1.5.2]{Nikulin} to the present context, one  shows that
any Hodge iso\-metry $H^4(X,\ZZ)_{\rm pr}\congpf H^4(X',\ZZ)_{\rm pr}$ extends to a Hodge isometry
$H^4(X,\ZZ)\congpf H^4(X',\ZZ)$ mapping $h_X$ to $\pm h_{X'}$. (Use the natural isomorphism between the discriminant groups
$A_{H^4(X,\ZZ)_{\rm pr}}\cong A_{A_2}\cong\ZZ/3\ZZ$.)

For the following, both primitive cohomologies are considered with their
natural inclusions $\iota_X\colon H^4(X,\ZZ)_{\rm pr}\hookrightarrow \widetilde H(\ka_X,\ZZ)$ and $\iota_{X'}\colon
H^4(X',\ZZ)_{\rm pr}\hookrightarrow \widetilde H(\ka_{X'},\ZZ)$,
respectively.

\begin{prop}\label{prop:extend}
Any Hodge isometry $\varphi\colon H^4(X,\ZZ)_{\rm pr}\congpf H^4(X',\ZZ)_{\rm pr}$
extends to an orientation preserving Hodge isometry
$\tilde\varphi \colon\widetilde H(\ka_X,\ZZ)\congpf \widetilde H(\ka_{X'},\ZZ)$ that commutes with $(1)^H$ in (\ref{eqn:degcoh}).
\end{prop}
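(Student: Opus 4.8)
The plan is to build $\tilde\varphi$ by gluing $\varphi$ on the primitive part with an orthogonal transformation of the $A_2$-summand, chosen so that the resulting map respects the index-three overlattice $\widetilde H(\ka_X,\ZZ)$, preserves the orientation, and commutes with $(1)^H$. First I would use the fact, recalled in the excerpt, that $\widetilde H(\ka_X,\ZZ)$ sits between $H^4(X,\ZZ)_{\rm pr}\oplus A_2$ and its dual with glue group $H_X$ recording the graph of the isomorphism $\gamma_X\colon A_{H^4(X,\ZZ)_{\rm pr}}\congpf A_{A_2}$, and likewise for $X'$. So a pair $(\varphi, g)$ with $\varphi\colon H^4(X,\ZZ)_{\rm pr}\congpf H^4(X',\ZZ)_{\rm pr}$ and $g\in\OO(A_2)$ extends to an isometry $\widetilde H(\ka_X,\ZZ)\congpf\widetilde H(\ka_{X'},\ZZ)$ precisely when the induced maps on discriminant groups are compatible with the glue, i.e.\ when $\bar g\circ\gamma_X = \gamma_{X'}\circ\bar\varphi$ in $\OO(A_{A_2})\cong\ZZ/2\ZZ$. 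Since $\bar\varphi$, $\gamma_X$, $\gamma_{X'}$ are all determined maps between groups of order three, this is a single condition on $\bar g\in\OO(A_{A_2})\cong\ZZ/2\ZZ$; and the restriction $\OO(A_2)\twoheadrightarrow\OO(A_{A_2})$ is surjective, so a suitable $g$ exists.

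Next I would impose the two extra constraints. For the commutation with $(1)^H$: by the Remark recalled above, $(1)^H$ is the identity on $H^4(X,\ZZ)_{\rm pr}$ and the cyclic permutation $\rho$ of $\lambda_1,\lambda_2,-\lambda_1-\lambda_2$ on the $A_2$-factor (and similarly $\rho'$ on $X'$). Since $\varphi$ already commutes with the identity, the condition $\tilde\varphi\circ(1)^H=(1)'^H\circ\tilde\varphi$ reduces to $g\circ\rho=\rho'\circ g$ as maps $A_2\to A_2$ under the fixed identifications; as $\rho,\rho'$ both generate the cyclic group ${\mathfrak A}_3$, this holds automatically for any $g$ in the normalizer of ${\mathfrak A}_3$ in $\OO(A_2)$, which is all of $\OO(A_2)\cong{\mathfrak S}_3\times\ZZ/2\ZZ$ — so commutation with $(1)^H$ is in fact free once one checks that $(1)^H$ extends the same $\rho$ on both sides, i.e.\ one just needs $g\rho g^{-1}=\rho'$, and since $\rho=\rho'$ after our fixed identification ${\rm Im}(\iota_X)^\perp\cong A_2\cong{\rm Im}(\iota_{X'})^\perp$ this is the statement that $g$ commutes with a fixed generator of ${\mathfrak A}_3$, which holds iff $g\in{\mathfrak A}_3\times\ZZ/2\ZZ$. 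For orientation preservation: the four positive directions of $\widetilde H(\ka_X,\ZZ)$ split as the two (sign-reversed) positive directions of $H^4(X,\ZZ)_{\rm pr}$, oriented via $\Re,\Im$ of a generator of $H^{3,1}(X)$, together with the two positive directions of $A_2$ with its standard orientation; since $\varphi$ is a Hodge isometry it sends $H^{3,1}(X)$ to $H^{3,1}(X')$ and hence preserves the first pair's orientation, so $\tilde\varphi$ is orientation preserving iff $g\in\OO(A_2)$ preserves the natural orientation of $A_2$, i.e.\ iff $g\in{\mathfrak A}_3\times\ZZ/2\ZZ$.

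The remaining point is to see that the discriminant-compatibility condition on $\bar g$ can be met \emph{within} ${\mathfrak A}_3\times\ZZ/2\ZZ$. This is where the structure of $\OO(A_2)$ is used: both ${\mathfrak A}_3\times\ZZ/2\ZZ$ and the full ${\mathfrak S}_3\times\ZZ/2\ZZ$ surject onto $\OO(A_{A_2})\cong\ZZ/2\ZZ$ — indeed the global sign change $-\id\in\ZZ/2\ZZ$ already maps onto the nontrivial element of $\OO(A_{A_2})$ and lies in ${\mathfrak A}_3\times\ZZ/2\ZZ$ — so whatever value $\bar g\in\OO(A_{A_2})$ is forced by the glue condition $\bar g=\gamma_{X'}\circ\bar\varphi\circ\gamma_X^{-1}$, it is realized by some $g\in{\mathfrak A}_3\times\ZZ/2\ZZ$. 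Choosing such a $g$ and extending $(\varphi,g)$ over the index-three overlattice yields the desired $\tilde\varphi$. The main obstacle, and the step deserving the most care, is the bookkeeping of the discriminant forms and the glue maps: one must verify that $H^4(X,\ZZ)_{\rm pr}\oplus A_2\hookrightarrow\widetilde H(\ka_X,\ZZ)$ is indeed described by the graph of $\gamma_X$ (as asserted via \cite[Sect.\ 14.0.2]{HuyK3}), compute the induced action of $\varphi$ on $A_{H^4(X,\ZZ)_{\rm pr}}\cong\ZZ/3\ZZ$ (it is $\pm\id$, the sign immaterial since we may compose with $-\id$ on $A_2$), and check the compatibility $\bar g\circ\gamma_X=\gamma_{X'}\circ\bar\varphi$ really is the only obstruction to extending — all of this being a routine but delicate application of Nikulin's gluing formalism \cite[Cor.\ 1.5.2]{Nikulin}.
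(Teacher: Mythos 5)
Your proposal is correct and follows essentially the same route as the paper: Nikulin's gluing criterion reduces the extension problem to finding $g\in\OO(A_2)$ lifting $\gamma_{X'}\circ\bar\varphi\circ\gamma_X^{-1}\in\OO(A_{A_2})$, and since ${\mathfrak A}_3\times\ZZ/2\ZZ$ still surjects onto $\OO(A_{A_2})$ one can choose $g$ orientation preserving, whereupon commutation with the cyclic permutation (hence with $(1)^H$) is automatic. The only addition you make is to spell out why membership in ${\mathfrak A}_3\times\ZZ/2\ZZ$ is exactly the centralizer/orientation condition, which the paper leaves implicit.
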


\begin{proof}
The key is Nikulin's classical result \cite[Cor.\ 1.5.2]{Nikulin}, see also \cite[Ch.\ 14]{HuyK3}, showing that
$\varphi$ can be extended to an isometry that restricts to a given $g\in\OO(A_2)$ between the orthogonal
complements (which we have identified with $A_2$) if and only if $\bar g\circ \gamma_X=\gamma_{X'}\circ \bar\varphi\colon A_{H^4(X,\ZZ)_{\rm pr}}\to A_{A_2}$. Here, $\bar g$ and $\bar\varphi$ are the induced maps between the discriminant groups and $\gamma_X,\gamma_{X'}$ are as
in (\ref{eqn:gamma}). 

Thus, any lift $g\in \OO(A_2)$
of $\gamma_{X'}\circ \bar\varphi\circ\gamma_X^{-1}\in \OO(A_{A_2})$ defines an extension of the Hodge isometry $\varphi$
to an isometry $\tilde\varphi\colonÊ\widetilde H(\ka_X,\ZZ)\congpf\widetilde H(\ka_{X'},\ZZ)$ which is then automatically compatible
with the Hodge structures.
Moreover, there always exists a lift $g\in{\mathfrak A}_3\times\ZZ/2\ZZ$ of $\gamma_{X'}\circ \bar\varphi\circ\gamma_X^{-1}$ such that $\tilde\varphi$ is not only
a Hodge isometry but also preserves the natural orientation of the four positive directions. Any such $g$ automatically commutes with the cyclic permutation of $\lambda_1,\lambda_2,-\lambda_1-\lambda_2$ and, therefore, the induced $\tilde\varphi$ commutes with $(1)^H$.
\end{proof}

\section{Deformation theory}
In this section we show how to combine Corollary \ref{cor:Jaciso} with results from \cite{HuyCubic} to prove the global Torelli theorem
first for general and then for all cubics.

\subsection{}
We  start by recalling \cite[Thm.\ 1.2]{HuyCubic}, which describes the group of auto-equivalences of $\ka_X$ for a very general cubic.
Here, a cubic $X\in|\ko(3)|$ is very general if it is contained in the complement of a countable union of proper closed subsets of $|\ko(3)|$.
The proof in \cite{HuyCubic} does not depend on the global Torelli theorem
for cubic fourfolds, but makes use of results on ${\rm Aut}({\rm D}^{b}(S,\alpha))$ for twisted K3 surfaces $(S,\alpha)$ without
spherical objects, see \cite{HMS2}.

\begin{prop}\label{prop:Autgen}
Let $X\subset\PP^5$ be a very general smooth cubic. Then the group $\Aut(\ka_X)$ of
Fourier--Mukai auto-equivalences $\Phi\colon\ka_X\congpf\ka_X$ (see Definition \ref{def:FM}) is an infinite cyclic group containing $\ZZ\cdot [1]$ as an index three subgroup.\qed 
\end{prop}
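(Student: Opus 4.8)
The plan is to determine $\Aut(\ka_X)$ by studying its action on the weight-two Hodge structure $\widetilde H(\ka_X,\ZZ)$ and identifying the kernel of that action.

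First I would record an explicit infinite cyclic subgroup. Both $(1)$ and $[1]$ are Fourier--Mukai auto-equivalences of $\ka_X$ (Example \ref{exa:id}, (ii)), and by Corollary \ref{cor:Pell} one has $(1)^3\cong(3)\cong[2]$. Since $[1]$ is central, the subgroup $\langle(1),[1]\rangle$ is the abelian group on generators $(1),[1]$ with the single relation $(1)^3=[1]^2$; as $\gcd(2,3)=1$ it is infinite cyclic, generated by $g\coloneqq[1]\circ(1)^{-1}$, with $g^3\cong[1]$ and $g^6\cong[2]$. Hence $\ZZ\cdot[1]=\langle g^3\rangle$ already has index $3$ in $\langle g\rangle$, and the whole statement reduces to proving the equality $\Aut(\ka_X)=\langle g\rangle$.

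Next I would analyse the homomorphism $\rho\colon\Aut(\ka_X)\to\OO^+(\widetilde H(\ka_X,\ZZ))$ to the group of orientation-preserving Hodge isometries, using the fact (the analogue for $\ka_X$ of the corresponding statement for derived categories of K3 surfaces) that a Fourier--Mukai auto-equivalence induces an orientation-preserving Hodge isometry. For $X$ very general, the algebraic classes in $\widetilde H(\ka_X,\ZZ)$ are exactly $A_2=\langle\lambda_1,\lambda_2\rangle$ and the transcendental Hodge structure $T(\ka_X)$, which rationally is $H^4(X,\QQ)_{\rm pr}$, has endomorphism field $\QQ$; therefore every Hodge isometry acts by $\pm\id$ on $T(\ka_X)$ and by an element of $\OO(A_2)$ on $A_2$. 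Combining the orientation condition (which forces the $A_2$-component into ${\rm SO}(A_2)$) with Nikulin's gluing criterion for the index-three overlattice $H^4(X,\ZZ)_{\rm pr}\oplus A_2\subset\widetilde H(\ka_X,\ZZ)$ --- the same bookkeeping as in the proof of Proposition \ref{prop:extend} --- I expect $\OO^+(\widetilde H(\ka_X,\ZZ))$ to come out cyclic of order $6$. Since $(1)^H$ has order $3$ (it cyclically permutes $\lambda_1,\lambda_2,-\lambda_1-\lambda_2$) and $[1]^H=-\id$ has order $2$, the class $\rho(g)$ is a generator, so $\rho$ is surjective onto $\ZZ/6\ZZ$.

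The remaining and hardest step is to show $\ker\rho=\ZZ\cdot[2]=\langle g^6\rangle$: the inclusion $\supseteq$ is immediate since $[2]$ acts trivially on cohomology, while the reverse inclusion requires genuine input. I would obtain it by degeneration: by Addington--Thomas \cite{AT} and the density statement recalled in the introduction, a very general $X$ can be joined by a family of cubics to cubics $X'$ with $\ka_{X'}\cong\Db(S',\alpha')$ for a twisted K3 surface $(S',\alpha')$ without spherical objects, and along such a family no spherical object in $\ka_X$ is created --- for very general $X$ this is automatic, as $\NS(\ka_X)\cong A_2$ contains no $(-2)$-class. Transporting an auto-equivalence of $\ka_X$ with trivial cohomological action to one of $\Db(S',\alpha')$ with the same property and then invoking \cite{HMS2} --- for a twisted K3 surface without spherical objects the kernel of the cohomological action on $\Aut(\Db(S',\alpha'))$ is generated by $[2]$ --- would yield $\ker\rho\subseteq\ZZ\cdot[2]$. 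Once this is established, $\langle g\rangle$ surjects onto $\ZZ/6\ZZ$ with kernel $\langle g^6\rangle=\ker\rho$, so $\Aut(\ka_X)=\langle g\rangle\cong\ZZ$ and $\ZZ\cdot[1]=\langle g^3\rangle$ has index exactly $3$, as claimed. The technical heart of the whole argument is the deformation step controlling the kernel; everything else is lattice theory and the known structure of $\Aut$ for twisted K3 categories.
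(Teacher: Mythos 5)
The paper does not actually prove this proposition: it is quoted verbatim from \cite[Thm.\ 1.2]{HuyCubic} (hence the \qed\ in the statement), and the surrounding text only records that the proof there avoids the global Torelli theorem but uses $\Aut(\Db(S,\alpha))$ for twisted K3 surfaces without spherical objects \cite{HMS2}. Your sketch reconstructs exactly that strategy, and your elementary steps check out: the relation $(1)^3\cong[2]$ does make $\langle (1),[1]\rangle$ infinite cyclic with generator $g=[1]\circ(1)^{-1}$ and $[1]=g^3$, and the Nikulin-type bookkeeping over the two copies of $\ZZ/3\ZZ$ does give a cyclic group of order six of orientation-preserving Hodge isometries generated by $\rho(g)$. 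The two inputs you defer --- that Fourier--Mukai auto-equivalences act by orientation-preserving Hodge isometries on $\widetilde H(\ka_X,\ZZ)$, and that the kernel of a cohomologically trivial auto-equivalence can be deformed to the dense locus where $\ka_{X'}\cong\Db(S',\alpha')$ with $(S',\alpha')$ spherical-object-free, so that \cite{HMS2} forces it to be an even shift --- are precisely the technical content of \cite{HuyCubic}; within a blind sketch they remain assertions, but the outline is the correct one.
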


The degree shift functor $(1)\in\Aut(\ka_X)$ is symplectic, i.e.\ it acts trivially on the transcendental part (which for
a general cubic is $H^4(X,\ZZ)_{\rm pr}\cong{\rm Im}(\iota_X)$), and generates the quotient $\Aut_s(\ka_X)/\ZZ\cdot[2]$.

\begin{cor}\label{cor:twistcom}
Let $X,X'\subset\PP^5$ be two very general cubics and let $\Phi=\bar\Phi_P\colon\ka_X\congpf\ka_{X'}$ be a Fourier--Mukai
equivalence for which the induced action $\Phi^H\colon \widetilde H(\ka_X,\ZZ)\congpf\widetilde H(\ka_{X'},\ZZ)$ commutes with the action
of the degree shift $(1)^H$ on $\widetilde H(\ka,\ZZ)$.
Then $\Phi$ commutes  with the degree shift functor on both sides, i.e.\ $\Phi\circ (1)\cong(1)'\circ\Phi$.
\end{cor}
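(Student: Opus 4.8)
The plan is to package the asserted commutativity into a single auto-equivalence and then identify it inside the (very small) group of auto-equivalences of $\ka_{X'}$. Concretely, set
$$\Psi\coloneqq (1)'^{-1}\circ\Phi\circ(1)\circ\Phi^{-1},$$
an auto-equivalence of $\ka_{X'}$, so that $\Phi\circ(1)\cong(1)'\circ\Phi$ is equivalent to $\Psi\cong\id$. Since $\Phi=\bar\Phi_P$ and the degree shifts $(1),(1)'$ are of Fourier--Mukai type (Example \ref{exa:id}, (ii)), and the inverse of a Fourier--Mukai equivalence is again of this type, $\Psi$ is a Fourier--Mukai auto-equivalence of $\ka_{X'}$ in the sense of Definition \ref{def:FM}. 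Hence Proposition \ref{prop:Autgen} applies to $\ka_{X'}$: the group $\Aut(\ka_{X'})$ is infinite cyclic, say generated by $g$ with $[1]\cong g^3$, and then necessarily $(1)'\cong g^2$, because $((1)')^3\cong[2]\cong[1]^2\cong g^6$ by Corollary \ref{cor:Pell}.

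First I would compute the action of $\Psi$ on the weight-two Hodge structure $\widetilde H(\ka_{X'},\ZZ)$. By functoriality of the induced cohomological action one has $\Psi^H=((1)'^H)^{-1}\circ\Phi^H\circ(1)^H\circ(\Phi^H)^{-1}$, and the hypothesis that $\Phi^H$ intertwines the two degree shifts, i.e.\ $\Phi^H\circ(1)^H\circ(\Phi^H)^{-1}=(1)'^H$, yields $\Psi^H=\id$ at once. On the other hand, the kernel of the representation $\Aut(\ka_{X'})=\langle g\rangle\to\OO(\widetilde H(\ka_{X'},\ZZ))$ is exactly $\langle[2]\rangle=\langle g^6\rangle$: it contains $[2]$ because shifts act trivially on $\widetilde H$; it does not contain $[1]\cong g^3$, whose action is $-\id$; and it does not contain $(1)'\cong g^2$, whose action cyclically permutes $\lambda_1,\lambda_2,-\lambda_1-\lambda_2$ (see $(\ref{eqn:degcoh})$) and is in particular non-trivial. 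Consequently $\Psi\cong[2k]$ for some $k\in\ZZ$, i.e.\ $\Phi\circ(1)\circ\Phi^{-1}\cong(1)'\circ[2k]$.

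The one step that requires an idea is ruling out $k\neq0$; the preceding steps are just bookkeeping with Proposition \ref{prop:Autgen} and the action of $(1)^H$. For this I would raise the last isomorphism to the third power. Since any triangulated equivalence commutes with the shift functor, the left-hand side becomes $\Phi\circ(1)^3\circ\Phi^{-1}\cong\Phi\circ[2]\circ\Phi^{-1}\cong[2]$, using $(1)^3\cong[2]$ from Corollary \ref{cor:Pell}; and since shifts are central, the right-hand side becomes $((1)')^3\circ[6k]\cong[2]\circ[6k]\cong[6k+2]$, again by Corollary \ref{cor:Pell} applied to $X'$. Hence $[6k]\cong\id$ on $\ka_{X'}$, which forces $k=0$, since a non-zero bounded complex is never isomorphic to a non-trivial shift of itself. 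Therefore $\Psi\cong\id$, that is $\Phi\circ(1)\cong(1)'\circ\Phi$, as claimed.
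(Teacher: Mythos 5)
Your proposal is correct and follows essentially the same route as the paper: form the auto-equivalence $\Psi$ measuring the failure of commutation, use Proposition \ref{prop:Autgen} together with the cohomological action to identify $\Psi$ as an even shift $[2k]$, and then cube the relation using $(1)^3\cong(1)'^3\cong[2]$ from Corollary \ref{cor:Pell} to force $k=0$. The only cosmetic difference is that you work with $\Psi\in\Aut(\ka_{X'})$ and spell out the identification of the kernel of $\Aut(\ka_{X'})\to\OO(\widetilde H(\ka_{X'},\ZZ))$ with $\ZZ\cdot[2]$, which the paper leaves implicit.
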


\begin{proof}
Indeed, under the assumption $\Psi\coloneqq\Phi^{-1}\circ(1)'\circ\Phi\circ(-1)\in\Aut(\ka_X)$ acts trivially on $\widetilde H^{1,1}(\ka_X,\ZZ)\cong A_2$
and hence $\Psi\cong [k]$. However, as $[2]\cong(3)$ on both sides, the relation $(1)'\circ\Phi\cong \Phi\circ(1)\circ[k]$ automatically implies
$k=0$. 
\end{proof}

If Fourier--Mukai kernels of Fourier--Mukai equivalences $\ka_X\congpf\ka_{X'}$ are unique, then $\Phi\circ(1)\cong(1)'\circ\Phi$ would immediately yield (\ref{eqn:FMdeg}) and hence $X\cong X'$ by Corollary \ref{cor:Jaciso}.

\begin{remark}
In fact, as was stated in \cite[Thm.\ 1.5]{HuyCubic}, two very general cubics $X$ and $X'$ are isomorphic if and only if there exists
a Fourier--Mukai equivalence $\ka_X\cong\ka_{X'}$. However, the proof given in \cite{HuyCubic} relies on
the global Torelli theorem. Indeed, if $X,X'$ are very general,
then $\widetilde H^{1,1}(\ka_X,\ZZ)\cong\widetilde H^{1,1}(\ka_{X'},\ZZ)$ is just an isometry of $A_2$
and the Hodge isometry of its orthogonal complement can therefore be read as a Hodge isometry
$H^4(X,\ZZ)_{\rm pr}\cong H^4(X',\ZZ)_{\rm pr}$.
\end{remark}

\begin{remark}\label{rem:commtwisted}
Note that all that was needed in the proof of Corollary \ref{cor:twistcom} was that the kernel
of $\Aut_s(\ka_X)\to\Aut(\widetilde H(\ka_X,\ZZ))$ is $\ZZ\cdot[2]$. This is true for very general
cubics, but also for cubics $X$ for which there exists a twisted K3 surface $(S,\alpha)$ with $\ka_X\cong\Db(S,\alpha)$ and
without any $(-2)$-classes in $N(S,\alpha)\cong \widetilde H^{1,1}(\ka_X,\ZZ)$, see \cite{HMS2}. Moreover, in the latter case $\Phi\circ(1)\cong (1)'\circ\Phi$ implies $P_\ell\circ P\cong P\circ P_\ell'$, $\ell\geq1$, due to the uniqueness of Fourier--Mukai kernel for the category of twisted coherent 
sheaves \cite{CanSte}.
\end{remark}

\subsection{Proof of Theorem \ref{thm:GT}} 
Let $\varphi\colon H^4(X,\ZZ)_{\rm pr}\congpf H^4(X',\ZZ)_{\rm pr}$ be a Hodge isometry. According to
Proposition \ref{prop:extend}, $\varphi$ can be extended to an orientation preserving Hodge isometry
$\tilde\varphi\colon \widetilde H(\ka_X,\ZZ)\congpf\widetilde H(\ka_{X'},\ZZ)$ that commutes with the action of the degree shift $(1)^H$
on the two sides, which corresponds to a cyclic permutation of $\lambda_1,\lambda_2,-\lambda_1-\lambda_2$.

Due to the local Torelli theorem, $\varphi$ can be naturally extended to Hodge isometries
\begin{equation}\label{eqn:parallel}
\varphi_t\colon H^4(X_t,\ZZ)_{\rm pr}\congpf H^4(X'_t,\ZZ)_{\rm pr}
\end{equation}
for all local deformations $X_t$ and $X'_t$. More precisely, there exists an identification
${\rm Def}(X)\cong {\rm Def}(X')$ between the bases of the universal deformation spaces (think of them as open sets
of the period domain) such that parallel transport
induces (\ref{eqn:parallel}). Simultaneously, the $\varphi_t$ can be extended to orientation
preserving Hodge isometries $\tilde\varphi_t\colon \widetilde H(\ka_{X_t},\ZZ)\congpf\widetilde H(\ka_{X'_t},\ZZ)$
commuting with the degree shift.

The set $D'$ of points $t\in{\rm Def}(X)\cong{\rm Def}(X')$ for which there exists a twisted K3 surface $(S_t,\alpha_t)$
without spherical objects and
orientation preserving Hodge isometries
\begin{equation}\label{eqn:cubicK3}
\tilde\varphi_t\colon\widetilde H(\ka_{X_t},\ZZ)\congpf \widetilde H(S_t,\alpha_t,\ZZ)\congpf \widetilde H(\ka_{X'_t},\ZZ)
\end{equation}
is dense (more precisely, a countable union of hypersurfaces, which using their Hodge theoretic description can be
seen to be also analytically dense), see \cite[Cor.\ 2.16]{HuyCubic}. Moreover, due to \cite[Thm.\ 1.4]{HuyCubic} which is the twisted version
of \cite[Thm.\ 1.1]{AT}, we may assume that  the Hodge isometries (\ref{eqn:cubicK3}) can be lifted to  Fourier--Mukai equivalences 
\begin{equation}
\Phi_t=\bar\Phi_{P_t}\colon\ka_{X_t}\congpf{\rm D}^{\rm b}(S_t.\alpha_t)\congpf\ka_{X'_t},
\end{equation} 
where $P_t\in\ka_{X_t}(-2)\boxtimes\ka_{X_t'}$.
Note that at this point one implicitly uses the derived global Torelli theorem for K3 surfaces, 
which ultimately relies on the classical global Torelli theorem for K3 surfaces.
For example, it is used in \cite[Prop.\ 5.1]{AT} and, more generally,
in the description of the image of ${\rm Aut}(\Db(S,\alpha))\to {\rm Aut}(\widetilde H(S,\alpha,\ZZ))$.

Remark \ref{rem:commtwisted} applies to $t\in D'$  and yields $P_{\ell t}\circ P_t\cong P_t\circ P'_{\ell t}$, where
$P_{\ell t}, P'_{\ell t}$ are the natural kernels for the degree shift functor $(\ell)$ on $\ka_{X_t}$ and $\ka_{X'_t}$, respectively.
Corollary \ref{cor:Jaciso} then yields $X_t\cong X'_t$ for all $t$ in the dense set $D'$  and, as the moduli space of cubics is separated, this shows that
$X\cong X'$. This concludes the proof of Theorem \ref{thm:GT}.\qed



\begin{remark}
Note that the arguments in \cite{AT,HuyCubic} rely on the fact that all cubics $X\in\kc_8$ contain a plane. This is a result due to Voisin, see
 \cite[Sect.\ 3]{VoisinGT}. It is used to prove the global Torelli theorem first for these cubics before extending it then to all.
\end{remark}


\section{Further comments}\label{sec:MF}

Although matrix factorizations have not been used in any of the proofs above, some of the arguments
are clearly motivated by thinking in terms of ${\rm MF}(f,\ZZ)$. We briefly recall the interpretation of $\ka_X$ as
the category of graded matrix factorizations and explain how to view some of the techniques in this light.

 The  category $\ka_X$ of a smooth hypersurface $X\subset\PP^{n+1}$ can also be described as a category
of graded matrix  factorizations. More precisely, there exists an exact linear equivalence
$${\rm MF}(f,\ZZ)\cong\ka_X,$$
where $f\in R_d= k[x_0,\ldots,x_{n+1}]_d$ is defining $X$.
In fact, Orlov constructs a series of fully faithful embeddings
$\Phi_i\colon{\rm MF}(f,\ZZ)\,\hookrightarrow\Db(X)$, see \cite[Thm.\ 2.5]{Orlov}, and for $i=1$ the image
is the subcategory $\ka_X=\langle\ko,\ldots,\ko(n+1-d)\rangle^\perp={}^\perp\langle\ko(d-n-2),\ldots,\ko(-1)\rangle$.
 
 The
objects of ${\rm MF}(f,\ZZ)$ are pairs $(K\stackrel{\alpha}{\to}L\stackrel{\beta}{\to} K(d))$,
where $K$ and $L$ are finitely generated, free, graded $R$-modules and $\alpha,\beta$ are
graded $R$-module homomorphisms with $\beta\circ\alpha=f\cdot{\rm id}=\alpha(d)\circ\beta$. 
Recall that $K(\ell)$ for a graded $R$-module $K=\bigoplus K_i$ is the graded module
with $K(\ell)_i=K_{\ell+i}$. Morphisms in ${\rm MF}(f,\ZZ)$ consist of homotopy classes of pairs of
graded homomorphisms $g\colon K\to K'$ and $h\colon L\to L'$  with $\alpha'\circ g=h\circ\alpha$ and
$\beta'\circ h= g(d)\circ\beta$ and the shift functor making ${\rm MF}(f,\ZZ)$ into a triangulated category
is given by $$(K\stackrel{\alpha}{\to}L\stackrel{\beta}{\to} K(d))[1]\coloneqq(L\stackrel{-\beta}{\to}K(d)\stackrel{-\alpha}{\to} L(d)).$$

The degree shift functor for ${\rm MF}(f,\ZZ)$ is by definition the auto-equivalence given by
\begin{eqnarray*}
(1)\colon {\rm MF}(f,\ZZ)&\congpf& {\rm MF}(f,\ZZ)\\
 (K\stackrel{\alpha}{\to}L\stackrel{\beta}{\to} K(d))&\mapsto&(K(1)\stackrel{\alpha(1)}{\to}L(1)\stackrel{\beta(1)}{\to} K(d+1)), 
 \end{eqnarray*}
which obviously satisfies $(d)\cong [2]$, cf.\ Corollary \ref{cor:Pell}. 

\begin{remark}\label{rem:einsgltwist}
According to \cite[Prop.\ 5.8]{BFK} under the fixed equivalence ${\rm MF}(f,\ZZ)\cong\ka_X$ (which corresponds
to $\Phi_1$ in \cite[Thm.\ 2.5]{Orlov}) the degree shift functor $(1)$  on ${\rm MF}(f,\ZZ)$
is isomorphic to the auto-equivalence $i^*\circ(\ko(1)\otimes(~))$ of $\ka_X$,
which is the degree shift functor $(1)$ on $\ka_X$ as introduced in Example \ref{exa:id}, (ii).

Of course, once this
is established, the isomorphism $(d)\cong[2]$ in Corollary \ref{cor:Pell} is immediate. However, only on the level of functors, but not on the level
of Fourier--Mukai kernels, which was crucial for our purposes.
\end{remark}

Consider the image of a section $s\in H^0(X,\ko_X(1))$  under the isomorphism (\ref{eqn:ringone}) and view it as
a morphism between Fourier--Mukai kernels $s\colon P_0=j^*\ko_\Delta\to P_1= j^*\ko_\Delta(1)$. The induced natural
transform $s\colon {\rm id}\to (1)$ between the auto-equivalences ${\rm id},(1)\colon\ka_X\cong {\rm MF}(f,\ZZ)\congpf\ka_X\cong{\rm MF}(f,\ZZ)$ can be described in terms of matrix factorizations as given by multiplication by
the section $s$, i.e.
\begin{equation}\label{eqn:sss}
(\xymatrix{K\ar[r]^-{\alpha}& L\ar[r]^-{\beta}& K(d)})\xymatrix{\ar[r]^{(s,s,s)}&} (\xymatrix{K(1)\ar[r]^-{\alpha(1)}& L(1)\ar[r]^-{\beta(1)}&K(d+1)}).
\end{equation} A similar description
holds for the transforms ${\rm id}\to(\ell)$ induced by sections $s\in H^0(X,\ko_X(\ell))$.

\begin{remark}
The naive idea behind Proposition \ref{prop:JacHH} is the chain rule. Indeed, for $s=\partial_if\in H^0(X,\ko_X(d-1))$
the map (\ref{eqn:sss})
is homotopic to zero; just use the chain rule $\partial_if=\partial_i (\beta\circ \alpha)=
\partial_i\alpha\circ\beta+\alpha\circ\partial_i\beta$ to see that $\partial_i\beta\colon L\to K(d-1)$
and $\partial_i\alpha\colon K\to L(-1)$ define a homotopy $\partial_if\sim 0$, see for example \cite{Dyck}.
However, Proposition  \ref{prop:JacHH} cannot be replaced by this easy observation, as the natural map 
$\Hom(P_0,P_\ell)\to {\rm Fun}({\rm id},(\ell))$
is not always injective.\footnote{Indeed, it can be shown that ${\rm Fun}({\rm id},[i])=0$ for the bounded derived category
$\Db(Y)$ of any smooth projective variety $Y$ and for $i>\dim (Y)$.
On the other hand, $\Hom(P_0,P_\sigma)\ne 0$. So, e.g.\ for cubic fourfolds with $\ka_X\cong\Db(S)$,
the map $\Hom(P_0,P_6)\to{\rm Fun}({\rm id},(6))\cong{\rm Fun}({\rm id},[4])=0$ is not injective.}
\end{remark}

It seems feasible that some of the arguments in this paper can indeed be rephrased in the language
of matrix factorizations. However, the transition between the two points of view is often involved,
cf.\ \cite{BFK}. Already identifying the two degree shift functors $(1)$ or verifying that $s\in H^0(X,\ko_X(1))$ really
yields
(\ref{eqn:sss}) are non-trivial matters.
In the end, we decided that
staying in the derived context throughout makes the arguments cleaner.

Passing from the formalism of matrix factorizations to the equivalent formalism of general factorizations
(see for instance \cite{ADS,BFK2}), we may represent the functors $(i)\colon {\rm MF}(f,\ZZ)\to {\rm MF}(f,\ZZ)$
by a natural Fourier--Mukai kernel $Q_i$ in (the appropriately graded version of) 
${\rm MF}(-f\boxplus  f)$. We may then define $L_{\rm MF}(X)\coloneqq \bigoplus \Hom(Q_0,Q_i)$.
A more general version of this ring is computed in \cite{BFK2}. Their methods can presumably
be used show that $J(X)$ is isomorphic to the subring of $L_{\rm MF}(X)$ generated by its degree one part.
It is natural to conjecture that $L_{\rm MF}(X ) \cong  L(X )$, but for the reasons listed, proving this is a non-trivial task.



\end{document}